\definecolor{green}{rgb}{0,0.6,0}
\definecolor{blue}{rgb}{0,0,1}
\theoremstyle{plain}
\newcommand{\set}[2]{\left\{{#1}\mid{#2}\right\}}      
\newtheorem{theorem}{Theorem}[section]
\newtheorem{thm}[theorem]{Theorem}
\newtheorem{neu}{}[section]
\newtheorem{Cor}[neu]{Corollary}
\newtheorem*{Cor*}{Corollary}
\newtheorem{Thm}[neu]{Theorem}
\newtheorem*{Thm*}{Theorem}
\newtheorem{Prop}[neu]{Proposition}
\newtheorem*{Prop*}{Proposition}
\theoremstyle{definition}
\newtheorem{Lemma}[neu]{Lemma}
\newtheorem*{Rmk*}{Remark}
\newtheorem{Rmk}[neu]{Remark}
\newtheorem{Ex}[neu]{Example}
\newtheorem*{Ex*}{Example}
\newtheorem*{Qu*}{Question}
\newtheorem{Def}[neu]{Definition}
\theoremstyle{remark}
\theoremstyle{definition}
\newcommand{\p}{\partial}
\newcommand{\into}{\hookrightarrow}
\newcommand{\pf}{\longrightarrow}
\newcommand{\N}{{\mathbb{N}}}
\newcommand{\Z}{{\mathbb{Z}}}
\newcommand{\R}{{\mathbb{R}}}
\newcommand{\T}{{\mathbb{T}}}
\newcommand{\A}{{\SSS}}
\newcommand{\SSS}{{\mathbb{S}}}
\renewcommand{\H}{\mathrm{H}}
\newcommand{\Crit}{{\rm Crit}}
\newcommand{\PS}{\mathrm{PS}}
\renewcommand{\H}{\mathcal{H}}
\newcommand{\MM}{\mathcal{M}}
\newcommand{\x}{\times}
\newcommand{\beq}{\begin{equation}}
\newcommand{\beqn}{\begin{equation}\nonumber}
\newcommand{\eeq}{\end{equation}}
\newcommand{\bea}{\begin{equation}\begin{aligned}}
\newcommand{\bean}{\begin{equation}\begin{aligned}\nonumber}
\newcommand{\eea}{\end{aligned}\end{equation}}
\renewcommand{\theenumi}{\roman{enumi}}
\renewcommand{\labelenumi}{(\theenumi)}
\begin{document}
\title{Lectures on the free period Lagrangian action functional}

\author{Alberto Abbondandolo}
\address{Ruhr Universit\"at Bochum,
Fakult\"at f\"ur Mathematik,
Geb\"aude NA 4/33,
D-44801 Bochum,
Germany}
\email{alberto.abbondandolo@rub.de}

\maketitle


\let\thefootnote\relax\footnote{The present work is part of the author's activities within CAST, a Research Network Program of the European Science Foundation.}

\centerline{\em To Kazimierz G\c{e}ba on the occasion of his 80th birthday}

\begin{abstract}
In this expository article we study the question of the existence of periodic orbits of prescribed energy for classical Hamiltonian systems on compact configuration spaces. We use a variational approach, by studying how the behavior of the free period Lagrangian action functional changes when the energy crosses certain values, known as the Ma\~n\'e critical values.
\end{abstract} 

\tableofcontents

\section*{Introduction}

The main topic of this expository article is the question of the existence of periodic orbits of prescribed energy for classical Hamiltonian systems on compact configuration spaces. 

More precisely, we consider a connected closed manifold $M$ and a smooth {\em Tonelli Lagrangian} $L$ on the tangent bundle $TM$ of $M$: The Tonelli assumption means that $L$ is fiberwise uniformly convex and superlinear. It is a very natural assumption: For instance, it guarantees that the Legendre transform is well defined and produces a diffeomorphism between the tangent and the cotangent bundle of $M$, it allows to prove that every pair of points in $M$ is connected by a curve $\gamma$ which minimizes the Lagrangian action 
\[
\int_{t_0}^{t_1} L(\gamma(t),\gamma'(t)) \, dt
\]
and is a solution of the Euler-Lagrange equation associated to $L$, which is unique whenever the two points are sufficiently close to each other (see e.g. \cite{bgh98} or \cite{maz11}). Typical examples of Tonelli Lagrangians are {\em electromagnetic Lagrangians}, that is functions of the form
\[
L(x,v) = \frac{1}{2} |v|_x^2 + \theta(x)[v] - V(x), \qquad \forall (x,v)\in TM,
\]
where $|\cdot|_x$ denotes the norm associated to a Riemannian metric on $M$ (the kinetic energy), $\theta$ is a smooth one-form (the magnetic potential) and $V$ is a smooth function (the scalar potential) on $M$. 

The Euler-Lagrange equations associated to $L$ induce a smooth flow on $TM$ which preserves the energy function $E:TM \rightarrow \R$,
\[
E(x,v) := d_v L(x,v)[v] - L(x,v), \qquad \forall (x,v)\in TM.
\]
Given a number $\kappa\in [\min E,+\infty)$, the problem under considerations is the existence of a periodic orbit on the energy level $E^{-1}(\kappa)$.

Such a problem has been studied by several authors, for several classes of Tonelli Lagrangians and energy ranges, and by several techniques. For instance, in the case of Lagrangians of the form 
\[
L(x,v) = \frac{1}{2} |v|_x^2 - V(x),
\]
one can use the Maupertuis-Jacobi metric as in \cite{ben84} and reduce the problem to the existence of closed Riemannian geodesics either on $M$, if $\kappa$ is larger than the maximum of $V$, or on the domain $\{V\leq \kappa\}\subset M$, which is endowed with a metric which degenerates on the boundary $V^{-1}(\kappa)$, if $\kappa$ is smaller than the maximum of $V$. For a general Tonelli Lagrangian, the role of the maximum of $V$ is played by the {\em Ma\~{n}\'e critical values}. More precisely, important values of the energy are the numbers
\[
\min E \leq e_0(L) \leq c_u(L) \leq c_0(L),
\]
where $e_0(L)$ is the maximal critical value of $E$, $c_u(L)$ is minus the infimum of the mean Lagrangian action
\[
\frac{1}{T} \int_0^T L(\gamma(t)\, \gamma'(t))\, dt
\]
over all contractible closed curves $\gamma$, and  $c_0(L)$ is minus the infimum of the mean Lagrangian action over all null-homologous closed curves. In the case of electromagnetic Lagrangians, $e_0(L)=c_u(L)=c_0(L)$ when the magnetic potential $\theta$ vanishes, but the first and the second values are in general distinct when $\theta$ does not vanish ($c_u(L)$ and $c_0(L)$ can be distinct only when the fundamental group of $M$ is sufficiently non-abelian). The importance of $e_0(L)$ is clear, since it marks a change in the topology of $E^{-1}(\kappa)$: If $\kappa>e_0(L)$, then $E^{-1}(\kappa)$ is diffeomorphic to the unit tangent bundle of $M$, if $\kappa<e_0(L)$ then the projection of $E^{-1}(\kappa)$ to $M$ is not surjective anymore. The {\em lowest} Ma\~{n}\'e critical value $c_u(L)$ affects directly the behavior of the   
{\em free period Lagrangian action functional}
\[
\SSS_{\kappa}(\gamma) := \int_0^T \Bigl( L\bigl(\gamma(t),\gamma'(t)\bigr) + \kappa \Bigr) \, dt, \qquad \gamma: \R/T\Z \rightarrow M.
\]
The critical points of this functional, whose domain is a suitable space of closed curves $\gamma$ in $M$ of arbitrary period $T$, are exactly the closed orbits of energy $\kappa$. The functional $\SSS_{\kappa}$ is bounded from below on every connected component of the free loop space whenever $\kappa\geq c_u(L)$, and it is unbounded from below on every such connected component when $\kappa< c_u(L)$.
The {\em strict} Ma\~{n}\'e critical value $c_0(L)$ is not directly related to the topology of $\SSS_{\kappa}$, but it has dynamical and geometric significance: For $\kappa>c_0(L)$ the energy surface $E^{-1}(\kappa)$ is of {\em restricted contact type}, and the Euler-Lagrangian flow on it is conjugated, up to a time reparametrization, to a {\em Finsler geodesic flow} on $M$, whereas both facts are in general false for $\kappa\leq c_0(L)$. Furthermore, the Ma\~{n}\'e critical values are related to compactness properties of the functional $\SSS_{\kappa}$, such as the {\em Palais-Smale condition}. 

By exploiting these facts, the free period action functional $\SSS_{\kappa}$ can be effectively used as a variational principle for our problem and allows to prove various results, which we summarize into the following theorem.

\begin{Thm*}
Let $L$ be a Tonelli Lagrangian on the tangent bundle of the closed manifold $M$.
\begin{enumerate}
\item If $\kappa>c_u(L)$ and $M$ is not simply connected, then the energy level $E^{-1}(\kappa)$ has a $\SSS_{\kappa}$-minimizing periodic orbit in each non-trivial homotopy class of the free loop space of $M$.
\item If $\kappa>c_u(L)$ and $M$ is simply connected, then the energy level $E^{-1}(\kappa)$ has a periodic orbit with positive  action $\SSS_{\kappa}$.
\item For almost every $\kappa\in (\min E,c_u(L))$  the energy level $E^{-1}(\kappa)$ has a periodic orbit with positive  action $\SSS_{\kappa}$.
\item If the energy level $E^{-1}(\kappa)$ is stable then $E^{-1}(\kappa)$ has a periodic orbit.
\end{enumerate}
\end{Thm*}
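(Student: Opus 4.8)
The plan is to prove the four assertions by three variational mechanisms of increasing delicacy, working throughout on the space of loops of arbitrary period modelled as $W^{1,2}(\R/\Z,M)\times(0,+\infty)$, on which the functional becomes $(\gamma,T)\mapsto T\int_0^1\bigl(L(\gamma,\gamma'/T)+\kappa\bigr)\,dt$. I will freely use the two features recalled above: $\SSS_\kappa$ is bounded below on every component of the free loop space once $\kappa\ge c_u(L)$, and it enjoys the Palais--Smale condition at positive levels once $\kappa>c_u(L)$ (at level $0$ compactness can fail because $T$ may tend to $0$).

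Assertion (i) is direct minimization. Fix a non-trivial free homotopy class $\alpha$ and a minimizing sequence $(\gamma_n,T_n)$ for $\SSS_\kappa$ on the corresponding component. Fiberwise superlinearity of $L$ and Cauchy--Schwarz bound the length of $\gamma_n$ by a constant times $\sqrt{T_n}$, so $T_n\to0$ would confine $\gamma_n$ to an arbitrarily small metric ball and hence make it contractible, which is excluded; and $T_n\to+\infty$ is ruled out by the coercivity in the period that the strict inequality $\kappa>c_u(L)$ provides. Thus the periods lie in a compact subinterval of $(0,+\infty)$, the Palais--Smale compactness applies, and a subsequence converges to a minimizer. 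This minimizer is a closed Euler--Lagrange orbit; because the period is free, the ``energy'' Lagrange multiplier vanishes and the orbit lies on $E^{-1}(\kappa)$; and its action is positive since $\alpha$ is non-trivial. This gives (i) but is empty when $M$ is simply connected, which motivates (ii).

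For (ii), minimization over the unique contractible component yields only the infimum $0$, approached by shrinking constant loops, so I would use a minimax based on the topology of the loop space. Since $\kappa>c_u(L)\ge e_0(L)$, a sublevel set $\{\SSS_\kappa<\varepsilon\}$ with $\varepsilon>0$ small is homotopy equivalent to the space of constant loops, a copy of $M$; on the other hand, when $M$ is a closed simply connected manifold other than a point the evaluation fibration $\Omega M\to\Lambda M\to M$ forces $H_*(\Lambda M)$ to contain classes not carried by constant cycles. Choosing such a class $h$ and setting $c(\kappa)=\inf_{\sigma\in h}\max_\sigma\SSS_\kappa$, one gets $\varepsilon\le c(\kappa)<+\infty$, and the Palais--Smale condition at the positive level $c(\kappa)$ produces a critical point, i.e. a periodic orbit on $E^{-1}(\kappa)$ of positive action. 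For (iii), where $\kappa<c_u(L)$, $\SSS_\kappa$ is unbounded below on every component and Palais--Smale may fail, so I would instead use the resulting mountain-pass geometry --- a positive barrier of short loops near a point where $\kappa+L(x,0)=\kappa-\min E>0$, separated from the region of very negative action --- together with Struwe's monotonicity trick: the associated minimax value $c(\kappa)>0$ is monotone in $\kappa$, hence differentiable for almost every $\kappa\in(\min E,c_u(L))$, and at a point of differentiability the derivative estimate yields a Palais--Smale sequence whose periods are a priori bounded above and below, so it subconverges to a periodic orbit on $E^{-1}(\kappa)$ of positive action.

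Assertion (iv) combines the above. If $\kappa>c_u(L)$ one is done by (i) or (ii), so assume $\kappa\le c_u(L)$ and rerun the mountain-pass construction of (iii). What was missing there was an a priori bound on the periods of the minimax Palais--Smale sequences at the specific value $\kappa$, available only for almost every $\kappa$ by the monotonicity argument; a stabilizing $1$-form on a neighborhood of $E^{-1}(\kappa)$ supplies exactly this, bounding the period of almost-critical loops that stay close to $E^{-1}(\kappa)$ in terms of their action. With this uniform bound the limiting procedure that in (iii) works only at points of differentiability of $c(\cdot)$ now works at $\kappa$ itself, producing a periodic orbit on $E^{-1}(\kappa)$. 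In all four parts the crux is this compactness question --- controlling the period $T$ of the relevant sequences: excluding $T\to0$ is elementary, while excluding $T\to+\infty$ is exactly where $c_u(L)$, the Struwe monotonicity, and the stability hypothesis do their work.
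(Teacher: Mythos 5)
Your treatments of (i) and (ii) follow the paper's route (direct minimization on a non-trivial component after excluding $T\to 0$ and $T\to\infty$; a Lusternik--Fet type minimax over a non-trivial class of the free loop space, with positivity of the minimax level coming from a length bound), and apart from glossing over the incompleteness of $\MM$ at $T=0$ these are acceptable. The genuine problems are in (iii) and (iv).

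In (iii) your mountain-pass geometry fails on the range $(\min E, e_0(L))$. You take as one ``valley'' the short loops near a point where $\kappa-\min E>0$ and as the other the region of very negative action; but for $\kappa<e_0(L)=\max_x E(x,0)$ there are points $y$ with $E(y,0)>\kappa$, and the constant loop at $y$ with period $T$ has action $T\bigl(\kappa-E(y,0)\bigr)<0$. A path can therefore slide through constant loops of tiny period from your starting point to such a $y$, keeping $|\SSS_\kappa|\leq CT$ arbitrarily small along the way, and then blow up the period: the minimax value of the class of paths from a constant loop to the negative-action region is $0$, not positive, and Struwe's trick then only produces Palais--Smale sequences at level $0$, which degenerate to constant loops with $T_h\to 0$. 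This is exactly why the paper restricts the one-parameter class $\mathcal{Z}_0$ to $\kappa\in(e_0(L),c_u(L))$ and, for the full range $(\min E,c_u(L))$, replaces it by the Taimanov--Bangert class $\mathcal{Z}_M$ of deformations of the \emph{whole} manifold of constant loops into the negative-action region; showing that $\mathcal{Z}_M$ is non-empty (by ``pulling one loop at a time'') and that its minimax value is positive is a substantial step your sketch omits. Even on $(e_0(L),c_u(L))$ the positivity of the barrier is not soft: it rests on the isoperimetric estimate $\bigl|\int_\T x^*\theta\bigr|\leq\Theta\,\ell(x)^2$ for $\theta=d_vL(\cdot,0)$, which you do not mention.

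In (iv) you assert that a stabilizing one-form ``bounds the period of almost-critical loops that stay close to $E^{-1}(\kappa)$ in terms of their action''. This is unproved and is precisely the hard point: for a merely stable hypersurface the form $\eta$ is not a primitive of $\omega|_{\Sigma}$, so the contact-type computation relating $\int\gamma^*\eta$ to the action and the period is unavailable, and Palais--Smale sequences for $\SSS_{\kappa}$ are close to the energy level only in the averaged sense $\frac1T\int_0^T(E-\kappa)\,dt\to 0$, not pointwise (Contreras's period bound of this kind assumes contact type, not stability). The paper avoids the issue entirely: stability yields a foliation by hypersurfaces $\Sigma_r$ with conjugated dynamics, realized as level sets of a new Tonelli Hamiltonian $K$; parts (i)--(iii) applied to $K$ give a periodic orbit on $\Sigma_r$ for almost every $r$, and the conjugacy transports one such orbit back to $\Sigma$. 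You should either supply the period estimate you claim or switch to this soft conjugation argument.
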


Notice that in (iii) only existence for {\em almost every} energy level in the interval $(\min E,c_u(L))$ (in the sense of Lebesgue measure) is stated: existence for {\em all} energy levels in this range is still unknown, although no counterexamples have been found so far. This issue is related to the fact that the Palais-Smale condition does not hold anymore below $c_u(L)$. The {\em stability condition} which is assumed in (iv) is a weaker form of the contact type condition.

The above theorem was first proved in this form by G.~Contreras \cite{con06} (assuming contact type instead of stable in (iv)), building on previous geometric ideas of I.~A.~Taimanov \cite{tai83,tai92b}. Contreras' long paper \cite{con06} contains many other beautiful results, such as the study of the invariant probability measures which one obtains as limits of Palais-Smale sequences which do not converge in the free loop space. This article is meant to be a gentle introduction, including some technical simplifications, to the part of \cite{con06} which concerns periodic orbits.

Unlike in a typical survey article, we are more concerned with detailed proofs, which we try to make accessible to a large audience including students, than with a systematic overview of the literature, for which we refer to the beautiful survey of Taimanov \cite{tai92b}, to \cite{cmp04} and to the already cited \cite{con06}. In particular, we start by proving well known abstract results, such as the mountain pass theorem, the general minimax principle, and the construction of the structure of infinite dimensional Hilbert manifold on the space of closed loops on $M$ of Sobolev class $W^{1,2}$ (Sections \ref{mpsec} and \ref{hmlsec}). In Section \ref{sec3} we introduce the already mentioned free period action functional $\SSS_{\kappa}$, which plays a fundamental role in this article and gives it its title: Unlike in \cite{con06}, we use it also to get existence of closed orbits for energies below $e_0(L)$. The Ma\~{n}\'e critical values which are relavant for this article are introduced in Section \ref{mcv}, together with some of their characterizations and with the discussion of two geometric properties of an energy level, namely the contact type and the stability condition. The analysis of Palais-Smale sequences is carried out in Section \ref{pss}, and in Section \ref{pohe} we prove statements (i) and (ii) of the above theorem. The topology of the free period action functional for $\kappa<c_u(L)$ is studied in  Section \ref{tfpafle}, and in Section \ref{pole} we finally prove statements (iii) and (iv), using {\em Struwe's monotonicity argument}, together with a weaker version of (iii), using an alternative argument.

\medskip

\paragraph{\bf Acknowledgments} This expository article is the outcome of two series of lectures that the author gave at two summer schools at the Korea Institute for Advanced Study of Seul in 2010 and at the Universit\'e de Neuch\^atel in 2011, respectively. I am grateful to Urs Frauenfelder and Felix Schlenk for organizing these two events, and to Gabriel Paternain, who was also a speaker at the first school, for many fruitful discussions. I would like to thank Jungsoo Kang, who participated to the first school, organized the material and typed a first version of the notes which eventually became this article. I would like to thank also Luca Asselle, who participated to the second school and suggested Lemma \ref{PST0} below, which allows to avoid extra technicalities and the detailed analysis of Palais-Smale sequences with infinitesimal periods which was present in the previous notes. 

\numberwithin{equation}{section}

\section{The minimax principle}
\label{mpsec}

\paragraph{\bf The mountain pass theorem} Let $H$ be a real Hilbert space and let $f$ be a continuously differentiable real function on $H$. The symbol $\Crit f$ denotes the set of critical points of $f$.
We assume that a certain open sublevel $\{f<a\}$ is
not connected, say $\{f<a\}=A\cup B$, with $A$ and $B$ disjoint non-empty open sets. We may think of $A$ and $B$ as two valleys, and consider the set of paths going from one valley to the other one, that is the set
$$
\Gamma:=\{\textrm{curves in $H$ with one end in $A$ and the other in $B$}\}.
$$
We can define the minimax value of $f$ on $\Gamma$ as
$$
c:=\inf_{\gamma\in\Gamma}\max_{x\in\gamma} f(x),
$$
and we notice that $a\leq c < +\infty$, because $\Gamma$ is non empty and each of its elements intersects the set $H\setminus (A\cup B) = \{f\geq a\}$.
One would expect this mountain pass level $c$ to be a critical value of $f$.
The next simple example shows that this is not always the case.

\begin{Ex}
Consider the smooth function $f$ on $\R^2$ defined by
\[
f(x,y)=e^x-y^2.
\]
Then $\{f<0\}$ has two connected components, $c=0$, but $f$ has no critical points. The problem here is that the critical point is pushed to infinity: Indeed, $f(-n,0)=e^{-n}$ converges to the mountain pass level $c=0$ and $df(-n,0)=e^{-n} dx$ tends to zero.
\end{Ex}

This example suggests the following definition.

\begin{Def}
A sequence $(x_n)_{n\in\N}\subset H$ is called a {\em Palais-Smale sequence} at level $c$ ($(\PS)_c$ for short) if
$$
\lim_{n\to\infty}f(x_n)=c\quad\mbox{and}\quad \lim_{n\to\infty}df(x_n)=0.
$$
The function $f$ is said to satisfy $(\PS)_c$ if all $(\PS)_c$ sequences are compact. It is said to satisfy $(\PS)$ if it satisfies $(\PS)_c$ for every $c\in \R$.
\end{Def}

Notice that limiting points of $(\PS)_c$ sequences are critical points at level $c$. We can now state the celebrated mountain pass theorem of Ambrosetti and Rabinowitz \cite{ar73} in the following form:

\begin{Thm}[Mountain Pass Theorem]
\label{mp}
Let $f\in C^{1,1}(H)$ be such that  $\{f<a\}$ is not connected and let $c$ be defined as above. Then $f$ admits a $(\PS)_c$ sequence. In particular, if $f$ satisfies $(\PS)_c$, then $c$ is a critical value.
\end{Thm}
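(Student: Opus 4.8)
The plan is to argue by contradiction: assume that $f$ admits no $(\PS)_c$ sequence, and derive a contradiction with the fact that $\Gamma$ is non-empty and every curve in $\Gamma$ meets $\{f\ge a\}$. The failure of the $(\PS)_c$ condition in this strong sense means that there exist $\varepsilon_0>0$ and $\delta>0$ such that
\[
\|df(x)\| \ge \delta \qquad \text{whenever } |f(x)-c| \le \varepsilon_0.
\]
Using the hypothesis $f\in C^{1,1}(H)$, so that $df$ is locally Lipschitz, I would then construct a negative pseudo-gradient flow. Concretely, take a locally Lipschitz vector field $X$ on the strip $\{|f-c|\le \varepsilon_0\}$ with $\|X(x)\|\le 1$ and $df(x)[X(x)] \le -\tfrac{\delta}{2}$ there (for instance, a suitably truncated and smoothed version of $-df(x)/\|df(x)\|$), cut it off to vanish outside $\{|f-c|\le \varepsilon_0\}$, and let $\Phi_t$ be its flow, which is globally defined since $X$ is bounded with bounded Lipschitz constant on the relevant region.

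The key deformation step is then: for $\varepsilon := \min\{\varepsilon_0, \tfrac{\delta\varepsilon_0}{2}\}$ and $T := \tfrac{4\varepsilon}{\delta}$ (or similar explicit choices), the time-$T$ map $\Phi_T$ pushes the sublevel set $\{f\le c+\varepsilon\}$ into $\{f\le c-\varepsilon\}$. This is because along a flow line, as long as it stays in the strip $|f-c|\le\varepsilon_0$, the value of $f$ decreases at rate at least $\delta/2$; an elementary computation with $\frac{d}{dt} f(\Phi_t(x)) = df(\Phi_t(x))[X(\Phi_t(x))] \le -\delta/2$ shows the value must drop below $c-\varepsilon$ within time $T$, and once it has dropped below $c-\varepsilon_0$ it can never come back up (since $X$ vanishes outside the strip, $f$ is non-increasing along the whole flow). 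I would also note that $\Phi_T$ is a homeomorphism of $H$ and is the identity on $\{f \le c - \varepsilon_0\}$, in particular on $A$ and on $B$ provided $a \le c - \varepsilon_0$; if $a > c - \varepsilon_0$ one shrinks $\varepsilon_0$ at the outset so that $\varepsilon_0 \le c - a$ is not forced — instead one simply observes $A,B\subset\{f<a\}\subset\{f<c+\varepsilon\}$ needs no fixing, and what matters is only that $\Phi_T$ maps curves to curves with endpoints still in $A\cup B$, which holds because endpoints lie in $\{f<a\}$ and there $\Phi_T$ may move points but stays within $\{f<a\}\subset\{f\le c-\varepsilon\}$ once $\varepsilon\le (a-\text{value})$...

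To avoid this bookkeeping subtlety cleanly, I would instead fix $\varepsilon_0$ small enough that $c - \varepsilon_0 > a - \varepsilon_0$ is automatic and also that $\{|f-c|\le\varepsilon_0\}$ is disjoint from $\{f<a\}$ whenever $c>a$; in the remaining case $c=a$, the set $\{f<a\}=\{f<c\}$ is disjoint from the strip as well. Hence $\Phi_T$ is the identity on $A\cup B$. Now pick $\gamma\in\Gamma$ with $\max_\gamma f < c+\varepsilon$, which exists by definition of the infimum $c$. Then $\Phi_T\circ\gamma$ is again a curve with one endpoint in $A$ and the other in $B$, so $\Phi_T\circ\gamma\in\Gamma$, yet $\max_{\Phi_T\circ\gamma} f \le c-\varepsilon < c$, contradicting the definition of $c$ as the infimum over $\Gamma$. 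This contradiction shows $f$ admits a $(\PS)_c$ sequence. The final sentence of the statement is then immediate: if $f$ satisfies $(\PS)_c$, a $(\PS)_c$ sequence has a convergent subsequence whose limit $x$ satisfies $f(x)=c$ and $df(x)=0$ by continuity, so $c\in f(\Crit f)$.

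The main obstacle I anticipate is not conceptual but a matter of getting the quantitative deformation lemma exactly right — choosing the constants $\varepsilon_0,\varepsilon,\delta,T$ consistently so that the flow both decreases $f$ by the required amount and never re-enters the dangerous strip from below, while simultaneously being the identity near the two valleys $A$ and $B$. The $C^{1,1}$ hypothesis is exactly what makes the pseudo-gradient vector field locally Lipschitz and hence gives a genuine flow without appealing to the general (partition-of-unity) pseudo-gradient construction; I would use this to keep the argument self-contained, as the surrounding text seems to intend.
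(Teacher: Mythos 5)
Your overall strategy is the same as the paper's: assume there is no $(\PS)_c$ sequence, so that $\|df\|$ is bounded below by some $\delta>0$ on a strip around the level $c$, deform a nearly optimal curve $\gamma\in\Gamma$ by a negative-gradient-type flow until it lies in $\{f\le c-\varepsilon\}$, and contradict the definition of $c$. The quantitative core (drop of $f$ at a definite rate while in the strip, hence below $c-\varepsilon$ in finite time, and no return since $f$ is non-increasing along the flow) is correct.

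One step, however, is genuinely wrong as written: your justification that $\Phi_T\circ\gamma$ still belongs to $\Gamma$. You argue that $\Phi_T$ is the identity on $A\cup B=\{f<a\}$ because this set is disjoint from the strip $\{|f-c|\le\varepsilon_0\}$, and you assert this also when $c=a$. That assertion is false: if $c=a$, then $\{f<a\}=\{f<c\}$ contains points with $f$-value in $[c-\varepsilon_0,c)$, which do lie in the strip, so the cut-off vector field does not vanish there and $\Phi_T$ moves points of $A\cup B$. The case $c=a$ is not vacuous --- it occurs in the paper's own example $f(x,y)=e^x-y^2$ with $a=0$. The correct argument is the one you begin and then abandon: since $df[X]\le 0$ everywhere, the flow maps $\{f<a\}=A\cup B$ into itself; since each orbit $t\mapsto\Phi_t(x)$ is connected and $A$, $B$ are disjoint open sets, an orbit starting in $A$ stays in $A$ (likewise for $B$). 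Hence the endpoints of $\Phi_T\circ\gamma$ remain in $A$ and $B$ even though they may move, and $\Phi_T\circ\gamma\in\Gamma$; this is exactly how the paper concludes (``the fact that $f$ decreases along the orbits of $\phi$ implies that $\tilde\gamma$ belongs to $\Gamma$''). A second, minor inconsistency: a continuous vector field cannot both vanish outside $\{|f-c|\le\varepsilon_0\}$ and satisfy $df[X]\le-\delta/2$ on all of that closed strip; use two nested strips (cut-off equal to $1$ on $\{|f-c|\le\varepsilon\}$ with $\varepsilon<\varepsilon_0$, supported in the larger strip), which is all your computation actually uses. With these two repairs your proof is complete and essentially coincides with the paper's; the only stylistic difference is that the paper flows along $-\nabla f$ itself (rescaled to make it complete) and does not truncate outside a strip, so the endpoint issue is handled there purely by monotonicity and connectedness.
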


Here $C^{1,1}$ denotes the set of functions whose differential is locally Lipschitz-continuous.

\begin{proof}
By contradiction, suppose that there exists $\epsilon>0$ such that $||df|| \geq\epsilon$ on the set $\{|f-c|\leq\epsilon\}$. We denote by $\nabla f$ the gradient of $f$ and we assume for sake of simplicity that the locally Lipschitz vector field $-\nabla f$ is positively complete, meaning that its flow $\phi$, that is the solution of
\[
\left\{
\begin{aligned}
&\frac{\p}{\p t}\phi_t (u)=-\nabla f\bigl(\phi_t(u)\bigr),\\[1ex]
&\phi_0 (u)=u,
\end{aligned}
\;\;\right.
\]
is defined for every $t\geq 0$ and every $u\in H$. This holds, for instance, if $\nabla f$ is globally Lipschitz (in this case the flow of $-\nabla f$ is defined on the whole $\R\times H$). See Remark \ref{noncomp} below for a hint on how to remove this extra assumption. Notice that
\begin{equation}
\label{decr}
\frac{d}{dt} f\bigl(\phi_t(u)\bigr) = df\bigl( \phi_t(u) \bigr) \bigl[-\nabla f\bigl(\phi_t(u)\bigr) \bigr] = - \bigl\|df\bigl(\phi_t(u)\bigr)\bigr\|^2,
\end{equation}
so the function $t\mapsto f(\phi_t(u))$ is decreasing. If $|f(\phi_t(u))-c|\leq\epsilon$ for all $t\in[0,T]$, we have
\[
2\epsilon \geq f(u)-f(\phi_T(u)) =-\int_0^T\frac{d}{dt}f(\phi_t(u))dt
=\int_0^T \bigl\|d f\bigl(\phi_t(u)\bigr)\bigr\|^2dt
\geq\epsilon^2T,
\]
from which we conclude that $T\leq 2/\epsilon$. Choose $\gamma\in\Gamma$ such that $\max_\gamma f\leq c+\epsilon$ and set
$$
\tilde\gamma=\phi_T(\gamma), \qquad\textrm{for some } T>\frac{2}{\epsilon}.
$$
The fact that $f$ decreases along the orbits of $\phi$ implies that $\tilde\gamma$ belongs to $\Gamma$. Since $f\leq c+\epsilon$ on $\gamma$, any $x\in\gamma$ satisfies either (i) $|f(x)-c|\leq\epsilon$ or (ii) $f(x)<c-\epsilon$. Let $x\in \gamma$.
If (i) holds, then $f(\phi_T(x))<c-\epsilon$ because $T>2/\epsilon$. If (ii) holds, then $f(\phi_T(x))<c-\epsilon$ because $f$ decreases along the orbits of $\phi$. Therefore we conclude that $\tilde\gamma\subset\{f< c-\epsilon\}$, which contradicts the definition of $c$.
\end{proof}

\begin{Rmk}
\label{noncomp}
If the vector field $-\nabla f$ is not positively complete, we can replace it by the complete one
 $-\nabla f/\sqrt{||\nabla f||^2+1}$. The above proof goes through with minor adjustments.
\end{Rmk}

\begin{Rmk}
The mountain pass theorem holds also for $f\in C^{1,1}(\MM)$ where $(\MM,g)$ is a Hilbert manifold equipped with a complete Riemannian metric $g$.
In this case, $(x_n)_{n\in\N}\subset\MM$ is said to be a $(\PS)_c$ sequence if $\lim_{n\to\infty}f(x_n)=c$ and $\lim_{n\to\infty}||df(x_n)||=0$, where $\|\cdot\|$ denotes the dual norm induced by $g$. Notice that the (PS) condition and the completeness of $g$ are somehow antagonist requirements: One may always achieve the completeness of an arbitrary Riemannian metric $g$ by multiplying it by a positive function which diverges at infinity (such an operation reduces the set of the Cauchy sequences), while the (PS) condition could be achieved by multiplying $g$ by a positive function which is infinitesimal at infinity (since the dual norm is multiplied by the inverse of this function, this operation reduces the set of the (PS) sequences).
\end{Rmk}

\begin{Rmk}
The mountain pass theorem holds also if $f$ is just continuously differentiable. In this case, its negative gradient vector field is just continuous and may not induce a continuous flow. In order to prove the above theorem, one needs to construct a locally Lipschitz pseudo-gradient vector field for $f$, see for instance \cite[Lemma 3.2]{str00}. The same construction allows to prove the mountain pass theorem for continuously differentiable functions on Banach spaces, or more generally on Banach manifolds.
\end{Rmk}

\begin{Rmk}
When dealing with functions on manifolds, it is sometimes useful to have a formulation of the mountain pass theorem which does not involve the choice of a metric. Here is such a formulation. Assume that $f$ is a continuously differentiable function on a Hilbert manifold $\MM$ and that $V$ is a positively complete locally Lipschitz vector field such that $df[V]<0$ on $\MM \setminus\Crit f$. Then the mountain pass theorem holds, provided that we define $(x_n)_{n\in\N}\subset \MM$ to be a $(\PS)_c$ sequence if $f(x_n)$ tends to c and $df(x_n)[V(x_n)]$ is infinitesimal. Now the antagonism is between this form of the (PS) condition and the positive completeness of $V$.
\end{Rmk}

\paragraph{\bf The general minimax principle} In the proof of Theorem \ref{mp} we have not used the fact that $\Gamma$ is a set of curves, but rather that $\Gamma$ is  positively invariant with respect to the negative gradient flow $\phi$ of $f$, meaning that  $\phi_t(\gamma)\in\Gamma$ for all $\gamma\in\Gamma$ and $t\geq 0$. Here $\phi$ is either the flow of $-\nabla f$, when this vector field is positively complete, or the flow of some conformally equivalent positively complete vector field, such as
$-\nabla f/\sqrt{\|\nabla f\|^2+1}$, in the general case.
This simple observation leads to the following powerful generalization of the mountain pass theorem.

\begin{Thm}[\bf General Minimax Principle]
\label{thm:finite c induces PS}
Let $f$ be a $C^{1,1}$ function on the complete Riemannian Hilbert manifold $(\MM,g)$ and let $\Gamma$ be a set of subsets of $\MM$ which is positively invariant with respect to the negative gradient flow of $f$. If the number
\[
c= \inf_{\gamma\in\Gamma}\sup_\gamma f
\]
is finite, then $f$ admits a $(\PS)_c$ sequence. In particular, if $f$ satisfies $(\PS)_c$, then $c$ is a critical value.
\end{Thm}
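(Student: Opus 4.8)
The plan is to transcribe, essentially word for word, the deformation argument used in the proof of Theorem~\ref{mp}, with the curves of $\Gamma$ replaced by arbitrary subsets. First I would argue by contradiction, assuming that $f$ admits no $(\PS)_c$ sequence. A critical point of $f$ at level $c$ would produce such a sequence (take it constant), and more quantitatively, if no $\epsilon>0$ satisfied $\|df\|\geq\epsilon$ on the strip $\{|f-c|\leq\epsilon\}$, then for each $n$ one could pick $x_n$ with $|f(x_n)-c|\leq 1/n$ and $\|df(x_n)\|<1/n$, which is a $(\PS)_c$ sequence. Hence such an $\epsilon>0$ exists. As in Theorem~\ref{mp}, I would assume for simplicity that $-\nabla f$ is positively complete, the general case being reduced to this one by passing to the conformally equivalent complete field $-\nabla f/\sqrt{\|\nabla f\|^2+1}$ of Remark~\ref{noncomp}; note that this rescaling changes only the parametrization of the flow lines, not the flow lines themselves as subsets of $\MM$, so $\Gamma$ remains positively invariant.

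Let $\phi$ be the negative gradient flow. As in \eqref{decr}, $t\mapsto f(\phi_t(u))$ is non-increasing, and if $|f(\phi_t(u))-c|\leq\epsilon$ for all $t\in[0,T]$ then
\[
2\epsilon\ \geq\ f(u)-f(\phi_T(u))\ =\ \int_0^T\bigl\|df(\phi_t(u))\bigr\|^2\,dt\ \geq\ \epsilon^2 T,
\]
so no orbit remains in the strip $\{|f-c|\leq\epsilon\}$ for time longer than $2/\epsilon$. Fixing any $T>2/\epsilon$, it follows that every $u$ with $f(u)\leq c+\epsilon$ satisfies $f(\phi_T(u))<c-\epsilon$: if $f(u)<c-\epsilon$ this holds by monotonicity, and if $f(u)\in[c-\epsilon,c+\epsilon]$ then the orbit must leave the strip before time $T$, after which $f$ has already dropped below $c-\epsilon$ and stays there.

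To finish, I would use that $c$ is an infimum over $\Gamma$: choose $\gamma\in\Gamma$ with $\sup_\gamma f\leq c+\epsilon$ (possible because $c<+\infty$). By the previous paragraph $\sup_{\phi_T(\gamma)}f\leq c-\epsilon$, while $\phi_T(\gamma)\in\Gamma$ by positive invariance; therefore $c\leq\sup_{\phi_T(\gamma)}f\leq c-\epsilon$, a contradiction. This yields a $(\PS)_c$ sequence. If in addition $f$ satisfies $(\PS)_c$, any such sequence has a convergent subsequence, whose limit $x$ satisfies $f(x)=c$ and $df(x)=0$, so $c$ is a critical value.

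Since the estimates are identical to those in Theorem~\ref{mp}, I do not expect a genuine obstacle here; the only point requiring care is that the hypothesis on $\Gamma$ is a statement about unparametrized forward orbits, so it survives the reparametrization needed to make the descending flow positively complete on the merely complete Riemannian manifold $(\MM,g)$. Everything else is a routine copy of the mountain pass argument.
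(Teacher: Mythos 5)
Your proposal is correct and follows exactly the route the paper intends: the paper's own ``proof'' of Theorem~\ref{thm:finite c induces PS} consists of the single remark that it is a straightforward modification of the proof of Theorem~\ref{mp}, which is precisely the transcription you carry out. If anything, you are more careful than the text in addressing why positive invariance of $\Gamma$ survives the passage to the rescaled complete vector field of Remark~\ref{noncomp}, a point the paper handles only by declaring that the relevant flow is that of the conformally equivalent field.
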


The proof is a straightforward modification of the proof of Theorem \ref{mp}.

\begin{Ex}
Let $f\in C^{1,1}(H)$, where $H$ is a Hilbert space. If $\pi_k(\{f<a\})\ne 0$ for some $k\geq 0$ and $f$ satisfies $(\PS)$,  then $f$ has a critical point. Indeed, we can consider the set
\[
\begin{split}
\Gamma := \bigl\{ z(\overline{B}^{k+1}) \; \Big| \; & z: (\overline{B}^{k+1},\partial B^{k+1}) \rightarrow (H,\{f<a\}) \mbox{ continuous map such that }\\  & [z|_{\partial B^{k+1}}] \neq 0 \mbox{ in } \pi_k(\{f<a\})\Bigr\},
\end{split}
\]
where $B^{k+1}$ denotes the unit open ball of dimension $k+1$. By applying Theorem \ref{thm:finite c induces PS} with such a $\Gamma$ we get the existence of a critical point at level $c\geq a$. The case $k=0$ is precisely the Mountain Pass Theorem \ref{mp}.
\end{Ex}

\begin{Rmk}
\label{mini}
If $\Gamma$ is the class of all one-point sets in $\MM$, then $c$ is the infimum of $f$. Therefore, the general minimax principle has as a particular case the following existing result for minimizers: Assume that $f\in C^{1,1}(\MM)$ is bounded from below, has complete sublevels and satisfies $(\PS)_c$ at the level $c=\inf f$; then $f$ has a minimizer.
\end{Rmk}

\begin{Rmk}
\label{trunc}
It is sometimes useful to replace the negative gradient flow by a flow which fixes a certain sublevel of $f$. Let $\rho:\R\pf\R^+$ be a smooth bounded function such that $\rho=0$ on $(-\infty,b]$ and $\rho>0$ on $(b,+\infty)$. Then we consider the vector field $V=-\rho(f)\cdot\nabla f$ (or $V= - \rho(f) \nabla f/\sqrt{\|\nabla f\|^2+1}$ in the non-positively complete case) and denote its flow by $\phi$. It is a negative gradient flow truncated below level $b$: The function $t\mapsto f(\phi_t(u))$ is constant if $u\in \Crit f \cup \{f\leq b\}$ and it is strictly decreasing otherwise.
If $\Gamma$ is positively invariant with respect to this negative gradient flow truncated below level $b$ and the minimax value $c$ is strictly larger than $b$, then $f$ has a $(\PS)_c$ sequence.
\end{Rmk}

\section{A Hilbert manifold of loops}
\label{hmlsec}

Let $(M,g)$ be a closed Riemannian manifold of dimension $n$ and consider the Sobolev space of loops
$$
W^{1,2}(\T,M):=\Big\{x:\T\pf M\,\Big|\,x\textrm{ is absolutely continuous and } \int_\T|x'(s)|^2_{x(s)}ds<\infty\Big\},
$$
where $\T:=\R/\Z$ and $|\cdot|_{\cdot}$ denotes the norm induced by $g$. This set of loops is clearly independent from the choice of the Riemannian metric $g$.

\medskip

\paragraph{\bf The smooth structure of $\mathbf{W^{1,2}(T,M)}$} Let us recall the construction of the smooth Hilbert manifold structure on $W^{1,2}(\T,M)$.
Fix $x_0\in C^\infty(\T,M)$. Assume for simplicity that $x_0$ preserves the orientation, so that $x_0^*(TM)$ has a trivialization
$$
\Phi: \T\x\R^n\pf x_0^*(TM).
$$
Let $B_r$ be the open ball of radius $r$ about $0$ in $\R^n$. Consider a smooth map
\[
\varphi:\T\x B_r \pf M,
\]
such that $\varphi(t,0)=x_0(t)$ and $\varphi(t,\cdot)$ is a diffeomorphism onto an open subset in $M$, for every $t\in \T$. For instance, the map
\[
\varphi(t,\xi)= \exp_{x_0(t)}\big(\Phi(t,\xi)\big),
\]
satisfies the above requirements if $r$ is small enough.

The map $\varphi$ induces the following parameterization:
\begin{equation}
\label{eq:chart of the loop space}
\varphi_*: W^{1,2}(\T,B_r)\pf W^{1,2}(\T,M), \quad
\zeta\mapsto\varphi\big(\cdot,\zeta(\cdot)\big),
\end{equation}
where $W^{1,2}(\T,B_r)$ denotes the open subset of the Hilbert space $W^{1,2}(\T,\R^n)$ which consists of loops taking values into $B_r$. The collection of all these parameterizations, for every $x_0\in C^{\infty}(\T,M)$ and every $\varphi$ as above, defines a smooth atlas for $W^{1,2}(\T,M)$, which is then a smooth manifold modeled on the Hilbert space $W^{1,2}(\T,\R^n)$. Indeed, the smoothness of the transition maps is an immediate consequence of the chain rule.
It is worth noticing that the image of the parameterization $\varphi_*$ is $C^0$-open.

\begin{Rmk}
If $x_0$ is not orientation preserving, the natural model for the connected component of $W^{1,2}(\T,M)$ which contains $x_0$ is the space of $W^{1,2}$ sections of the vector bundle $x_0^*(TM)$. Alternatively, one can define a manifold structure on $W^{1,2}([0,1],M)$ without encountering topological problems, and then see $W^{1,2}(\T,M)$ as the inverse image of the diagonal of $M \times M$ by the smooth submersion 
\[
W^{1,2}([0,1],M) \rightarrow M \times M, \qquad x \mapsto (x(0),x(1)).
\]
\end{Rmk}

The tangent space of $W^{1,2}(\T,M)$ at $x$ is naturally identified with the space of $W^{1,2}$ sections of $x^*(TM)$. Therefore, we can define a Riemannian metric on $W^{1,2}(\T,M)$ by setting
\begin{equation}
\label{metr}
\langle \xi,\eta\rangle_x:=\int_{\T} \Bigl( g(\xi,\eta)+g(\nabla_t\xi,\nabla_t\eta) \Bigr) \,dt, \quad \forall \xi,\eta\in T_x W^{1,2}(\T,M),
\end{equation}
where $\nabla_t$ denotes the Levi-Civita covariant derivative along $x$. The distance induced by this Riemannian metric is compatible with the topology of $W^{1,2}(\T,M)$.
The fact that $M$ is compact implies that this metric on $W^{1,2}(\T,M)$ is complete (more generally, this metric is complete whenever $g$ is complete).

The gradient of functionals on $W^{1,2}(\T,M)$ is the one which is associated to such a Riemannian metric.

\begin{Rmk}
If $\varphi$ is the restriction of a smooth map $B_{r'}\times \T\rightarrow M$ with the same properties, for some $r'>r$, then the parameterization $\varphi_*$ is bi-Lipschitz.
\end{Rmk}

See e.g. \cite{kli82} for more details on the Hilbert manifold structure of $W^{1,2}(\T,M)$.

\medskip

\paragraph{\bf The homotopy type of $\mathbf{W^{1,2}(T,M)}$} The inclusions
$$
C^\infty(\T,M)\into W^{1,2}(\T,M)\into C(\T,M)
$$
are dense homotopy equivalences. These facts can be proved by embedding $M$ into a Euclidean space $\R^N$, by regularizing the loops $x:\T\rightarrow M\subset \R^N$ by convolution, and by projecting the regularized loop back to $M$ using the tubular neighborhood theorem. In particular, the connected components of $W^{1,2}(\T,M)$ are in one-to-one correspondence with the conjugacy classes of
$\pi_1(M)$. See, e.g., \cite[Chapter 10]{lee03} for more details.

\section{The free period action functional}
\label{sec3}

\paragraph{\bf Tonelli Lagrangians} Let $M$ be a connected closed manifold.
A function $L\in C^\infty(TM)$ is called a {\em Tonelli Lagrangian} if:
\begin{enumerate}
\item $L$ is fiberwise uniformly convex, i.e.\ $d_{vv}L(x,v)>0$ for every $(x,v)\in TM$, where $d_{vv}L$ denotes the fiberwise second differential of $L$;
\item $L$ has superlinear growth on each fiber, i.e.
\[
\lim_{|v|\to +\infty}\frac{L(x,v)}{|v|_x}=+\infty.
\]
\end{enumerate}
The main example of Tonelli Lagrangians is given by the {\em electromagnetic Lagrangians}, that is functions of the form
\begin{equation}
\label{elmag}
L(x,v)=\frac{1}{2}|v|^2_x+\theta(x)[v]-V(x),
\end{equation}
where $|\cdot|_x$ denotes the norm associated to a Riemannian metric (the kinetic energy), $\theta$ is a smooth one-form (the magnetic potential) and $V$ is a smooth function (the scalar potential) on $M$. We shall omit the subscript $x$ in $|\cdot|_x$ when the point $x$ is clear from the context.
The Tonelli assumptions imply that the Euler-Lagrange equation, which in local coordinates can be written as
\begin{equation}
\label{EL}
\frac{d}{dt}\bigl( \p_v L (\gamma(t),\gamma'(t)) \bigr)= \p_x L (\gamma(t),\gamma'(t)),
\end{equation}
is well-posed and defines a smooth flow on $TM$. This flow preserves the energy
\[
E:TM\rightarrow \R, \quad
E(x,v):=d_v L(x,v)[v]-L(x,v),
\]
where $d_v$ denotes the fiberwise differential.
When $L$ has the form (\ref{elmag}), then
\begin{equation}
\label{ene}
E(x,v) = \frac{1}{2} |v|^2 + V(x).
\end{equation}
In general, the energy function of a Tonelli Lagrangian satisfies the following properties:
\begin{itemize}
\item[(i)] $E$ is fiberwise uniformly convex and superlinear.
\item[(ii)] For any $x\in M$, the restriction of $E$ to $T_x M$ achieves its minimum at $v=0$.
\item[(iii)] The point $(\bar x,0)$ is singular for the Euler-Lagrange flow if and only if $(\bar x,0)$ is a critical point of $E$.
\end{itemize}

We are interested in proving the existence of periodic orbits on a given energy level $E^{-1}(\kappa)$. Since such an energy level is compact, up to the modification of $L$ far away from it, we may assume that the Tonelli Lagrangian $L(x,v)$ is electromagnetic for $|v|$ large. In particular, we have the inequalities
\begin{eqnarray}
\label{boundsonL}
L(x,v) \geq L_0 |v|^2 - L_1, \qquad & \forall (x,v)\in TM,\\
\label{bd2L}
d^2_{vv} L(x,v)[u,u] \geq 2 L_0 |u|^2, \qquad &\forall (x,v)\in TM, \; u\in T_x M,
\end{eqnarray}
for some numbers $L_0>0$ and $L_1\in \R$. Moreover, $E$ has the form (\ref{ene}) for $|v|$ large.

\medskip

\paragraph{\bf The free period action functional} We would like to study the Lagrangian action on the space of closed curves of arbitrary period. The latter space can be given a manifold structure by reparametrizing each curve on $\T$ and by keeping track of its period as a second variable:
Let $\gamma :\R/{T\Z}\pf M$ be an absolutely continuous $T$-periodic curve and define $x:\T \rightarrow M$ as $x(s):=\gamma(sT)$. The closed curve $\gamma$ is identified with the pair $(x,T)$.
The action of $\gamma$ on the time interval $[0,T]$ is the number
\[
\int_0^TL\bigl(\gamma(t),\gamma'(t)\bigr)\, dt= T\int_{\T} L\bigl(x(s),x'(s)/{T}\bigr)\, ds.
\]
Fix a real number $\kappa$, the value of the energy for which we would like to find periodic solutions. Consider the {\em free period action functional} corresponding to the energy $\kappa$
\[
\SSS_{\kappa}(\gamma) =
\SSS_{\kappa} (x,T) := T\int_{\T} \Bigl( L\bigl(x(s),x'(s)/T\bigr) + \kappa \Bigr)\,ds = \int_0^T \Bigl( L\bigl( \gamma(t),\gamma'(t)\bigr)+\kappa\Bigr)\, dt.
\]
The fact that $L$ is electromagnetic outside a compact subset of $TM$ implies that $\SSS_{\kappa}(x,T)$ is well-defined when $x\in W^{1,2}(\T,M)$. Therefore, we obtain a functional
\[
\SSS_{\kappa} : W^{1,2}(\T,M) \times (0,+\infty) \rightarrow \R.
\]
The Hilbert manifold $W^{1,2}(\T,M) \times (0,+\infty)$ is denoted by $\MM$.

\begin{Lemma}(Regularity properties of $\SSS_{\kappa}$)
\begin{itemize}
\item[(i)] $\SSS_{\kappa}$ is in $C^{1,1}(\MM)$ and is twice Gateaux differentiable at every point.
\item[(ii)] $\SSS_{\kappa}$ is twice Fr\'ech\'{e}t differentiable at every point if and only if $L$ is electromagnetic on the whole $TM$. In this case, $\SSS_{\kappa}$ is actually smooth on $\MM$.
\end{itemize}
\end{Lemma}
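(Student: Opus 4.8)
The plan is to work chart-by-chart, using the parameterizations $\varphi_*$ from Section \ref{hmlsec}. Fix a chart $\varphi_* : W^{1,2}(\T,B_r) \to W^{1,2}(\T,M)$ and write $\zeta \in W^{1,2}(\T,B_r) \subset W^{1,2}(\T,\R^n)$; the second factor $T \in (0,+\infty)$ is a plain Hilbert coordinate. In this chart the functional becomes $(\zeta,T) \mapsto T\int_\T \bigl(\ell(s,\zeta(s),\zeta'(s)/T) + \kappa\bigr)\, ds$ for a suitable smooth function $\ell(s,\xi,w)$ obtained by pulling $L$ back through $\varphi$; the Tonelli bounds \eqref{boundsonL}--\eqref{bd2L} and the fact that $L$ is electromagnetic for large $|v|$ translate into: $\ell$ is smooth, $d^2_{ww}\ell$ is bounded below and (because $L$ is electromagnetic outside a compact set) $d^2_{ww}\ell$ is \emph{globally bounded above}, while the higher $w$-derivatives $d^3_{www}\ell$, $d^4_{wwww}\ell$, etc.\ are bounded but need \emph{not} tend to anything as $|w|\to\infty$. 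So everything reduces to a standard Nemytskii-operator analysis of an integral functional $\int_\T F(s,\zeta(s),\zeta'(s))\,ds$ on $W^{1,2}(\T,\R^n)$, where the integrand $F$ has at most quadratic growth in the derivative variable together with the stated bounds on its $w$-Hessian. I would treat the $T$-variable essentially by inspection, since $\partial_T$ produces only the substitution $w \mapsto w/T$ and explicit factors of $T$ and $1/T$; the analytic content is entirely in the loop variable.

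For part (i), first I would establish that $\SSS_\kappa$ is $C^1$ with $d\SSS_\kappa(x,T)$ given by the usual first-variation integral; this is the classical computation that the Euler--Lagrange form of an integral functional with a quadratically-growing integrand defines a continuous map on $W^{1,2}$, using the continuity of Nemytskii operators $L^2 \to L^2$ induced by Carathéodory functions of linear growth (here the relevant growth comes from $d_w\ell$ being affine in $w$ up to bounded error, thanks to the electromagnetic tail). Next, differentiating once more formally, the second variation is $d^2\SSS_\kappa(x,T)[(\xi,\sigma),(\xi,\sigma)] = \int_\T\bigl( d^2_{ww}\ell\,[\xi'/T - \sigma w/T]^2 + \text{lower order}\bigr) + \dots$; this bilinear form exists because $d^2_{ww}\ell$ is bounded, giving Gâteaux second differentiability, and the local Lipschitz continuity of $d\SSS_\kappa$ (the $C^{1,1}$ claim) follows because the difference of two first-variation forms is controlled by the Lipschitz constant of $d_w\ell$ in $(\xi,w)$ over a bounded $C^0$-neighborhood — and $W^{1,2}(\T,\R^n) \hookrightarrow C^0$, so bounded $W^{1,2}$ sets are bounded in $C^0$, which is exactly where one needs the one-dimensional Sobolev embedding. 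I would point out explicitly that the chart images are $C^0$-open (as noted after \eqref{eq:chart of the loop space}) and that, by the last Remark of Section \ref{hmlsec}, one may choose charts that are bi-Lipschitz, so the $C^{1,1}$ property is genuinely chart-independent.

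For part (ii), the point is that Fréchet (as opposed to Gâteaux) second differentiability of $\int_\T F(s,\zeta,\zeta')\,ds$ forces the Nemytskii operator $\zeta \mapsto d^2_{ww}F(\cdot,\zeta(\cdot),\zeta'(\cdot))$ to be \emph{continuous} from $W^{1,2}$ into $\mathcal{L}(L^2,L^2) = L^\infty$, and a Carathéodory-integrand Nemytskii operator into $L^\infty$ is continuous only when the integrand is independent of the variable that fails to embed into $L^\infty$ — i.e.\ only when $d^2_{ww}\ell$ does not actually depend on $w$, equivalently when $L$ is quadratic in $v$, i.e.\ electromagnetic. This is the classical phenomenon (going back to Krasnoselskii) that the superposition operator $u \mapsto h(\cdot,u(\cdot))$ is continuous $L^\infty \to L^\infty$ iff $h(s,\cdot)$ is continuous, but continuous $L^p \to L^\infty$ for finite $p$ only for $h$ independent of $u$; here $\zeta' $ ranges over all of $L^2$ but not $L^\infty$. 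So the "only if" direction is the real content and I expect it to be the main obstacle — one must produce, for a genuinely non-electromagnetic $L$, an explicit direction in which the putative second Fréchet differential fails to be a good linearization, by exploiting loops whose derivatives are large in $L^2$ but wildly oscillating. The "if" direction is easy: if $L$ is electromagnetic everywhere then $\SSS_\kappa(x,T)$ is, in each chart, a fixed polynomial expression in $T$, $1/T$ and integrals that are at worst quadratic in $(\zeta,\zeta')$ with smooth coefficients, hence visibly $C^\infty$ as a composition of bounded multilinear maps with smooth Nemytskii operators of the benign (coefficient-type, independent of the non-embedding variable) kind.
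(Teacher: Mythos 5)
The paper does not actually prove this lemma: it states it and refers to \cite{as09b} for the details. Your plan is essentially the standard argument carried out in that reference --- reduction to charts $\varphi_*$ whose images are $C^0$-open, the one-dimensional Sobolev embedding $W^{1,2}(\T,\R^n)\hookrightarrow C^0$, the growth bounds on $\ell$ and its $w$-derivatives coming from \eqref{boundsonL}--\eqref{bd2L} and the electromagnetic tail, and the Krasnoselskii-type rigidity of Nemytskii operators for the ``only if'' half of (ii) --- so in approach you are exactly aligned with the source the paper cites.

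The one place where your write-up is genuinely incomplete rather than merely compressed is the step you yourself flag as ``the main obstacle'': showing that for a non-electromagnetic $L$ the second Fr\'echet differential actually fails. Your heuristic (``derivatives large in $L^2$ but wildly oscillating'') points in slightly the wrong direction; oscillation is not the issue, \emph{concentration} is. The standard construction fixes a point $s_0$ and a vector $e$ with $d^2_{ww}\ell(s_0,\zeta(s_0),\zeta'(s_0)+e)\neq d^2_{ww}\ell(s_0,\zeta(s_0),\zeta'(s_0))$, and takes variations $\xi_h$ with $\xi_h'$ equal to a fixed multiple of $e$ on an interval of length $\epsilon_h\to 0$ (corrected to have zero mean), so that $\|\xi_h\|_{W^{1,2}}\sim\sqrt{\epsilon_h}\to 0$ while the derivative stays pointwise of size $O(1)$ on the support. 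Testing the first-order Taylor remainder of $d\SSS_{\kappa}$ against $\eta_h=\xi_h/\|\xi_h\|_{W^{1,2}}$ then produces a term of order $\|\xi_h\|_{W^{1,2}}$ rather than $o(\|\xi_h\|_{W^{1,2}})$, because the second-order increment of $d_w\ell$ in the direction $e$ is not the one predicted by the Hessian at the base point. Spelling this out is the only substantive addition your plan needs; the remaining steps (the $C^{1,1}$ estimates on $W^{1,2}$-bounded, hence $C^0$-bounded, sets, the Gateaux second differential by dominated convergence, and the smoothness in the genuinely quadratic case) are routine exactly as you describe them.
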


See e.g.\ \cite{as09b} for a detailed proof.

If $d_x$ denotes the horizontal differential with respect to some horizontal-vertical splitting of $TTM$, the differential of $\SSS_{\kappa}$  with respect to the first variable at some $(x,T)\in \MM$ has the form
\begin{equation}
\label{diff1}
\begin{split}
d\SSS_{\kappa} (x,T) \bigl[(\xi,0)\bigr] & = T \int_0^1 \Bigl( d_x L\bigl( x, x'/T \bigr) [ \xi ] + d_v L\bigl( x, x'/T \bigr) \bigl[ \nabla_s \xi/T \bigr] \Bigr)\, ds \\ & = \int_0^T \Bigl( d_x L \bigl(\gamma,\gamma'\bigr) [\zeta] +   d_v L \bigl(\gamma,\gamma'\bigr) [\nabla_t \zeta] \Bigr)\, dt,
\end{split}\end{equation}
where $\xi\in T_x W^{1,2}(\T,M)$, $\gamma(t)= x(t/T)$ and $\zeta(t):=\xi(t/T)$. 
Let $(x,T)$ be a critical point of $\SSS_{\kappa}$. The above formula and an integration by parts imply that $\gamma$ is a $T$-periodic solution of (\ref{EL}). Moreover
\begin{equation}
\label{eq:diff A w.r.t. T}
\begin{split}
\frac{\partial \SSS_{\kappa}}{\partial T}  (x,T) &= \int_{\T} \Bigl( L\bigl( x(s),x'(s)/T\bigr) + \kappa + T \, d_v L\bigl( x(s),x'(s)/T\bigr) \bigl[-x'(s)/T^2\bigr] \Bigr)\, ds \\ &= \int_{\T} \Bigl( \kappa - E\bigl( x(s),x'(s)/T\bigr) \Bigr)\, ds = \frac{1}{T}\int_0^T \Bigl( \kappa - E\bigl( \gamma(t),\gamma'(t)\bigr) \Bigr) \, dt.
\end{split}
\end{equation}
Together with the fact that $E$ is constant along the orbits of the Euler-Lagrange flow, the above identity shows that the $T$-periodic orbit $\gamma$ belongs to the energy levek $E^{-1}(\kappa)$. We conclude that $(x,T)$ is a critical point of $\SSS_{\kappa}$ on $\MM$ if and only if $\gamma(t):= x(t/T)$ is a $T$-periodic orbit of energy $\kappa$ ($T$ is not necessarily the minimal period).

\medskip

\paragraph{\bf Behavior of $\SSS_{\kappa}$ for $\mathbf{T\to 0}$} The Hilbert manifold $\MM=W^{1,2}(\T,M)\x (0,+\infty)$ is endowed with the product Riemannian structure of (\ref{metr}) and the Euclidean metric of $(0,+\infty) \subset \R$. As such, it is not complete, the non-converging Cauchy sequences being the sequences $(x_h,T_h)$ with $x_h \rightarrow x\in W^{1,2}(\T,M)$ and $T_h \rightarrow 0$. Therefore, we need to understand the behavior of $\SSS_{\kappa}$ on such sequences.

We decompose $\MM$ as $\MM = \MM^{\mathrm{contr}} \sqcup \MM^{\mathrm{noncontr}}$, where $\MM^{\mathrm{contr}}$ denotes the connected component consisting of contractible loops.

\begin{Lemma}
\label{Lem:3}
\begin{enumerate}
\item On $\MM^\mathrm{noncontr}$ the sublevels $\{\SSS_{\kappa}\leq c\}$ are complete. More precisely, if $(x_h,T_h)\in \MM^\mathrm{noncontr}$ and $T_h \rightarrow 0$, then $\SSS_{\kappa}(x_h,T_h)\rightarrow +\infty$.
\item If $(x_h,T_h)\in\MM^\mathrm{contr}$ and $T_h\to 0$, then $\liminf_h\SSS_{\kappa}(x_h,T_h)\geq0$.
\end{enumerate}
\end{Lemma}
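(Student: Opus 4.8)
The plan is to estimate $\SSS_{\kappa}(x_h,T_h)$ from below using the inequality $(\ref{boundsonL})$, which holds because $L$ is electromagnetic outside a compact set. Writing $\gamma_h(t)=x_h(t/T_h)$, we have
\[
\SSS_{\kappa}(x_h,T_h) = \int_0^{T_h}\bigl(L(\gamma_h,\gamma_h')+\kappa\bigr)\,dt \geq \int_0^{T_h}\bigl(L_0|\gamma_h'|^2 - L_1 + \kappa\bigr)\,dt = L_0\int_0^{T_h}|\gamma_h'|^2\,dt + (\kappa - L_1)T_h.
\]
Rescaling back to $\T$, the kinetic term equals $L_0 T_h^{-1}\int_\T |x_h'(s)|^2\,ds$, so
\[
\SSS_{\kappa}(x_h,T_h) \geq \frac{L_0}{T_h}\int_\T |x_h'(s)|^2\,ds + (\kappa-L_1)T_h.
\]
Since $T_h\to 0$, the second term tends to $0$, so everything hinges on the behavior of the length-type quantity $\ell_h := \int_\T |x_h'(s)|^2\,ds$.

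For part (i), the key point is a uniform positive lower bound on $\ell_h$ coming from topology. If $x_h$ lies in a non-trivial free homotopy class, then its image is a non-contractible loop, hence has Riemannian length bounded below by some $\delta>0$ (the shortest non-contractible loop, or more precisely the systole of the relevant conjugacy class in $\pi_1(M)$, which is positive because $M$ is compact). By Cauchy–Schwarz, $\ell_h \geq \bigl(\int_\T |x_h'|\,ds\bigr)^2 = \mathrm{length}(x_h)^2 \geq \delta^2 > 0$. Therefore
\[
\SSS_{\kappa}(x_h,T_h) \geq \frac{L_0\,\delta^2}{T_h} + (\kappa-L_1)T_h \longrightarrow +\infty
\]
as $T_h\to 0$, which is exactly the claimed statement. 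The completeness of the sublevels $\{\SSS_{\kappa}\leq c\}$ in $\MM^{\mathrm{noncontr}}$ then follows: a Cauchy sequence in $\MM$ either converges in $\MM$ (nothing to prove) or has $T_h\to 0$, but the latter forces $\SSS_{\kappa}\to+\infty$, so it cannot stay in a sublevel.

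For part (ii), there is no topological lower bound on $\ell_h$ — contractible loops can be arbitrarily short, so $\ell_h$ may go to $0$. Here we simply use $\ell_h \geq 0$ and $\SSS_{\kappa}(x_h,T_h)\geq (\kappa-L_1)T_h \to 0$, giving $\liminf_h \SSS_{\kappa}(x_h,T_h)\geq 0$. One subtlety worth noting: the constant $L_1$ could be positive, so the term $(\kappa-L_1)T_h$ need not be nonnegative, but it is $o(1)$ regardless, which is all that is needed. The main (mild) obstacle in the whole argument is making the systole bound in part (i) precise — one should confirm that within a fixed connected component of $W^{1,2}(\T,M)$, i.e. a fixed conjugacy class in $\pi_1(M)$, the infimum of Riemannian lengths of representatives is a strictly positive number; this is a standard consequence of compactness of $M$ together with the fact that the trivial class is the only one containing arbitrarily short loops.
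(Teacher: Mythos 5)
Your proof is correct and follows essentially the same route as the paper: the lower bound from (\ref{boundsonL}) rescaled to $\T$, the Cauchy--Schwarz inequality $\int_\T|x'|^2\,ds\geq \ell(x)^2$, and the fact that non-contractible loops in a closed manifold have length bounded away from zero. The only difference is that you spell out the completeness deduction and the systole bound more explicitly, which the paper leaves implicit.
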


\begin{proof}
By (\ref{boundsonL}), we have the chain of inequalities
\begin{equation}
\label{lowbdonA}
\begin{split}
\SSS_{\kappa} (x,T) &= T \int_{\T} \Bigl( L\bigl(x,x'/T\bigr) + \kappa \Big)\, ds \geq T \int_{\T} \Bigl( L_0 \frac{|x'|^2}{T^2} -L_1 + \kappa \Bigr)\, ds
\\ &= \frac{L_0}{T} \int_{\T} |x'|^2\, ds - (L_1-\kappa) T
\geq \frac{L_0}{T} \ell(x)^2 - (L_1 - \kappa)T,
\end{split} \end{equation}
where $\ell(x)$ denotes the length of the loop $x$. The length of the non-contractible loops in $M$ is bounded away from zero. Therefore, the estimate (\ref{lowbdonA}) implies statement (i). Statement (ii) is also an immediate consequence of (\ref{lowbdonA}).
\end{proof}

Since $\mathcal{M}$ is not complete, we cannot expect the vector field $-\nabla \SSS_{\kappa}$ to be positively complete. However, the only sources of non-completeness is the second component approaching zero. The next result says that this may happen only at level zero:

\begin{Lemma}\label{Lem:4}
Let $(x,T):[0,\sigma^*)\pf\MM^\mathrm{contr}$, $0<\sigma^*<\infty$, be a flow line of $-\nabla \SSS_{\kappa}$ such that
\[
\liminf_{\sigma\to\sigma^*}T(\sigma)=0.
\]
Then
\[
\lim_{\sigma\to\sigma^*}\SSS_{\kappa} \big(x(\sigma),T(\sigma)\big)=0.
\]
\end{Lemma}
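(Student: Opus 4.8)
The plan is to exploit the fact that a flow line of $-\nabla\SSS_\kappa$ is action-decreasing in $\sigma$, together with the lower bound from Lemma \ref{Lem:3}(ii) for loops with short period, to trap the limit of the action between $0$ and its value at $\sigma=0$, and then to use the monotonicity and a Cauchy-type argument in the curve variable $x$ to pin the limit down to exactly $0$. More precisely, along the flow line the function $\sigma\mapsto\SSS_\kappa(x(\sigma),T(\sigma))$ is nonincreasing, hence converges as $\sigma\to\sigma^*$ to some value $c\in\R\cup\{-\infty\}$ (finiteness will come out of the argument). Choosing a subsequence $\sigma_h\to\sigma^*$ with $T(\sigma_h)\to 0$, Lemma \ref{Lem:3}(ii) gives $\liminf_h\SSS_\kappa(x(\sigma_h),T(\sigma_h))\ge 0$, so $c\ge 0$. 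The remaining task is the reverse inequality $c\le 0$.

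For the upper bound the key point is that the action cannot stay bounded away from zero while the period variable runs down to zero along a \emph{finite} flow-time interval. First I would observe that, since $\sigma^*<\infty$ and the metric on the $(0,+\infty)$-factor is Euclidean, the total length of the flow line in the $T$-direction is controlled: $\int_0^{\sigma^*}|T'(\sigma)|\,d\sigma\le\int_0^{\sigma^*}\|\nabla\SSS_\kappa(x(\sigma),T(\sigma))\|\,d\sigma$, and this last integral is bounded by $\sqrt{\sigma^*}\bigl(\int_0^{\sigma^*}\|\nabla\SSS_\kappa\|^2\,d\sigma\bigr)^{1/2}=\sqrt{\sigma^*}\,\bigl(\SSS_\kappa(x(0),T(0))-c\bigr)^{1/2}$ by the energy identity (\ref{decr}) applied to $\SSS_\kappa$. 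In particular $c>-\infty$, and moreover the $T$-component has bounded variation on $[0,\sigma^*)$, so $\lim_{\sigma\to\sigma^*}T(\sigma)$ exists; combined with $\liminf_{\sigma\to\sigma^*}T(\sigma)=0$ this forces $T(\sigma)\to 0$. Likewise the $x$-component $x(\sigma)$ is Cauchy in $W^{1,2}(\T,M)$ as $\sigma\to\sigma^*$ by the same $L^2$-in-$\sigma$ estimate, hence converges to some $\bar x\in W^{1,2}(\T,M)$. Then $(x(\sigma),T(\sigma))\to(\bar x,0)$ in the completion, and applying Lemma \ref{Lem:3}(ii) to this sequence itself (not just a subsequence) gives $c=\lim_{\sigma\to\sigma^*}\SSS_\kappa(x(\sigma),T(\sigma))\ge 0$ — which is the bound we already had, so an extra ingredient is needed to get $c\le 0$.

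The missing ingredient is a matching \emph{upper} estimate as $T\to 0$ that holds along the flow, namely $\limsup_{\sigma\to\sigma^*}\SSS_\kappa(x(\sigma),T(\sigma))\le 0$. Here I would use the explicit formula (\ref{eq:diff A w.r.t. T}) for $\partial\SSS_\kappa/\partial T$: since $L$ is electromagnetic for large $|v|$, when $T$ is small we have $x'(s)/T$ large wherever $x'(s)\ne 0$, so $E(x(s),x'(s)/T)=\tfrac12|x'(s)|^2/T^2+V(x(s))$ on the bulk of $\T$, whence $\partial_T\SSS_\kappa(x(\sigma),T(\sigma))\le\kappa-\tfrac{1}{T^2}\cdot\tfrac12\int_\T|x'|^2\,ds+O(1)$, which is very negative unless $\int_\T|x'(\sigma)|^2\,ds=O(T(\sigma)^2)$. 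In the latter regime the lower bound (\ref{lowbdonA}) also furnishes an $O(T)$ upper bound on the action via $\SSS_\kappa(x,T)=\tfrac{L_0}{T}\int_\T|x'|^2+O(T)$ combined with the corresponding upper estimate $L(x,v)\le L_0'|v|^2+L_1'$ valid since $L$ is electromagnetic for large $|v|$ and smooth on the compact zero section region — giving $\SSS_\kappa(x,T)\le \tfrac{L_0'}{T}\int_\T|x'|^2+(L_1'+\kappa)T=O(T)\to 0$. The \textbf{main obstacle} is making this dichotomy rigorous along the flow line: one must rule out that the ``short loop, small speed'' regime and the ``large speed'' regime alternate in a way that keeps the action elevated, and the clean way to do this is to integrate: $c-\SSS_\kappa(x(0),T(0))=-\int_0^{\sigma^*}\|\nabla\SSS_\kappa\|^2\,d\sigma\le 0$ already, while the above shows that whenever $T(\sigma)$ is small the gradient is large unless the action is itself small, so a limiting argument (or a direct contradiction assuming $c>0$, which would force $T(\sigma)$ bounded below near $\sigma^*$ by the lower bound $\SSS_\kappa\le c$ together with (\ref{lowbdonA}), contradicting $\liminf T=0$) closes the estimate. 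I expect this last contradiction argument — assume $c>0$, use $\SSS_\kappa(x(\sigma),T(\sigma))\le\SSS_\kappa(x(\sigma'),T(\sigma'))$ for $\sigma\ge\sigma'$ together with (\ref{lowbdonA}) to bound $T(\sigma)$ away from $0$, contradicting the hypothesis — to be the cleanest route, and it is exactly the step that needs to be written carefully.
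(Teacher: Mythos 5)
Your setup is right: monotonicity of $\sigma\mapsto\SSS_{\kappa}(x(\sigma),T(\sigma))$ plus Lemma \ref{Lem:3}(ii) along a subsequence with $T\to 0$ gives that the limit $c$ exists and satisfies $c\geq 0$, and the whole content of the lemma is the matching bound $c\leq 0$. You also essentially derive the key quantitative input, namely that \eqref{eq:diff A w.r.t. T} together with an estimate of the form $E\geq C_0L-C_1$ yields
\[
\SSS_{\kappa}(x,T)\ \leq\ \frac{T}{C_0}\Bigl(C-\frac{\partial\SSS_{\kappa}}{\partial T}(x,T)\Bigr),
\]
so that the action is $O(T)$ at any point where $\partial\SSS_{\kappa}/\partial T$ is not very negative. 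But the step you single out as ``the cleanest route'' --- assume $c>0$ and use the monotonicity of the action together with \eqref{lowbdonA} to bound $T(\sigma)$ away from zero --- does not work. The estimate \eqref{lowbdonA} is a \emph{lower} bound on the action, and it is perfectly consistent with $\SSS_{\kappa}(x,T)\geq c>0$ and $T\to 0$ simultaneously: take loops with $\ell(x)^2\sim cT/L_0$ and the right-hand side of \eqref{lowbdonA} tends to $0$, imposing no constraint. Nothing in \eqref{lowbdonA} alone can convert positivity of the action into a positive lower bound for the period; that conversion requires using that $(x(\sigma),T(\sigma))$ is a flow line, i.e.\ the identity $T'(\sigma)=-\partial_T\SSS_{\kappa}(x(\sigma),T(\sigma))$, which your closing argument never invokes.

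The missing idea is a sign observation on the $T$-component of the flow. Since $\liminf_{\sigma\to\sigma^*}T(\sigma)=0$, one can choose $\sigma_h\uparrow\sigma^*$ with $T(\sigma_h)\to 0$ \emph{and} $T'(\sigma_h)\leq 0$; at such points $\partial_T\SSS_{\kappa}=-T'(\sigma_h)\geq 0$, so the displayed inequality gives $\SSS_{\kappa}(x(\sigma_h),T(\sigma_h))\leq \frac{C}{C_0}T(\sigma_h)\to 0$, whence $c\leq 0$. (Alternatively, your Cauchy/BV observation does show that $T(\sigma)\to 0$ as a full limit, and then $c>0$ would force $T'=-\partial_T\SSS_{\kappa}\geq \frac{C_0c}{T}-C>0$ for $\sigma$ near $\sigma^*$, so $T$ would be increasing there --- a contradiction; but this again runs through the flow equation for $T$, not through \eqref{lowbdonA}.) As written, your proof stops one genuine step short of closing.
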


\begin{proof}
Since both $E$ and $L$ are quadratic in $v$ for $|v|$ large, we have the estimate
\[
E(x,v) \geq C_0 \, L(x,v) - C_1,
\]
for some $C_0>0$ and $C_1\in \R$. From (\ref{eq:diff A w.r.t. T}) we obtain the inequality
\begin{equation*}
\begin{split}
\frac{\partial \SSS_{\kappa}}{\partial T} ( x,T) &= \frac{1}{T} \int_0^T \bigl( \kappa - E(\gamma,\gamma') \bigr)\, dt
\leq \frac{1}{T} \int_0^T \bigl( \kappa - C_0 \,L(\gamma,\gamma') + C_1 \bigr)\, dt \\
&= \kappa + C_1 - \frac{C_0}{T} \int_0^T \bigl( L(\gamma,\gamma')+\kappa \bigr)\, dt + C_0 \kappa = (C_0+1) \kappa + C_1 - \frac{C_0}{T} \SSS_{\kappa} (x,T),
\end{split} \end{equation*}
which can be rewritten as
\begin{equation}
\label{boundsondAdt}
\SSS_{\kappa} (x,T) \leq \frac{T}{C_0} \Bigl( C - \frac{\partial \SSS_{\kappa}}{\partial T}  (x,T) \Bigr),
\end{equation}
for a suitable constant $C$.
By the assumption, there exists an increasing sequence $(\sigma_h)$ which converges to $\sigma^*$ and satisfies $T'(\sigma_h)\leq 0$ and
$T(\sigma_h)\rightarrow 0$. Since $\sigma\mapsto (x(\sigma),T(\sigma))$ is a flow line of $-\nabla \SSS_{\kappa}$,
\[
0 \geq T'(\sigma_h) = - \frac{\partial \SSS_{\kappa}}{\partial T} \bigl(x(\sigma_h),T(\sigma_h) \bigr),
\]
and by (\ref{boundsondAdt}) we have
\[
\SSS_{\kappa} \bigl(x(\sigma_h),T(\sigma_h) \bigr) \leq \frac{T(\sigma_h)}{C_0} \Bigl( C - \frac{\partial \SSS_{\kappa}}{\partial T}  \bigl(x(\sigma_h),T(\sigma_h) \bigr) \Bigr) \leq \frac{C}{C_0} T(\sigma_h).
\]
Since $T(\sigma_h)$ is infinitesimal, we obtain
\[
\limsup_{h\rightarrow \infty} \SSS_{k} \bigl(x(\sigma_h),T(\sigma_h) \bigr) \leq 0.
\]
Together with statement (ii) of Lemma \ref{Lem:3} and the monotonicity of the function $\sigma \longmapsto \SSS_{\kappa}(x(\sigma),T(\sigma))$,
this concludes the proof.
\end{proof}

\section{Ma\~n{\'e} critical values, contact type and stability conditions}
\label{mcv}

\paragraph{\bf The critical values} The following numbers should be interpreted as energy levels and mark important dynamical and geometric changes for the Euler-Lagrange flow induced by the Tonelli Lagrangian $L$:
\begin{equation*}
\begin{split}
c_0(L)&:=\inf\{\kappa\in\R \, | \,\SSS_{\kappa}(x,T)\geq 0 \; \forall (x,T)\in \MM \mbox{ with }x\mbox{ homologous to zero} \} \\
&= - \inf \Bigl\{ \frac{1}{T} \int_0^T L(\gamma(t),\gamma'(t)) \, dt \, \Big| \, \gamma\in C^{\infty}(\R/T\Z,M) \mbox{ homologous to zero, } T>0\Bigr\}, \\
c_u(L)&:=\inf\{\kappa\in\R \, | \,\SSS_{\kappa}(x,T)\geq 0 \; \forall (x,T)\in \MM \mbox{ with }x\mbox{ contractible} \}\\
&= - \inf \Bigl\{ \frac{1}{T} \int_0^T L(\gamma(t),\gamma'(t)) \, dt \, \Big| \, \gamma\in C^{\infty}(\R/T\Z,M) \mbox{ contractible, } T>0 \Bigr\}, \\
e_0(L)&:=\max_{x\in M} E(x,0) = \max \set{E(x,v)}{(x,v) \in \Crit E}.
\end{split} \end{equation*}
The number $c_0(L)$ is known as the {\em strict Ma\~n\'e critical value}, while $c_u(L)$ is the {\em lowest Ma\~{n}\'e critical value} (see \cite{man97}). When the fundamental group of $M$ is rich, there are other Ma\~n\'e critical values, which are associated to the different coverings to $M$, but the above ones are those which are more relevant for the question of existence of periodic orbits.
It is easy to see that
\[
\min E\leq e_0(L)\leq c_u(L)\leq c_0(L).
\]
When $L$ has the form \eqref{elmag}, $\min E$ is the minimum of the scalar potential $V$, while $e_0(L)$ is its maximum. When the magnetic potential $\theta$ vanishes, the identities $e_0(L)=c_u(L)=c_0(L)$ hold, but in general $e_0(L)$ is strictly lower than the other two numbers. The values $c_u(L)$ and $c_0(L)$ coincide when the fundamental group of $M$ is Abelian and, more generally, when it is ameanable (see \cite{fm07}).

The lowest Ma\~{n}\'e critical value $c_u(L)$ plays a decisive role in the geometry of the action functional $\SSS_{\kappa}$, as the next result shows:

\begin{Lemma}\label{Lem:5}
If $\kappa\geq c_u(L)$, then $\SSS_{\kappa}$ is bounded from below on every connected component of $\MM$. If $\kappa<c_u(L)$, then the functional $\SSS_{\kappa}$ is unbounded from below on each connected component of $\MM$.
\end{Lemma}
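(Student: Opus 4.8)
The plan is to exploit the two equivalent descriptions of $c_u(L)$, namely the ``$\SSS_\kappa\ge 0$'' formulation and the infimum of the mean action over contractible loops, together with a scaling argument in the period variable. The statement has two halves: boundedness from below when $\kappa\ge c_u(L)$, and unboundedness when $\kappa<c_u(L)$.

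For the first half I would first treat the contractible component. If $\kappa\ge c_u(L)$, the very definition of $c_u(L)$ gives $\SSS_\kappa(x,T)\ge 0$ for all $(x,T)\in\MM^{\mathrm{contr}}$, so there is nothing to prove there. For a non-contractible component, fix a loop $x_0$ in that component and decompose an arbitrary loop $x$ in the same component as a concatenation with $x_0$ running backwards, so that $x$ is joined to a fixed reference by a contractible correction; more cleanly, I would use the estimate \eqref{lowbdonA}, which gives $\SSS_\kappa(x,T)\ge \frac{L_0}{T}\ell(x)^2-(L_1-\kappa)T$. This lower bound, as a function of $T>0$ for fixed $\ell(x)$, is minimized at $T=\ell(x)\sqrt{L_0/(L_1-\kappa)}$ (when $L_1>\kappa$; otherwise the bound is already nonnegative), yielding $\SSS_\kappa(x,T)\ge -2\sqrt{L_0(L_1-\kappa)}\,\ell(x)$, which unfortunately still deteriorates as $\ell(x)\to\infty$. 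So \eqref{lowbdonA} alone is not enough on a non-contractible component, and instead I would argue as follows: given $x$ non-contractible, write $\SSS_\kappa(x,T)=\SSS_{c_u(L)}(x,T)+(\kappa-c_u(L))T\ge \SSS_{c_u(L)}(x,T)$, so it suffices to handle $\kappa=c_u(L)$; and for $\kappa=c_u(L)$ one uses that any non-contractible $x$ can be ``completed'' to a contractible loop by appending a fixed loop $y$ in the inverse free-homotopy class (with its fixed period), and the action is additive under concatenation up to a constant depending only on $y$, so $\SSS_{c_u(L)}(x,T)\ge -C(y)>-\infty$ via the contractible bound applied to the concatenation. This is the step I expect to require the most care, since concatenation is not a smooth operation on $W^{1,2}$ and one must check the action estimate for the glued curve; alternatively one can invoke the covering-space characterization of $c_u(L)$ directly.

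For the second half, suppose $\kappa<c_u(L)$. By the infimum formula for $c_u(L)$ there is a smooth contractible loop $\gamma_0:\R/T_0\Z\to M$ with $\frac{1}{T_0}\int_0^{T_0}L(\gamma_0,\gamma_0')\,dt<-\kappa$, i.e.\ $\SSS_\kappa(\gamma_0)=\int_0^{T_0}(L(\gamma_0,\gamma_0')+\kappa)\,dt=-\delta<0$ for some $\delta>0$. Now I would run through this loop $N$ times: the $N$-fold iterate $\gamma_0^{(N)}$ (period $NT_0$) satisfies $\SSS_\kappa(\gamma_0^{(N)})=N\,\SSS_\kappa(\gamma_0)=-N\delta\to-\infty$. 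Since $\gamma_0$ is contractible, all iterates lie in $\MM^{\mathrm{contr}}$, giving unboundedness there. To reach an arbitrary connected component of $\MM$ — indexed by a conjugacy class in $\pi_1(M)$ — I would fix a representative loop $\beta$ in that class (with some fixed period) and form the concatenation of $\beta$ with $\gamma_0^{(N)}$: this lies in the desired component (since $\gamma_0^{(N)}$ is contractible), and $\SSS_\kappa$ of the concatenation equals $\SSS_\kappa(\beta)+\SSS_\kappa(\gamma_0^{(N)})=\SSS_\kappa(\beta)-N\delta\to-\infty$. One should also check one can produce the initial negative-action loop with any prescribed basepoint so that the gluing is legitimate, which follows by translating $\gamma_0$ and, if necessary, reparametrizing; the $W^{1,2}$-regularity of the concatenation is standard since each piece is smooth. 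This completes both directions.

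The main obstacle, as indicated, is the passage from the defining (contractible) property of $c_u(L)$ to a lower bound on the non-contractible components in the first half: one needs a clean way to transport the contractible estimate across components, and the concatenation argument, while morally obvious, needs the elementary but slightly fiddly verification that gluing a fixed smooth loop changes the action by a bounded amount and keeps the curve in $W^{1,2}$ of the correct homotopy type.
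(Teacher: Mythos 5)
Your second half (unboundedness for $\kappa<c_u(L)$) is correct and is essentially the paper's argument: take one contractible loop of negative action, iterate it to drive the action to $-\infty$, and transport it into an arbitrary component by conjugating with a \emph{fixed} connecting path, whose action cost is a constant independent of the number of iterations. In the first half, the contractible case, the reduction to $\kappa=c_u(L)$ via $\SSS_{\kappa}=\SSS_{c_u(L)}+(\kappa-c_u(L))T$, and your observation that (\ref{lowbdonA}) alone cannot give the lower bound are all fine. The gap is in the non-contractible case, and it is not the difficulty you flag (the $W^{1,2}$-regularity of a concatenation is harmless); it is that ``appending a fixed loop $y$ in the inverse free-homotopy class'' does \emph{not} change the action by ``a constant depending only on $y$''.

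To concatenate $x$ with $y$ you must insert a connecting path $c$ from $x(0)$ to $y(0)$ and form $x\# c\# y\# c^{-1}$, and this loop is contractible only if $[c][y][c]^{-1}=[x]^{-1}$ as based classes. As $x$ ranges over a fixed component of $\MM$, its based class at $x(0)$ ranges over an entire conjugacy class of $\pi_1(M)$; the homotopy class of $c$ is therefore forced, and when $\pi_1(M)$ is nonabelian (e.g.\ $M$ a hyperbolic surface and $[x]$ a far conjugate $hg_0h^{-1}$) the shortest admissible $c$ is as long as roughly the distance from a lift of $x(0)$ to the axis of the relevant deck transformation, which is unbounded over the component. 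Since the action cost of traversing $c$ and $c^{-1}$ grows at least linearly in $\ell(c)$, your bound $\SSS_{c_u(L)}(x,T)\geq -C(y)$ does not follow; the argument does close up only when $\pi_1(M)$ is abelian, where a connecting path of length at most $\mathrm{diam}(M)$ always works. Note that the paper's own proof faces exactly the same crux: it closes $\tilde\gamma$ up in the universal cover and asserts that $\mathrm{dist}(\tilde\gamma(T),\tilde\gamma(0))$ is uniformly bounded on the component, which is the delicate point. A standard way to make this airtight is a subadditivity/iteration argument: write $\SSS_{\kappa}(\gamma)\geq\frac1n\SSS_{\kappa}(\gamma^n)$, close up the lift of $\gamma^n$, which runs from $\tilde p$ to $g^n\tilde p$, by a path of length at most $2\,\mathrm{dist}(\tilde p,\tilde q)+n\,\mathrm{dist}(\tilde q,g\tilde q)$ for an auxiliary point $\tilde q$, apply the nonnegativity of $\SSS_{\kappa}$ on contractible loops, and let $n\to\infty$; this yields a lower bound depending only on the conjugacy class. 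Some device of this kind is needed to transport the contractible estimate to the other components; a fixed appended loop does not suffice.
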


\begin{proof}
Choose $\gamma:\R/T\Z\rightarrow M$ in some connected component of the free loop space and let $\tilde\gamma:[0,T]\pf\widetilde M$ be the its lift to the universal covering $\pi:\widetilde M \rightarrow M$. We lift the metric of $M$ to $\widetilde{M}$ and notice that the fact of having fixed the connected component of the free loop space implies that  the quantity $\mathrm{dist}\big(\tilde\gamma(T),\tilde\gamma(0)\big)$ is uniformly bounded. Therefore, there exists a path $\tilde{\alpha}:[0,1]\rightarrow \widetilde M$ which joins $\tilde{\gamma}(T)$ to $\tilde{\gamma}(0)$ and has uniformly bounded action
\[
\widetilde{\A}_{\kappa} (\tilde{\alpha}) := \int_0^1 \Bigl( \widetilde L\bigl(\tilde{\alpha}(t), \tilde{\alpha}'(t) \bigr) + \kappa \Bigr) \, dt \leq C,
\]
where $\widetilde{L}$ denotes the Lagrangian on $T\widetilde M$ which is obtained by lifting $L$. If $\alpha := \pi\circ \tilde{\alpha}$, the juxtaposition $\gamma\# \alpha$ is a contractible loop in $M$. Since $\kappa\geq c_u(L)$, we have
\[
0 \leq \SSS_{\kappa} (\gamma \# \alpha) = \SSS_{\kappa} (\gamma) + \SSS_{\kappa} (\alpha) = \SSS_{\kappa} (\gamma) + \widetilde{\A}_{\kappa} (\tilde{\alpha})
\leq \SSS_{\kappa} (\gamma)+C,
\]
from which $\SSS_{\kappa}(\gamma)\geq -C$.

When $\kappa<c_u(L)$, the functional $\SSS_{\kappa}$ is unbounded from below on each connected component of $\MM$. In fact, if $\alpha$ is a contractible closed curve with $\SSS_{\kappa}(\alpha)<0$, we can modify any closed curve $\gamma$ within its free homotopy class and make it have arbitrarily low action $\SSS_{\kappa}$: Join $\gamma(0)$ to $\alpha(0)$ by some path, wind around $\alpha$ several times, come back to $\gamma(0)$ by the inverse path, and finally go once around $\gamma$.
\end{proof}

The strict Ma\~n\'e critical value $c_0(L)$ is not directly related to the geometry of $\SSS_{\kappa}$, but has the following important characterization (see \cite{fat97} and \cite{cipp98}):
\begin{equation}
\label{cara}
c_0(L) = \inf \left\{ \max_{x\in M} H(x,\alpha(x)) \, \Big| \, \alpha \mbox{ smooth closed one-form on } M\right\},
\end{equation}
where $H:T^*M \rightarrow \R$ is the Hamiltonian associated to the Lagrangian $L$ via Legendre duality:
\[
H(x,p) := \max_{v\in T_x M} \bigl( p[v] - L(x,v) \bigr).
\]
Then $H$ is a Tonelli Hamiltonian, meaning that it is fiberwise superlinear and uniformly convex (see the beginning of Section \ref{sec3}). 
Let $X_H$ be the induced Hamiltonian vector field on $T^*M$, which is defined by the identity
\[
\omega(X_H(z),\zeta) = - dH(z)[\zeta], \quad \forall z\in T^*M, \; \zeta\in T_z T^*M,
\]
where $\omega=dp\wedge dx$ is the standard symplectic form on $T^*M$. The flow of $X_H$ preserves each level $H^{-1}(\kappa)$, where it is conjugated to the Euler-Lagrange flow of $L$ on $E^{-1}(\kappa)$ by the Legendre transform
\[
TM \rightarrow T^*M, \quad (x,v) \mapsto \bigl( x , d_v L(x,v) \bigr).
\]
Assume that $\kappa$ is a regular value of $H$, so that $\Sigma:= H^{-1}(\kappa)$ is a hypersurface. Up to a time reparametrization, the dynamics on $\Sigma$ is determined only by the geometry of $\Sigma$ and not by the Hamiltonian of which $\Sigma$ is an energy level: In fact the nowhere vanishing  vector field $X_H|_{\Sigma}$ belongs to the one-dimensional distribution 
\[
\mathcal{L}_{\Sigma}:=\ker \omega|_{\Sigma},
\]
whose integral curves are hence the orbits of $X_H|_{\Sigma}$.

The characterization (\ref{cara}) has a dynamical and a geometric consequence. We begin with the dynamical one: 

\begin{thm}
\label{finsler}
If $\kappa>c_0(L)$, then the Euler-Lagrange flow on $E^{-1}(\kappa)$ is conjugated up to a time-reparametrization to the geodesic flow which is induced by a Finsler metric on $M$.
\end{thm}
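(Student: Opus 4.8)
The plan is to use the characterization (\ref{cara}) to replace $L$ by a gauge-equivalent Lagrangian whose Hamiltonian is negative on the zero section along the level $\kappa$, then to identify the corresponding energy hypersurface in $T^*M$ with the unit cosphere bundle of a Finsler metric, and finally to observe that the characteristic foliation of that hypersurface carries the co-geodesic flow.

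First, since $\kappa>c_0(L)$, by (\ref{cara}) there is a smooth closed one-form $\alpha$ on $M$ with $\max_{x\in M}H(x,\alpha(x))<\kappa$. The fiberwise translation $\tau_\alpha\colon T^*M\to T^*M$, $(x,p)\mapsto(x,p+\alpha(x))$, pulls the Liouville form back to $p\,dx+\pi^*\alpha$; since $d\alpha=0$ it is a symplectomorphism of $(T^*M,\omega)$, so it conjugates the Hamiltonian flow of $H_\alpha(x,p):=H(x,p+\alpha(x))$ to that of $X_H$ and maps $H_\alpha^{-1}(\kappa)$ onto $H^{-1}(\kappa)$. (Equivalently one replaces $L$ by the gauge-equivalent Lagrangian $L_\alpha:=L-\alpha$, which has the same Euler--Lagrange flow and the same energy $E$ but Hamiltonian $H_\alpha$.) The Hamiltonian $H_\alpha$ is again Tonelli and satisfies $H_\alpha(x,0)=H(x,\alpha(x))<\kappa$ for every $x\in M$.

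Set $\Sigma:=H_\alpha^{-1}(\kappa)$. For each $x$ the fiberwise sublevel $D_x:=\{p\in T^*_xM:H_\alpha(x,p)\le\kappa\}$ is a convex body with $0$ in its interior (because $H_\alpha(x,0)<\kappa$ and $H_\alpha(x,\cdot)$ is superlinear), so its boundary $\Sigma_x=\partial D_x$ is smooth and is met exactly once by every ray issuing from the origin of $T^*_xM$; moreover $\kappa$ is a regular value of $H_\alpha$, since a point where $d_pH_\alpha$ vanishes is the fiberwise minimum of $H_\alpha(x,\cdot)$ and hence lies in $\{H_\alpha<\kappa\}$. Therefore the Minkowski gauge $F^*(x,p):=\inf\{t>0:p/t\in D_x\}$ is a fiberwise positively $1$-homogeneous function, smooth and fiberwise strongly convex away from the zero section (strong convexity of $\Sigma_x$ coming from $d^2_{pp}H_\alpha>0$), with $\{F^*=1\}=\Sigma$; it is the dual of a Finsler metric $F$ on $M$, in the sense that $F^*(x,\cdot)$ is the norm on $T^*_xM$ dual to $F(x,\cdot)$. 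With $H_F:=\tfrac{1}{2}(F^*)^2$ one has $\{H_F=\tfrac{1}{2}\}=\Sigma$, and the Hamiltonian flow of $X_{H_F}$ on $\Sigma$ --- the co-geodesic flow of $F$ --- is, by the definition of the Finsler geodesic flow through Legendre duality, conjugate via the Legendre transform $\mathcal{L}_F\colon TM\to T^*M$ of the Lagrangian $\tfrac{1}{2}F^2$ to the geodesic flow of $F$ on $\{F=1\}\subset TM$.

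It remains to compare $X_{H_\alpha}$ and $X_{H_F}$ on $\Sigma$. Both restrict to nowhere vanishing sections of the line field $\mathcal{L}_\Sigma=\ker\omega|_\Sigma$, and they induce the same coorientation of $\Sigma$ (both Hamiltonians increase when passing from the interior of $D$ to its exterior), so by nondegeneracy of $\omega$ one has $X_{H_\alpha}|_\Sigma=\rho\,X_{H_F}|_\Sigma$ for some positive smooth function $\rho$ on $\Sigma$; thus the flow of $X_{H_\alpha}$ on $\Sigma$ is a time-reparametrization of the co-geodesic flow of $F$. Concatenating the Legendre transform of $L$ (which conjugates the Euler--Lagrange flow on $E^{-1}(\kappa)$ to the flow of $X_H$ on $H^{-1}(\kappa)$, as recalled above), the symplectomorphism $\tau_\alpha$, this reparametrization, and $\mathcal{L}_F$, yields the desired conjugacy up to time-reparametrization between the Euler--Lagrange flow on $E^{-1}(\kappa)$ and the geodesic flow of $F$. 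The genuinely substantial step is the identification carried out in the previous paragraph --- that a fiberwise strictly convex hypersurface surrounding the zero section of $T^*M$ is the unit cosphere bundle of a Finsler metric and that its characteristic foliation is the co-geodesic foliation; the remaining steps are routine bookkeeping with Legendre transforms and symplectomorphisms.
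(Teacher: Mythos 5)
Your proof is correct and follows essentially the same route as the paper: translate by a closed one-form realizing the characterization \eqref{cara}, observe that the translated level set bounds a fiberwise uniformly convex set containing the zero section, identify it with the unit cosphere bundle of a Finsler metric via a fiberwise homogeneous Hamiltonian, and invoke the characteristic line field $\ker\omega|_{\Sigma}$ to get the conjugacy up to time-reparametrization. The only (cosmetic) difference is that you build the $1$-homogeneous Minkowski gauge and square it, whereas the paper directly posits a $2$-homogeneous $F$ with $F^{-1}(1)=K^{-1}(\kappa)$; your version also spells out a few details (regularity of the value $\kappa$, the coorientation argument) that the paper leaves implicit.
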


\begin{proof}
Since $\kappa>c_0(L)$, there is a smooth closed one-form $\alpha$ whose image is contained in the sublevel $\{H<\kappa\}$. Since $\alpha$ is closed, the diffeomorphism of $T^*M$ defined by $(x,p) \mapsto (x,p+\alpha(x))$ is symplectic and conjugates the Hamiltonian flow of $H$ to that of $K(x,p):= H(x,p+\alpha(x))$. The energy level $K^{-1}(\kappa)$ is now the boundary
of a fiberwise uniformly convex bounded open set which contains the zero section of $T^*M$. Therefore, there exists a fiberwise convex and 2-homogeneous function $F:T^*M \rightarrow [0,+\infty)$ such that $F^{-1}(1)=K^{-1}(\kappa)$. Thus, the Hamiltonian flow of $F$ on $F^{-1}(1)=K^{-1}(\kappa)$ is related to that of $K$ - hence to that of $H$ on $H^{-1}(\kappa)$ - by a time reparametrization. But the Legendre dual of the fiberwise convex and 2-homogeneous Hamiltonian $F$ is the square of a Finsler structure on $M$. We conclude that the orbits of the Euler-Lagrange flow of $L$ of energy $\kappa$ are reparametrized Finsler geodesics.
\end{proof}

In order to derive the geometric consequence of (\ref{cara}), we need to recall some notions from symplectic topology.

\medskip

\paragraph{\bf Contact type and stable energy hypersurfaces} 
The energy level $\Sigma$ is said to be of {\em contact type} if there is a one-form $\eta$ on $\Sigma$ which is a primitive of $\omega|_{\Sigma}$ and is such that $\eta$ does not vanish on $\mathcal{L}_{\Sigma}$. Equivalently, there is a smooth vector field $Y$ in a neighborhood of $\Sigma$ which is transverse to $\Sigma$ and such that $L_Y \omega = \omega$ (the vector field $Y$ and the one-form $\eta$ are related by the identity $\imath_Y \omega|_{\Sigma} = \eta$). The energy level $\Sigma$ is said to be of {\em restricted contact type} if the one-form $\eta$ extends to a primitive of $\omega$ on the whole $T^*M$. 

If the surface $\Sigma\subset T^*M$ bounds an open fiberwise convex set which contains the zero-section (or more generally an open set which is starshaped with respect to the zero section), then it is of restricted contact type: As $\eta$ one can take the Liouville form $p\, dq$. Therefore, arguing as in the first part of the proof of Theorem \ref{finsler}, we obtain the following geometric consequence of the characterization (\ref{cara}):

\begin{thm}
\label{contact}
If $\kappa>c_0(L)$, then $H^{-1}(\kappa)$ is of restricted contact type.
\end{thm}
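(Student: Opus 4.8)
The plan is to deduce Theorem \ref{contact} directly from the characterization \eqref{cara} of $c_0(L)$ together with the first part of the proof of Theorem \ref{finsler}. Since $\kappa>c_0(L)$, the infimum in \eqref{cara} being strictly below $\kappa$ produces a smooth closed one-form $\alpha$ on $M$ with $\max_{x\in M}H(x,\alpha(x))<\kappa$; equivalently, the graph $\{(x,\alpha(x))\mid x\in M\}\subset T^*M$ lies inside the open sublevel $\{H<\kappa\}$. Because $\alpha$ is closed, the fiber-translation map $\tau_\alpha:T^*M\to T^*M$, $(x,p)\mapsto(x,p+\alpha(x))$, is a symplectomorphism (it pulls back $p\,dq$ to $p\,dq+\pi^*\alpha$, whose differential is $\omega$), and it conjugates the Hamiltonian flow of $H$ to that of $K:=H\circ\tau_\alpha$, with $\tau_\alpha^{-1}(H^{-1}(\kappa))=K^{-1}(\kappa)$.

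Next I would observe that $K$ is again a Tonelli Hamiltonian (composition with a fiber translation preserves fiberwise uniform convexity and superlinearity) and that $K(x,0)=H(x,\alpha(x))<\kappa$ for every $x$, so the zero section lies in $\{K<\kappa\}$. Since $\kappa$ is a regular value of $K$ (it is a regular value of $H$, being $>c_0(L)\geq e_0(L)$, and $\tau_\alpha$ is a diffeomorphism) and $K$ is fiberwise convex with $K(x,0)<\kappa$, the level $K^{-1}(\kappa)$ is the boundary of the bounded fiberwise convex open set $\{K<\kappa\}$, which contains the zero section; in particular this set is starshaped with respect to the zero section. By the remark recalled just before the statement, such a hypersurface is of restricted contact type, with the global Liouville form $\lambda=p\,dq$ on $T^*M$ restricting to a contact primitive of $\omega|_{K^{-1}(\kappa)}$ (one checks $\lambda$ is nonvanishing on $\mathcal{L}_{K^{-1}(\kappa)}$ because the radial vector field $E=p\,\partial_p$ is outward-transverse to the starshaped level and $\imath_E\omega=\lambda$ there).

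Finally I would transport this structure back through $\tau_\alpha$: the one-form $\tau_\alpha^*\lambda=\lambda+\pi^*\alpha$ is a globally defined primitive of $\omega$ on $T^*M$ (since $\alpha$ is closed), and its restriction to $H^{-1}(\kappa)=\tau_\alpha(K^{-1}(\kappa))$ is a contact primitive of $\omega|_{H^{-1}(\kappa)}$ because $\tau_\alpha$ is a symplectomorphism carrying $\mathcal{L}_{K^{-1}(\kappa)}$ to $\mathcal{L}_{H^{-1}(\kappa)}$. Hence $H^{-1}(\kappa)$ is of restricted contact type.

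The computations are all routine; the only real content is the structural point that \eqref{cara} is exactly what is needed to translate the energy level into a starshaped (indeed fiberwise convex) position by a closed-one-form shift, after which the Liouville form does the job. The step deserving the most care is verifying that the transported primitive genuinely satisfies the contact condition on $\mathcal{L}_\Sigma$ rather than merely being a primitive of $\omega|_\Sigma$ — this is where one uses that a starshaped hypersurface is transverse to the radial Liouville vector field, so that no orbit direction lies in the kernel of $\lambda$.
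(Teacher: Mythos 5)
Your proof is correct and is essentially the paper's own argument: use the characterization \eqref{cara} to find a closed one-form $\alpha$ with graph in $\{H<\kappa\}$, shift by the fiberwise translation to put the level in fiberwise convex (hence starshaped) position around the zero section, and invoke the Liouville form $p\,dq$ as the global primitive, exactly as indicated in the first part of the proof of Theorem \ref{finsler}. The only slip is the direction of the transport in the last step: since $\tau_\alpha$ maps $K^{-1}(\kappa)$ onto $H^{-1}(\kappa)$, the primitive to restrict to $H^{-1}(\kappa)$ is $(\tau_\alpha^{-1})^*\lambda=\lambda-\pi^*\alpha$ rather than $\tau_\alpha^*\lambda$, which changes nothing of substance.
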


 If $c_u(L) \leq \kappa \leq c_0(L)$ and $M$ is not the 2-torus, $H^{-1}(\kappa)$ is not of contact type (see \cite[Proposition B.1]{con06}), and it is conjectured that the same is true for $e_0(L)<\kappa<c_u(L)$. If $\min E < \kappa < e_0(L)$, $H^{-1}(\kappa)$ might or might not be of contact type: For instance, if the one-form $\theta(x)[v]:=d_vL(x,0)[v]$ is closed, then every regular energy level is of contact type (see \cite[Proposition C.2]{con06}, in this case $e_0(L)=c_u(L)=c_0(L)$).

The contact condition has the following dynamical consequence: If $\Sigma$ is a contact type compact hypersurface in a symplectic manifold $(W,\omega)$ (in our case, $W=T^*M$), then there is a diffeomorphism
\[
(-\epsilon, \epsilon) \times \Sigma \rightarrow W, \qquad (r,x) \mapsto \psi_r(x),
\]
onto an open neighborhood of $\Sigma$ such that $\psi_0$ is the identity on $\Sigma$ and 
\[
\psi_r : \Sigma \rightarrow \Sigma_r := \psi_r(\Sigma)
\]
induces an isomorphism between the line bundles $\mathcal{L}_{\Sigma}$ and $\mathcal{L}_{\Sigma_r}$ (the hypersurface $\Sigma_r$ is the image of $\Sigma$ by the flow at time $r$ of the vector field $Y$ given by the contact condition, see \cite[page 122]{hz94}). Therefore, if the hypersurfaces $\Sigma_r$ are level sets of a Hamiltonian $K$, the dynamics of $X_K$ on $\Sigma_r$ is conjugate to the one on $\Sigma_0=\Sigma$ up to a time reparametrization.

Hypersurfaces with the above propery are called {\em stable} (see \cite[page 122]{hz94}). The stability condition is weaker than the contact condition, as the following characterization, which is due to K.\ Cieliebak and K.\ Mohnke \cite[Lemma 2.3]{cm05}, shows:

\begin{Prop}   
Let $\Sigma$ be a compact hypersurface in the symplectic manifold $(W,\omega)$. Then the following facts are equivalent:
\begin{enumerate}
\item $\Sigma$ is stable;
\item there is a vector field $Y$ on a neighborhood of $\Sigma$ which is transverse to $\Sigma$ and satisfies $\mathcal{L}_{\Sigma} \subset \ker ( L_Y \omega|_{\Sigma})$;
\item there is a one-form $\eta$ on $\Sigma$ such that $\mathcal{L}_{\Sigma} \subset \ker d\eta$ and $\eta$ does not vanish on $\mathcal{L}_{\Sigma}$.
\end{enumerate}
\end{Prop}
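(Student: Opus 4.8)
The plan is to prove the equivalence of the three conditions by a cyclic chain, or rather by pairing them through the correspondence between transverse vector fields near $\Sigma$ and one-forms on $\Sigma$. The key bridge is the observation that a vector field $Y$ transverse to $\Sigma$ determines a one-form $\eta := \imath_Y\omega|_\Sigma$ on $\Sigma$, and conversely, given any one-form $\eta$ on $\Sigma$ one can find (non-uniquely) a transverse $Y$ inducing it, since transversality of $Y$ to $\Sigma$ is equivalent to $\eta$ being non-degenerate on the characteristic line field $\LLL_\Sigma = \ker\omega|_\Sigma$ (because $T\Sigma = \ker\eta'$ for the appropriate hyperplane only when $\eta$ pairs nontrivially with $\LLL_\Sigma$; more precisely, $\omega|_\Sigma$ has one-dimensional kernel $\LLL_\Sigma$, and the contraction $\imath_Y\omega$ restricted to $T\Sigma$ sees $Y$ only through its component transverse to $\Sigma$). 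So (ii) $\Leftrightarrow$ (iii) should follow once I relate $L_Y\omega|_\Sigma$ to $d\eta$.

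The computational heart is Cartan's formula: $L_Y\omega = d\imath_Y\omega + \imath_Y d\omega = d\imath_Y\omega$ since $\omega$ is closed. Restricting to $\Sigma$ and using that pullback commutes with $d$, we get $L_Y\omega|_\Sigma = d(\imath_Y\omega|_\Sigma) = d\eta$. Hence the condition $\LLL_\Sigma \subset \ker(L_Y\omega|_\Sigma)$ in (ii) is literally $\LLL_\Sigma \subset \ker d\eta$, which is half of (iii); the other half, that $\eta$ does not vanish on $\LLL_\Sigma$, is exactly the restatement of transversality of $Y$ to $\Sigma$. This gives (ii) $\Leftrightarrow$ (iii) cleanly, modulo checking that for any $\eta$ satisfying (iii) there genuinely exists an extension to a transverse vector field on a neighborhood — a partition-of-unity / local-extension argument using that $\eta$ non-vanishing on $\LLL_\Sigma$ forces any $Y$ with $\imath_Y\omega|_\Sigma=\eta$ to have nonzero normal component.

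For (i) $\Leftrightarrow$ (ii): given a stabilizing family $\psi_r$ as in the definition of stable, one differentiates at $r=0$ to produce the transverse vector field $Y := \frac{\partial}{\partial r}\big|_{r=0}\psi_r$; the condition that $\psi_r$ induces isomorphisms $\LLL_\Sigma \cong \LLL_{\Sigma_r}$ should translate, upon differentiation, into $\LLL_\Sigma \subset \ker(L_Y\omega|_\Sigma)$, giving (i) $\Rightarrow$ (ii). Conversely, given $Y$ as in (ii), one integrates its flow to obtain the hypersurfaces $\Sigma_r$ and the diffeomorphism $\psi_r$, and one must verify that the flow of $Y$ carries $\LLL_{\Sigma}$ to $\LLL_{\Sigma_r}$; this uses that the flow of $Y$ transports $\omega$ in a way controlled by $L_Y\omega$, and that $\LLL_{\Sigma_r} = \ker(\omega|_{\Sigma_r})$ is determined by $\omega$ restricted there — the hypothesis $\LLL_\Sigma\subset\ker(L_Y\omega|_\Sigma)$ is precisely what guarantees the characteristic line field is preserved infinitesimally, hence along the flow.

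I expect the main obstacle to be the careful bookkeeping in (i) $\Leftrightarrow$ (ii): matching "the line bundle isomorphism induced by $\psi_r$" with the infinitesimal statement about $L_Y\omega$, and in the converse direction showing the flow of $Y$ really does preserve the characteristic distribution rather than merely its kernel pointwise at $r=0$. One must be slightly attentive that "isomorphism of line bundles" in the definition of stable is a condition on the push-forward $(\psi_r)_*$ sending $\LLL_\Sigma$ to $\LLL_{\Sigma_r}$, and that this is an integrated (all $r$) statement while (ii) is infinitesimal; the passage from the differential condition to the integrated one is a standard Gronwall/flow argument but deserves a line of justification. The (ii) $\Leftrightarrow$ (iii) part, by contrast, is essentially a one-line application of Cartan's magic formula plus the dictionary between transverse vector fields and non-degenerate one-forms, and should not present difficulty. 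I would also remark that the contact condition is recovered by strengthening "$\LLL_\Sigma \subset \ker d\eta$" to "$d\eta = \omega|_\Sigma$", making the weaker nature of stability manifest.
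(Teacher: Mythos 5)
Your handling of (ii) $\Leftrightarrow$ (iii) is correct and coincides with the paper's: Cartan's formula gives $L_Y\omega|_{\Sigma}=d\bigl(\imath_Y\omega|_{\Sigma}\bigr)$, and transversality of $Y$ corresponds exactly to non-vanishing of $\eta=\imath_Y\omega|_{\Sigma}$ on $\mathcal{L}_{\Sigma}$; for the converse direction note that the condition $\imath_Y\omega|_{T\Sigma}=\eta$ determines $Y$ along $\Sigma$ up to sections of $(T\Sigma)^{\omega}=\mathcal{L}_{\Sigma}$, and any such $Y$ is automatically transverse precisely because $\eta$ does not vanish on $\mathcal{L}_{\Sigma}$. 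Likewise (i) $\Rightarrow$ (ii) by differentiating the stabilizing family at $r=0$ is the paper's argument.

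The genuine gap is in your (ii) $\Rightarrow$ (i). You propose to integrate the flow $\phi_t$ of $Y$ and assert that the infinitesimal condition $\mathcal{L}_{\Sigma}\subset\ker(L_Y\omega|_{\Sigma})$ propagates along the flow by a ``standard Gronwall argument''. It does not. Setting $\omega_t:=(\phi_t^*\omega)|_{\Sigma}$, one has $\dot\omega_t=(\phi_t^*L_Y\omega)|_{\Sigma}$; the hypothesis controls only $\dot\omega_0$, whereas keeping $\ker\omega_t=\mathcal{L}_{\Sigma}$ for $t\neq 0$ would require $\mathcal{L}_{\Sigma_t}\subset\ker(L_Y\omega|_{\Sigma_t})$ for all $t$, which is not assumed and does not follow. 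Indeed $(L_Y\omega)|_{\Sigma}=d(\imath_Y\omega|_{\Sigma})$ depends only on the restriction $Y|_{\Sigma}$, while the slices $\Sigma_t$ and the forms $\omega_t$ for $t\neq 0$ depend on $Y$ away from $\Sigma$: perturbing $Y$ off $\Sigma$ preserves hypothesis (ii) but generically destroys $\ker\omega_t=\mathcal{L}_{\Sigma}$ already at second order in $t$, so the flow of the given $Y$ need not furnish a stabilizing family. This is exactly why the paper closes the cycle by proving (iii) $\Rightarrow$ (i) instead: from $\eta$ one builds the model symplectic form $\tilde\omega=\omega|_{\Sigma}+d(r\eta)$ on $(-\epsilon,\epsilon)\times\Sigma$, whose restriction to the slice $\{r\}\times\Sigma$ is $\omega|_{\Sigma}+r\,d\eta$ and hence has kernel equal to $\mathcal{L}_{\Sigma}$ for \emph{every} $r$ because $\mathcal{L}_{\Sigma}\subset\ker d\eta$, and then identifies a neighborhood of $\Sigma$ in $W$ with this model via Gotay's coisotropic neighborhood theorem. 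The stabilizing diffeomorphisms $\psi_r$ come from that identification, not from the flow of $Y$. To repair your argument you would either have to reproduce this normal-form step, or show that $Y$ can be modified off $\Sigma$ so that (ii) holds on all nearby slices --- which amounts to the same construction.
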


\begin{proof}
(i) $\Rightarrow$ (ii). By stability, a neighborhood of $\Sigma$ can be identified with $(-\epsilon,\epsilon)\times \Sigma$ in such a way that $\mathcal{L}_{\{r\} 
\times \Sigma}$ does not depend on $r$. Set $Y:= \partial/\partial r$ and denote by $\phi_t(r,x)=(r+t,x)$ its flow. Then $\ker ( \phi_t^* \omega|_{\{0\} \times \Sigma} )$ does not depend on $t$ and differentiating in $t$ at $t=0$ we get
\[
\mathcal{L}_{\Sigma} = \ker \omega|_{\Sigma} \subset \ker ( L_Y \omega|_{\Sigma}).
\]
(ii) $\Rightarrow$ (iii). If we set $\eta:= \imath_Y \omega|_{\Sigma}$, by Cartan's identity we have
\[
d\eta = d \imath_Y \omega|_{\Sigma} = (L_Y \omega - \imath_Y d \omega)|_{\Sigma} =  L_Y \omega|_{\Sigma},
\]
so $\mathcal{L}_{\Sigma} \subset \ker(L_Y \omega|_{\Sigma}) = \ker d\eta$. If $\xi\neq 0$ is a vector in $\mathcal{L}_{\Sigma}$, then
\[
\eta(\xi) = \omega(Y,\xi) \neq 0,
\]
because $Y\notin T\Sigma$.

\noindent (iii) $\Rightarrow$ (i). Consider the closed two-form on $(-\epsilon,\epsilon) \times \Sigma$
\[
\tilde{\omega} = \omega|_{\Sigma} + d(r\eta) = \omega|_{\Sigma} + rd\eta + dr\wedge \eta.
\]
If $\epsilon$ is small enough, the form $\omega|_{\Sigma} + rd\eta$ is non-degenerate on $\ker \eta$ for every $r\in (-\epsilon,\epsilon)$, from which we deduce that $\tilde{\omega}$ is a symplectic form. Since $\tilde{\omega}|_{\{0\} \times \Sigma}$ coincides with $\omega|_{\Sigma}$, by the coisotropic neighborhood theorem (see \cite{got82}, or \cite[Exercise 3.36]{ms98} for the particular case of a hypersurface), a neighborhood of $\Sigma$ in $W$ is symplectomorphic to $((-\epsilon,\epsilon) \times \Sigma,\tilde{\omega})$, up to the choice of a smaller $\epsilon$. Since for $\xi\in \mathcal{L}_{\Sigma}(x)$ and $\zeta \in T_{(r,x)} (\{r\} \times \Sigma) = (0) \times T_x \Sigma$ there holds
\[
\tilde{\omega}(\xi,\zeta) = \omega(\xi,\zeta) + r d\eta (\xi,\zeta) = 0,
\]
we deduce that $\ker (\tilde{\omega}|_{\{r\} \times \Sigma}) = \mathcal{L}_{\Sigma}$ does not depend on $r$. Therefore, $\{0\} \times \Sigma$ is stable in $((-\epsilon,\epsilon) \times \Sigma,\tilde{\omega})$ and hence $\Sigma$ is stable in $(W,\omega)$.
\end{proof}

\begin{Rmk}
L.~Macarini and G.~P.~Paternain have constructed examples of Tonelli Lagrangians on the tangent bundle of $\mathbb{T}^n$ such that $H^{-1}(\kappa)$ is stable for $\kappa=c_u(L)=c_0(L)$, see \cite{mp10}.
\end{Rmk}

\section{Palais-Smale sequences}
\label{pss}

Palais-Smale sequences $(x_h,T_h)$ with $T_h\rightarrow 0$ are a possible source of non-compactness, but the next result shows that they occur only at level zero.

\begin{Lemma}
\label{PST0}
Let $(x_h,T_h)$ be a $(\PS)_c$ sequence for $\SSS_{\kappa}$ with $T_h\to0$. Then $c=0$.
\end{Lemma}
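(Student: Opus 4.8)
The plan is to exploit the two facts already assembled for sequences with vanishing period: the lower bound from the Tonelli growth condition (Lemma~\ref{Lem:3}(ii)) gives $\liminf_h \SSS_{\kappa}(x_h,T_h)\geq 0$, so it remains to show $\limsup_h \SSS_{\kappa}(x_h,T_h)\leq 0$; and the $T$-derivative estimate \eqref{boundsondAdt}, namely $\SSS_{\kappa}(x,T)\leq \tfrac{T}{C_0}\bigl(C-\tfrac{\partial \SSS_{\kappa}}{\partial T}(x,T)\bigr)$, which was derived purely from the quadratic-at-infinity behaviour of $L$ and $E$ and hence applies verbatim here. The key new input is that $(x_h,T_h)$ is Palais--Smale rather than a downward flow line, so instead of knowing $\partial \SSS_{\kappa}/\partial T\geq 0$ along a subsequence (as in Lemma~\ref{Lem:4}) I know that the full differential $d\SSS_{\kappa}(x_h,T_h)$ is infinitesimal in the dual norm of the product metric on $\MM$.

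First I would unwind what ``$\|d\SSS_{\kappa}(x_h,T_h)\|\to 0$'' says about the $T$-component: testing the differential against the unit vector $(0,1)$ in the $(0,+\infty)$ factor (which has norm $1$ in the Euclidean metric on that factor, hence norm $1$ in the product metric) gives
\[
\Bigl| \frac{\partial \SSS_{\kappa}}{\partial T}(x_h,T_h) \Bigr| \leq \bigl\| d\SSS_{\kappa}(x_h,T_h)\bigr\| \longrightarrow 0 .
\]
So $\partial \SSS_{\kappa}/\partial T (x_h,T_h)$ is infinitesimal. Plugging this into \eqref{boundsondAdt} yields
\[
\SSS_{\kappa}(x_h,T_h) \leq \frac{T_h}{C_0}\Bigl( C - \frac{\partial \SSS_{\kappa}}{\partial T}(x_h,T_h)\Bigr),
\]
and since $T_h\to 0$ while the bracket stays bounded, the right-hand side tends to $0$; hence $\limsup_h \SSS_{\kappa}(x_h,T_h)\leq 0$. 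Combining with Lemma~\ref{Lem:3}(ii), $\SSS_{\kappa}(x_h,T_h)\to 0$, and since by hypothesis $\SSS_{\kappa}(x_h,T_h)\to c$, we conclude $c=0$.

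One should check two small points. The sequence must eventually lie in $\MM^{\mathrm{contr}}$: indeed by Lemma~\ref{Lem:3}(i), on $\MM^{\mathrm{noncontr}}$ one has $\SSS_{\kappa}(x_h,T_h)\to +\infty$ when $T_h\to 0$, which contradicts convergence to a finite level $c$; so only finitely many terms can be non-contractible and we may discard them, making Lemma~\ref{Lem:3}(ii) applicable. Also, the estimate $E(x,v)\geq C_0 L(x,v)-C_1$ underlying \eqref{boundsondAdt} holds globally because $L$ and $E$ are genuinely quadratic in $v$ outside a compact set of $TM$ and continuous on that compact set. The main (and essentially only) obstacle is the bookkeeping of the dual norm on the product manifold $\MM$ — making sure that control of the full gradient really descends to control of $\partial \SSS_{\kappa}/\partial T$ alone — but this is immediate from the product structure of the metric, so the proof is short.
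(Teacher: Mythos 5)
Your proof is correct and rests on the same key observation as the paper's: the Palais--Smale condition forces $\partial\SSS_{\kappa}/\partial T(x_h,T_h)\to 0$, and this together with the quadratic comparability of $E$ and $L$ pins $\SSS_{\kappa}(x_h,T_h)$ to $O(T_h)$. The only difference is organizational — you reuse the ready-made inequality \eqref{boundsondAdt} and Lemma \ref{Lem:3} for the two one-sided bounds, whereas the paper rederives the kinetic-energy estimate $\int_0^{T_h}|\gamma_h'|^2\,dt=O(T_h)$ and sandwiches the action directly — so this is essentially the paper's argument in slightly streamlined form.
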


\begin{proof}
Set $\gamma_h(t):= x_h(t/T_h)$.
Since $(x_h,T_h)$ is a $(\PS)$ sequence, the identity (\ref{eq:diff A w.r.t. T}) shows that the sequence
\[
\alpha_h := \frac{1}{T_h} \int_0^{T_h} \bigl( E(\gamma_h,\gamma_h') - \kappa \bigr) \, dt = - \frac{\partial \SSS_{\kappa}}{\partial T} (x_h,T_h)
\]
is infinitesimal and, in particular, bounded. Since $L(x,v)$ is electromagnetic for $|v|$ large, $E(x,v)$ has the form (\ref{ene}) for $|v|$ large and hence satisfies the estimate
\[
E(x,v) \geq E_0 |v|^2 - E_1, \qquad \forall (x,v)\in TM,
\]
for suitable numbers $E_0>0$ and $E_1\in \R$. It follows that
\[
\alpha_h \geq \frac{1}{T_h} \int_0^{T_h} \bigl( E_0 |\gamma_h'|^2 - E_1 - \kappa \bigr)\, dt = \frac{E_0}{T_h} \int_0^{T_h} |\gamma_h'|^2\,dt - (E_1 + \kappa),
\]
and hence
\[
\int_0^{T_h}  |\gamma_h'|^2\,dt \leq \frac{T_h}{E_0} ( \alpha_h + E_1 + \kappa) = O(T_h)
\]
for $h\rightarrow \infty$. From the lower bound (\ref{boundsonL}) and from the analogous upper bound
\[
L(x,v) \leq L_2 |v|^2 + L_3, \qquad \forall (x,v)\in TM,
\]
for suitable positive number $L_2,L_3$, we find
\[
\SSS_{\kappa}(x_h,T_h) = \int_0^{T_h} \bigl( L(\gamma_h,\gamma_h') + \kappa \bigr)\, dt \geq L_0 \int_0^{T_h} |\gamma_h'|^2\, dt + T_h (\kappa - L_1) = O(T_h)
\]
and
\[
\SSS_{\kappa}(x_h,T_h) = \int_0^{T_h} \bigl( L(\gamma_h,\gamma_h') + \kappa \bigr)\, dt \leq L_2 \int_0^{T_h} |\gamma_h'|^2\, dt + T_h (L_3 + \kappa) = O(T_h)
\]
for $h\rightarrow \infty$. Since $T_h\rightarrow 0$, we conclude that $\SSS_{\kappa}(x_h,T_h)$ is infinitesimal and hence $c=0$.
\end{proof}

\begin{Rmk}
By choosing a suitable metric on the Hilbert manifold $\mathcal{M}$, one can also obtain that for every Palais-Smale sequences $(x_h,T_h)$ with $T_h\rightarrow 0$ the sequence $(x_h)$ converges to an equilibrium solution of the Euler-Lagrange equations which has energy $\kappa$ (see \cite[Proposition 3.8]{con06}). In particular, such Palais-Smale sequences can exist only when $\kappa$ is a critical value of $E$.
\end{Rmk}

The next result says that Palais-Smale sequences $(x_h,T_h)$ with $(T_h)$ bounded and bounded away from zero are always compact.

\begin{Lemma}\label{Lem;2}
Let $(x_h,T_h)$ be a $(\PS)_c$ sequence for $\SSS_{\kappa}$ with $0<T_*\leq T_h\leq T^*<\infty$. Then $(x_h,T_h)$ is compact in $\MM$.
\end{Lemma}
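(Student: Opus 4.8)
The plan is to show that a Palais--Smale sequence $(x_h,T_h)$ with $0<T_*\le T_h\le T^*<\infty$ has a subsequence converging in $\MM=W^{1,2}(\T,M)\times(0,+\infty)$. Since the $T_h$ lie in a compact subinterval of $(0,+\infty)$, after passing to a subsequence we may assume $T_h\to T\in[T_*,T^*]$, so the real issue is the convergence of $(x_h)$ in $W^{1,2}(\T,M)$. First I would establish a uniform $W^{1,2}$ bound on $(x_h)$. From the lower bound (\ref{boundsonL}), writing $\gamma_h(t)=x_h(t/T_h)$, we have
\[
\SSS_{\kappa}(x_h,T_h) \ge \frac{L_0}{T_h}\int_{\T}|x_h'|^2\,ds - (L_1-\kappa)T_h,
\]
and since $\SSS_{\kappa}(x_h,T_h)\to c$ and $T_h$ is bounded above and away from zero, this gives $\int_{\T}|x_h'|^2\,ds\le C$; hence $(x_h)$ is bounded in $W^{1,2}(\T,M)$ (the sup norm is controlled by Sobolev embedding after fixing a base point, using compactness of $M$).

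Next I would extract a weakly convergent subsequence: $x_h\rightharpoonup x$ in $W^{1,2}(\T,\R^n)$ in a chart (after embedding $M\subset\R^N$ it is cleaner to work in $\R^N$), hence $x_h\to x$ uniformly by compactness of the embedding $W^{1,2}(\T)\hookrightarrow C(\T)$, so $x$ is again a loop in $M$ and for large $h$ all $x_h$ lie in a single coordinate chart $\varphi_*$ around $x$. The heart of the argument is then to upgrade weak convergence to strong $W^{1,2}$ convergence using the Palais--Smale condition. Working in the chart, write $\zeta_h$ for the representatives of $x_h$; the gradient bound $\|d\SSS_{\kappa}(x_h,T_h)\|\to 0$ means in particular that $d\SSS_{\kappa}(x_h,T_h)[(\xi_h,0)]\to 0$ for the tangent vector $\xi_h$ corresponding to $\zeta_h-\zeta$ (which is bounded in $W^{1,2}$). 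Using the formula (\ref{diff1}),
\[
d\SSS_{\kappa}(x_h,T_h)\bigl[(\xi_h,0)\bigr] = \int_0^{T_h}\Bigl( d_xL(\gamma_h,\gamma_h')[\zeta_h^\#] + d_vL(\gamma_h,\gamma_h')[\nabla_t\zeta_h^\#]\Bigr)\,dt,
\]
the horizontal term $\int d_xL(\gamma_h,\gamma_h')[\cdot]$ converges to zero because of uniform convergence of $x_h$ and $L^2$-boundedness of $x_h'$, so the vertical term must also vanish in the limit. By the fiberwise uniform convexity (\ref{bd2L}), the bilinear form $(u,u')\mapsto\int d_{vv}L(\gamma_h,\gamma_h')[u,u']$ is uniformly coercive on $L^2$, and a standard convexity/monotonicity estimate of the type
\[
2L_0\int_{\T}\frac{|x_h'-x'|^2}{T_h^2}\,ds \le \int_0^{T_h}\bigl( d_vL(\gamma_h,\gamma_h') - d_vL(\gamma_h,\tfrac{x'}{T_h}\!\circ)\bigr)[\nabla_t\zeta_h^\# - \nabla_t\zeta^\#]\,dt + o(1)
\]
forces $\|x_h'-x'\|_{L^2}\to 0$; combined with the uniform convergence this yields $x_h\to x$ in $W^{1,2}(\T,M)$, and with $T_h\to T>0$ we get convergence in $\MM$.

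I expect the main obstacle to be the convexity estimate that converts the smallness of the differential into strong convergence of the derivatives: one has to be careful that the ``comparison curve'' $v=x'/T_h$ used inside $d_vL$ is measurable and bounded, that the error terms coming from the difference $L$ versus its electromagnetic approximation at large $|v|$ are handled (here the quadratic upper bound $L(x,v)\le L_2|v|^2+L_3$ and the lower bound (\ref{bd2L}) on $d_{vv}L$ are what make it work uniformly in $h$), and that passing between the loop $\gamma_h$ on $[0,T_h]$ and its reparametrization $x_h$ on $\T$ does not introduce $T_h$-dependence that degenerates --- but this is exactly where the hypothesis $T_*\le T_h\le T^*$ is used. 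Everything else (boundedness, weak compactness, the vanishing of the horizontal term) is routine given the regularity statements already recorded for $\SSS_{\kappa}$.
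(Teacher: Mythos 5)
Your proposal is correct and follows essentially the same route as the paper's proof: a uniform $W^{1,2}$ bound from \eqref{boundsonL}, uniform convergence (the paper gets it via equi-H\"older continuity and Ascoli--Arzel\`a rather than weak compactness plus the compact embedding, but this is immaterial), localization in a single chart, testing the differential against $\xi_h-\xi$ so that the horizontal term vanishes by the growth bound on $d_x\widetilde L$, and finally the fiberwise uniform convexity estimate combined with the pairing of the strongly $L^2$-convergent sequence $d_v\widetilde L(s,\xi_h,\xi'/T_h)$ against the weakly null sequence $\xi_h'-\xi'$ to upgrade to strong $W^{1,2}$ convergence. The only point worth tightening is that the comparison argument does not need $x'/T_h$ to be bounded, only the linear growth bound on $d_v\widetilde L$ and the strong--weak duality just described.
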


\begin{proof}
Up to a subsequence, we may assume that $(T_h)$ converges to some $T\in [T_*,T^*]$.
By (\ref{boundsonL}) we have
\begin{equation}
\label{isbd}
\begin{split}
c+o(1) = \SSS_{\kappa}(x_h,T_h) = T_h \int_0^1 \Bigr( L\bigl(x_h,x_h'/T_h \bigr)+\kappa \Bigr)\, ds \\ \geq T_h \int_0^1\Bigr( L_0 \frac{|x_h'|^2}{T_h^2}-(L_1-\kappa)\Bigr)\, ds \geq \frac{L_0}{T^*}\|x_h'\|_2^2-T^* |L_1-\kappa|,
\end{split} \end{equation}
where $\|\cdot\|_2$ denotes the $L^2$ norm with respect to the fixed Riemannian metric on $M$. Therefore, $\|x_h'\|_2$ is uniformly bounded and $(x_h)$ is 1/2-equi-H\"older-continuous:
\[
\mathrm{dist}\big(x_h(s'),x_h(s)\big)\leq\int_s^{s'}|x_h'(r)|\, dr \leq |s'-s|^{1/2} \|x_h'\|_2.
\]
By the Ascoli-Arzel\`a theorem, up to a subsequence $(x_h)$ converges uniformly to some $x\in C(\T,M)$.  In particular, $(x_h)$ eventually belongs to the image of the parameterization $\varphi_*$ induced by a smooth map
\[
\varphi: \T\times B_r \rightarrow M.
\]
See \eqref{eq:chart of the loop space} and recall that the image of this parameterization is $C^0$-open.
Then $x_h=\varphi_*(\xi_h)$, where $\xi_h$ belongs to $W^{1,2}(\T,B_r)$ and is a $(\PS)$ sequence for the functional
$$
\widetilde\A(\xi,T)=T\int_{\T}\widetilde L\bigr(s,\xi,\xi'/T \bigr)\,ds,
$$
with respect to the standard Hilbert product on $W^{1,2}(\T,\R^n)$,
where the Lagrangian $\widetilde{L}\in C^{\infty}(\T\times B_r \times \R^n)$ is obtained by pulling back $L+\kappa$ by $\varphi$. Moreover, $(\xi_h)$ converges uniformly and, since $\|\xi_h'\|_2$ is bounded, weakly in $W^{1,2}$ to some $\xi$ in
$W^{1,2}(\T,B_r)$. We must prove that this convergence is actually strong in $W^{1,2}$.

Since $\widetilde{L}(s,x,v)$ is electromagnetic for $|v|$ large, we have the bounds
\begin{equation}
\label{bddL}
\bigl| d_x \widetilde L (s,x,v) \bigr| \leq C(1+|v|^2),\quad\bigl| d_v \widetilde L (s,x,v)\bigr| \leq C(1+|v|),
\end{equation}
for a suitable constant $C$. Since $(\xi_h,T_h)$ is a $(\PS)$ sequence with $(\xi_h)$  bounded in $W^{1,2}$, we have by (\ref{diff1})
\begin{equation*}
\begin{split}
o(1) &= d\widetilde\A(\xi_h,T_h)[(\xi_h-\xi,0)] \\ &= T_h \int_{\T} d_x \widetilde L \bigl(s,\xi_h,\xi_h'/T_h\bigr) [\xi_h-\xi]\, ds + T_h \int_{\T} d_v \widetilde{L} \bigl(s,\xi_h,\xi_h'/T_h\bigr) \bigl[ (\xi_h'-\xi')/T_h \bigr] \, ds.
\end{split} \end{equation*}
By the first bound in (\ref{bddL}) and the uniform convergence $\xi_h\rightarrow \xi$, the first integral is infinitesimal. Together with the fact that $(T_h)$ is bounded away from zero, we obtain
\begin{equation}
\label{uno}
\int_{\T} d_v \widetilde{L} \bigl(s,\xi_h,\xi_h'/T_h\bigr) \bigl[ (\xi_h'-\xi')/T_h \bigr] \, ds=o(1).
\end{equation}
From the fiberwise uniform convexity of $\widetilde{L}$, we have the bound
\[
d_{vv} \widetilde{L}  (s,x,v) [u, u] \geq \delta |u|^2, \quad \forall (s,x,v)\in \T\times B_r \times \R^n, \; u\in \R^n,
\]
for a suitable positive number $\delta$. It follows that
\begin{equation*}
\begin{split}
d_v \widetilde L \left(s,\xi_h,\frac{\xi_h'}{T_h}\right) & \left[ \frac{\xi_h'-\xi'}{T_h} \right] - d_v\widetilde L  \left(s,\xi_h,\frac{\xi'}{T_h} \right)\left[\frac{\xi_h'-\xi'}{T_h} \right] \\
&= \int_0^1 d_{vv} \widetilde L \left(s,\xi_h,\frac{\xi'}{T_h}+\sigma \frac{\xi_h'-\xi'}{T_h} \right) \left[ \frac{\xi_h'-\xi'}{T_h}, \frac{\xi_h'-\xi'}{T_h} \right] \, d\sigma \geq \frac{\delta}{T_h^2} |\xi_h'- \xi'|^2.
\end{split} \end{equation*}
By integrating this inequality over $s\in \T$ and by (\ref{uno}), we obtain
\[
o(1)- \int_{\T} d_v \widetilde L \bigl(s,\xi_h,\xi'/T_h)\bigl[ (\xi_h'-\xi')/T_h \bigr] \, ds \geq \frac{\delta}{T_h^2} \|\xi_h' - \xi'\|_2^2.
\]
By the second bound in (\ref{bddL}), the sequence
\[
d_v \widetilde L \bigl(s,\xi_h,\xi'/T_h\bigr)
\]
converges strongly in $L^2$. By the weak $L^2$ convergence to $0$ of $(\xi_h'-\xi)$, we deduce that the integral on the left-hand side of the above inequality is infinitesimal. We conclude that $(\xi_h)$ converges to $\xi$ strongly in $W^{1,2}$.
\end{proof}

In general, $\SSS_{\kappa}$ might have Palais-Smale sequences $(x_h,T_h)$ with $(T_h)$ unbounded. However, this does not occur when $\kappa$ is larger than the lowest Ma\~{n}\'e critical value $c_u(L)$:

\begin{Lemma}\label{Lem:6}
If $\kappa>c_u(L)$, then any $(\PS)$ sequence $(x_h,T_h)$ in a given connected component of $\MM$ with $T_h\geq T_*>0$ is compact.
\end{Lemma}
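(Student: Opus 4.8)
The plan is to reduce Lemma~\ref{Lem:6} to the already-established Lemma~\ref{Lem;2}. Since that lemma handles Palais--Smale sequences with $T_h$ bounded above and below, and the hypothesis already gives $T_h \geq T_* > 0$, the only thing left to rule out is that $(T_h)$ has a subsequence diverging to $+\infty$. So I would argue by contradiction: assume, after passing to a subsequence, that $T_h \to +\infty$, and derive a contradiction with $\kappa > c_u(L)$.

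The key mechanism is the second-component derivative identity~(\ref{eq:diff A w.r.t. T}), which says
\[
\frac{\partial \SSS_{\kappa}}{\partial T}(x_h,T_h) = \frac{1}{T_h}\int_0^{T_h}\bigl(\kappa - E(\gamma_h,\gamma_h')\bigr)\,dt,
\]
where $\gamma_h(t) := x_h(t/T_h)$. Being a $(\PS)$ sequence, $df(x_h,T_h)\to 0$, so in particular this partial derivative is infinitesimal. Hence the mean value of $E(\gamma_h,\gamma_h')$ over $[0,T_h]$ converges to $\kappa$; equivalently $\int_0^{T_h} E(\gamma_h,\gamma_h')\,dt = \kappa T_h + o(T_h)$. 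First I would use the electromagnetic lower bound $E(x,v) \geq E_0|v|^2 - E_1$ (as in the proof of Lemma~\ref{PST0}) to deduce that $\int_0^{T_h}|\gamma_h'|^2\,dt = O(T_h)$, i.e.\ the $L^2$-norm of $\gamma_h'$ (on the time interval $[0,T_h]$) grows at most like $\sqrt{T_h}$. Next, using $L(x,v) \leq L_2|v|^2 + L_3$ together with this bound, I get $\SSS_{\kappa}(x_h,T_h) = \int_0^{T_h}\bigl(L(\gamma_h,\gamma_h')+\kappa\bigr)\,dt = O(T_h)$. Combining with the relation between $L$ and $E$ used in Lemma~\ref{Lem:4} — namely $E \geq C_0 L - C_1$ — and the fact that the mean of $E$ tends to $\kappa$, one extracts that in fact $\tfrac{1}{T_h}\SSS_{\kappa}(x_h,T_h) \to 0$ is \emph{not} quite what we want; rather the cleaner route is to directly estimate the \emph{mean Lagrangian action}: since $\tfrac{1}{T_h}\int_0^{T_h}\bigl(L(\gamma_h,\gamma_h')+\kappa\bigr)\,dt = \tfrac{1}{T_h}\int_0^{T_h}\bigl(L + E - E + \kappa\bigr)\,dt$, and the mean of $\kappa - E$ is $o(1)$, while the mean of $L + E = d_vL[v]$ can be controlled, one sees the mean action $\tfrac{1}{T_h}\SSS_\kappa(x_h,T_h)$ stays bounded. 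The crucial point to squeeze out is that $\liminf_h \tfrac{1}{T_h}\SSS_\kappa(x_h,T_h) \le 0$.

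Once we know the mean action of $\gamma_h$ over the (long) interval $[0,T_h]$ is $\le o(1)$, we are in position to invoke $\kappa > c_u(L)$. The subtlety is that $\gamma_h$ need not be contractible (it lies in an arbitrary connected component of $\MM$), so we cannot directly apply the definition of $c_u$. Here I would borrow the loop-closing trick from the proof of Lemma~\ref{Lem:5}: lift $\gamma_h$ to the universal cover $\widetilde M$, connect the endpoints $\widetilde\gamma_h(T_h)$ and $\widetilde\gamma_h(0)$ (whose distance is bounded uniformly in $h$, since they all lie in the same component) by a short path $\widetilde\alpha$ of uniformly bounded $\widetilde{\A}_\kappa$-action $\le C$, and project down to get a \emph{contractible} loop $\gamma_h \# \alpha$ of period $T_h + 1$ with $\SSS_\kappa(\gamma_h \# \alpha) = \SSS_\kappa(\gamma_h) + \widetilde{\A}_\kappa(\widetilde\alpha) \le \SSS_\kappa(\gamma_h) + C$. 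Since $\kappa > c_u(L)$, there is $\delta > 0$ with $\kappa - \delta \ge c_u(L)$, so $\SSS_{\kappa-\delta}(\gamma_h \# \alpha) \ge 0$; unraveling, $\SSS_\kappa(\gamma_h \# \alpha) = \SSS_{\kappa-\delta}(\gamma_h\#\alpha) + \delta(T_h + 1) \ge \delta(T_h+1)$, hence $\SSS_\kappa(\gamma_h) \ge \delta(T_h + 1) - C = \delta T_h + O(1)$. But then the mean action $\tfrac{1}{T_h}\SSS_\kappa(\gamma_h) \ge \delta + o(1)$, contradicting $\liminf_h \tfrac{1}{T_h}\SSS_\kappa(\gamma_h) \le 0$ from the previous paragraph. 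Therefore $(T_h)$ is bounded, and Lemma~\ref{Lem;2} finishes the proof.

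The main obstacle I anticipate is the bookkeeping in the second paragraph: getting from "mean of $\kappa - E$ is $o(1)$" plus the convexity/growth bounds on $L$ and $E$ to a clean statement like $\liminf_h \tfrac{1}{T_h}\SSS_\kappa(x_h,T_h) \le 0$. One has to be a little careful because $\SSS_\kappa$ involves $L + \kappa$, not $L + E$, so the cancellation is not immediate; the identity $L(x,v) + E(x,v) = d_vL(x,v)[v]$ and the fact that $\tfrac{1}{T_h}\int_0^{T_h} d_vL(\gamma_h,\gamma_h')[\gamma_h']\,dt$ can be bounded using Cauchy--Schwarz together with $\|\gamma_h'\|_{L^2[0,T_h]} = O(\sqrt{T_h})$ and the linear growth $|d_vL(x,v)| \le C(1+|v|)$ is what makes it work, giving $\tfrac{1}{T_h}\int_0^{T_h} d_vL[\gamma_h']\,dt = O(T_h^{-1/2}) = o(1)$. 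Hence $\tfrac{1}{T_h}\SSS_\kappa(x_h,T_h) = \tfrac{1}{T_h}\int_0^{T_h}(d_vL[\gamma_h'] - E + \kappa)\,dt = o(1)$, which even gives the stronger conclusion that the mean action tends to $0$. Everything else is a direct transcription of arguments already appearing in the excerpt (Lemmas~\ref{PST0}, \ref{Lem:4}, \ref{Lem:5}, \ref{Lem;2}).
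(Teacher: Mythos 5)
Your overall architecture is essentially the paper's, just unwound: you reduce to Lemma \ref{Lem;2} by bounding $(T_h)$ from above, and you obtain that bound from the affine dependence of the action on the energy parameter, $\SSS_{\kappa}=\SSS_{\kappa'}+(\kappa-\kappa')T$ with $\kappa'\in[c_u(L),\kappa)$, combined with a lower bound for $\SSS_{\kappa'}$ on the given connected component. The paper does exactly this in two lines, writing $T_h=(\kappa-c_u(L))^{-1}\bigl(\SSS_{\kappa}(x_h,T_h)-\SSS_{c_u(L)}(x_h,T_h)\bigr)$ and quoting Lemma \ref{Lem:5}; your final paragraph re-proves the relevant case of Lemma \ref{Lem:5} by the same loop-closing trick, which is correct but redundant.

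There is, however, a step in your write-up that is false as stated. You claim $\frac{1}{T_h}\int_0^{T_h}d_vL(\gamma_h,\gamma_h')[\gamma_h']\,dt=O(T_h^{-1/2})=o(1)$. From $\|\gamma_h'\|_{L^2[0,T_h]}^2=O(T_h)$ and $|d_vL(x,v)|\leq C(1+|v|)$ one only gets $\int_0^{T_h}\bigl|d_vL(\gamma_h,\gamma_h')[\gamma_h']\bigr|\,dt\leq C\int_0^{T_h}\bigl(|\gamma_h'|+|\gamma_h'|^2\bigr)\,dt=O(T_h)$, i.e.\ the mean is $O(1)$, not $o(1)$; and it genuinely is not $o(1)$, since for $|v|$ large $d_vL(x,v)[v]=|v|^2+\theta(x)[v]$ while the mean of $E=\frac{1}{2}|v|^2+V$ along $\gamma_h$ tends to $\kappa>e_0(L)=\max V$, so the mean of $|\gamma_h'|^2$ stays bounded away from zero. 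Thus your derivation of $\liminf_h T_h^{-1}\SSS_{\kappa}(x_h,T_h)\leq 0$ does not go through as written. Fortunately that fact needs none of this machinery: by definition a $(\PS)$ sequence has $\SSS_{\kappa}(x_h,T_h)\to c$ finite, so if $T_h\to+\infty$ then $T_h^{-1}\SSS_{\kappa}(x_h,T_h)\to 0$ trivially (better still, your inequality $\SSS_{\kappa}(\gamma_h)\geq\delta T_h-C'$ already contradicts the boundedness of $\SSS_{\kappa}(x_h,T_h)$ with no division needed). With that replacement your proof closes, and the identity (\ref{eq:diff A w.r.t. T}), the $L^2$ bound on $\gamma_h'$, and the whole of your second and fourth paragraphs become superfluous.
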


\begin{proof}
By Lemma \ref{Lem;2}, it is enough to show that $(T_h)$ is bounded from above. Since
\[
\SSS_{\kappa} (x,T)=\SSS_{c_u(L)} (x,T) + \big(\kappa -c_u(L)\big)T,
\]
the period
\[
T_h = \frac{1}{\kappa -c_u(L)} \bigr(\SSS_{\kappa} (x_h,T_h) - \SSS_{c_u(L)} (x_h,T_h)\bigr)
\]
is bounded from above, because $\SSS_{\kappa}$ is bounded on the $(\PS)$ sequence $(x_h,T_h)$ and $\SSS_{c_u(L)}(x_h,T_h)$ is bounded from below by Lemma \ref{Lem:5}.
\end{proof}

\begin{Rmk}
By choosing a suitable metric on $\mathcal{M}$, it is possible to characterize $c_u(L)$ as the infimum of all $\kappa_0$'s such that $\SSS_{\kappa}$ satisfies the Palais-Smale condition for every $\kappa\in [\kappa_0,+\infty)$. See \cite{cipp00}.
\end{Rmk} 

\section{Periodic orbits with high energy}
\label{pohe}

The following result shows that the energy levels above $c_u(L)$ have always periodic orbits and proves statements (i) and (ii) of the theorem in the Introduction.

\begin{Thm}
\label{highthm}
Assume that $\kappa>c_u(L)$. Then the following facts hold.
\begin{enumerate}
\item If $M$ is not simply connected, then the energy level $E^{-1}(\kappa)$ has a $\kappa$-action-minimizing periodic orbit in each non-trivial homotopy class of the free loop space.
\item If $M$ is simply connected, then the energy level $E^{-1}(\kappa)$ has a periodic orbit with positive  action $\SSS_{\kappa}$.
\end{enumerate}
\end{Thm}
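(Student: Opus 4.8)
The plan is to apply the abstract minimax machinery of Section~\ref{mpsec} to the functional $\SSS_{\kappa}$ on the Hilbert manifold $\MM = W^{1,2}(\T,M)\times(0,+\infty)$, exploiting the fact that $\kappa>c_u(L)$ gives us the two good properties needed: $\SSS_{\kappa}$ is bounded below on each connected component of $\MM$ (Lemma~\ref{Lem:5}), and, after the truncation trick of Remark~\ref{trunc}, the relevant Palais-Smale condition holds. For statement (i), fix a non-trivial conjugacy class of $\pi_1(M)$ and let $\MM_0$ be the corresponding connected component of $\MM$; it is the union of a connected component of $W^{1,2}(\T,M)$ with $(0,+\infty)$. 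Set $c := \inf_{\MM_0} \SSS_{\kappa}$, which is finite by Lemma~\ref{Lem:5}. The idea is to use $\Gamma =$ the class of all one-point subsets of $\MM_0$, as in Remark~\ref{mini}, so that the minimax value is exactly $c$, and then invoke Theorem~\ref{thm:finite c induces PS} to get a $(\PS)_c$ sequence $(x_h,T_h)$ in $\MM_0$. The task is then to show this sequence is compact, which will produce a minimizer; that minimizer is a critical point, hence (by the computation following \eqref{eq:diff A w.r.t. T}) a periodic orbit on $E^{-1}(\kappa)$ in the prescribed free homotopy class.

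The main obstacle is the incompleteness of $\MM$ coming from $T\to 0$, which is precisely why Theorem~\ref{thm:finite c induces PS} cannot be applied naively (it requires complete sublevels). Here is where the hypothesis that the class is \emph{non-trivial} is essential: by Lemma~\ref{Lem:3}(i), on $\MM^{\mathrm{noncontr}}$ the sublevels of $\SSS_{\kappa}$ are complete, since $\SSS_{\kappa}(x_h,T_h)\to+\infty$ whenever $T_h\to 0$ along non-contractible loops. So the completeness hypothesis of Theorem~\ref{thm:finite c induces PS} is satisfied on $\MM_0$, and we genuinely get a $(\PS)_c$ sequence. To upgrade it to compactness: first, $T_h$ is bounded below (away from $0$) because $\SSS_{\kappa}(x_h,T_h)\to c<+\infty$ together with Lemma~\ref{Lem:3}(i) forbids $T_h\to 0$ on a non-contractible component; second, $T_h$ is bounded above by Lemma~\ref{Lem:6} (using $\kappa>c_u(L)$); finally, Lemma~\ref{Lem;2} applies to a $(\PS)_c$ sequence with $0<T_*\le T_h\le T^*<\infty$ and yields a convergent subsequence. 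The limit $(x,T)$ minimizes $\SSS_{\kappa}$ on $\MM_0$ and lies in $E^{-1}(\kappa)$, establishing (i).

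For statement (ii), $M$ is simply connected so $\MM = \MM^{\mathrm{contr}}$ and $\SSS_{\kappa}$ is bounded below on all of $\MM$; but the infimum could well be negative or achieved only in the degenerate limit $T\to 0$ (at level $0$, by Lemma~\ref{Lem:3}(ii)), so a straightforward minimization need not produce a nonconstant orbit. Instead I would run a genuine mountain pass / minimax argument detecting a critical point with \emph{positive} action. The natural setup: since $M$ is simply connected, it has some nontrivial homotopy group $\pi_k(M)$ with $k\ge 2$ minimal, giving a nontrivial homotopy class of maps $(\overline{B}^{k+1},\partial B^{k+1})\to(M,\pt)$, or more conveniently a nontrivial class in $\pi_{k-1}$ of the based loop space, which one pushes into $W^{1,2}(\T,M)$. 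One builds a family $\Gamma$ of subsets of $\MM$ — images of such spheres of loops, with small-$T$ ``boundary'' — and checks the linking-type inequality $c := \inf_{\gamma\in\Gamma}\sup_\gamma \SSS_{\kappa} > 0$: the positivity comes from a quantitative strengthening of Lemma~\ref{Lem:3}(ii), namely that on contractible loops with small $T$ the action is bounded below by something that stays positive as $T\to 0$, while on loops of bounded length and bounded $T$ the action is coercive in a way that forces any sweepout of a topologically nontrivial family to cross a positive barrier. Then apply Theorem~\ref{thm:finite c induces PS} with the truncated negative gradient flow of Remark~\ref{trunc} (truncating below a level $b\in(0,c)$, which keeps the flow on the non-complete region well-behaved and leaves $\Gamma$ positively invariant), get a $(\PS)_c$ sequence with $c>0$; by Lemma~\ref{PST0} the periods cannot go to $0$ (that forces level $0\ne c$), by Lemma~\ref{Lem:6} they stay bounded above, and Lemma~\ref{Lem;2} gives compactness. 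The resulting critical point has action $c>0$, hence is a nonconstant periodic orbit on $E^{-1}(\kappa)$. The delicate point in (ii), and the one I expect to require the most care, is the verification that the minimax value over the chosen topologically nontrivial family is strictly positive — this is where the precise geometry of $\SSS_{\kappa}$ for small $T$ (Lemma~\ref{Lem:3}(ii) and its quantitative refinement) has to be combined with the topological nontriviality of $M$.
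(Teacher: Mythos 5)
Your plan follows the paper's own proof almost verbatim. Part (i) is the same minimization argument: Lemma \ref{Lem:5} for boundedness from below on the component $\MM_\alpha$, Lemma \ref{Lem:3}(i) both for completeness of the sublevels and for keeping the periods of a $(\PS)$ sequence away from $0$, Lemma \ref{Lem:6} for the upper bound on the periods, and Lemma \ref{Lem;2} for compactness; this part is complete and correct. Part (ii) has the same architecture as the paper: the Lusternik--Fet minimax over a nontrivial class in $[S^{l-1},\MM]$ coming from $\pi_l(M)\neq 0$, the truncated negative gradient flow of Remark \ref{trunc} combined with Lemma \ref{Lem:4} to handle the incompleteness at $T\to 0$, then Lemma \ref{PST0} (since $c\neq 0$), Lemma \ref{Lem:6} and Lemma \ref{Lem;2}.

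The one genuine gap is the step you yourself flag as delicate in (ii): the strict positivity of the minimax value $c$. The mechanism you propose --- that ``on contractible loops with small $T$ the action is bounded below by something that stays positive as $T\to 0$'' --- cannot work: on a constant loop $x_0$ one has $\SSS_{\kappa}(x_0,T)=T\bigl(\kappa-E(x_0,0)\bigr)\to 0$ as $T\to 0$, so there is no uniformly positive lower bound near the degenerate stratum, and indeed $\inf_{\MM}\SSS_{\kappa}=0$ in the simply connected case. The correct argument, and the place where $\kappa>c_u(L)$ actually enters in (ii), has three steps. First, since the class $\mathcal{H}$ is nontrivial, every sweepout $h=(x,T)\in\mathcal{H}$ contains a loop with $\ell(x(\xi))\geq a$ for some $a>0$ independent of $h$ (this is \cite[Theorem 2.1.8]{kli78}). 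Second, the estimate $\SSS_{\kappa}(x,T)\geq \frac{L_0}{T}\,\ell(x)^2-T(L_1-\kappa)$ shows that a loop of length at least $a$ whose action is at most $c+1$ must have period $T\geq T_0>0$. Third --- and this is the ingredient missing from your sketch --- the identity $\SSS_{\kappa}(x,T)=\SSS_{c_u(L)}(x,T)+\bigl(\kappa-c_u(L)\bigr)T$, together with $\SSS_{c_u(L)}\geq 0$ on contractible loops, converts the period lower bound into the uniform action lower bound $\SSS_{\kappa}(x,T)\geq \bigl(\kappa-c_u(L)\bigr)T_0>0$. A lower bound on the period alone does not yield a positive lower bound on the action, so without this last step the minimax argument would not close.
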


\begin{proof}
(i) Assume that $M$ is not simply connected. Let $\alpha \in[\T,M]$ be a non-trivial homotopy class and let
$\MM_{\alpha}$ be the connected component of $\MM^\mathrm{noncontr}$ corresponding to $\alpha$. By Lemma \ref{Lem:5}, the functional $\SSS_{\kappa}$ is bounded from below on $\MM_{\alpha}$. By Lemma \ref{Lem:3} (i), the sublevels
\[
\{ (x,T)\in \MM_{\alpha} \, | \, \SSS_{\kappa}(x,T)\leq c \}
\]
are complete. Let $(x_h,T_h)\subset \MM_{\alpha}$ be a (PS) sequence for $\SSS_{\kappa}$. By Lemma \ref{Lem:3} (i), $(T_h)$ is bounded away from zero, so Lemma \ref{Lem:6} implies that $\SSS_{\kappa}$ satisfies the (PS) condition on $\MM_{\alpha}$. By Remark \ref{mini}, we conclude that $\SSS_{\kappa}$ has a minimizer on $\MM_{\alpha}$, as we wished to prove.

\medskip

(ii) Assume that $M$ is simply connected, so that $\MM=\MM^{\mathrm{contr}}$. In this case, $\SSS_{\kappa}$ is strictly positive everywhere, because $\kappa>c_u(L)$, but the infimum of $\SSS_{\kappa}$ is zero, as one readily checks by looking at sequences of the form $(x_0,T_h)$, with $x_0$ a constant loop and $T_h\rightarrow 0$. So the infimum is not achieved. We will find the periodic orbit by considering the same minimax class which Lusternik and Fet \cite{lf51} considered in their proof of the existence of a closed geodesic on a simply connected compact manifold.

Since the closed manifold $M$ is simply connected, there exists $l\geq 2$ such that $\pi_l(M)\ne 0$ (a manifold all of whose homotopy groups vanish is contractible, but closed manifolds are never contractible, for instance because their $n$-dimensional homology group with $\Z_2$ coefficients does not vanish). We fix a non-zero homotopy class $\mathcal{G} \in\pi_l(M)$. Thanks to the isomorphism
$\pi_{l-1}(C(\T,M))\cong\pi_l(M)$, we have an induced non-zero homotopy class
\[
\mathcal{H} \in [S^{l-1},C(\T,M)] \cong [S^{l-1},\MM],
\]
and we consider the minimax value
\[
c=\inf_{\substack{h:S^{l-1}\to \MM\\ h\in \mathcal{H}}} \max_{\xi\in S^{l-1}} \SSS_{\kappa} (h(\xi)).
\]
Let us show that $c>0$. Since $\H$ is non-trivial, there exists a positive number $a$ such that for every map $h=(x,T): S^{l-1} \rightarrow \MM$ belonging to the class $\H$ there holds
\[
\max_{\xi \in S^{l-1}} \ell(x(\xi)) \geq a,
\]
where $\ell(x(\xi))$ denotes the length of the loop $x(\xi)$ (see \cite[Theorem 2.1.8]{kli78}). If $(x,T)$ is an element of $\MM$ with $\ell(x)\geq a$, then (\ref{boundsonL}) implies
\begin{equation*}
\begin{split}
\SSS_{\kappa}(x,T) &= T\int_{\T} \Bigl(L\bigl(x,x'/T\bigr)+\kappa \Bigr) \,ds
\geq T \int_{\T} \Bigl( L_0\frac{|x'|^2}{T^2} -L_1+\kappa \Bigr)\, ds \\
& \geq \frac{L_0}{T}\ell(x)^2-T(L_1-\kappa)
\geq \frac{L_0}{T}a^2-T(L_1-\kappa).
\end{split} \end{equation*}
Since $a>0$, the above chain of inequalities implies that there exists $T_0>0$ such that for every $(x,T)\in \MM$ with $\ell(x)\geq a$ and $\SSS_{\kappa}(x,T)\leq c+1$, the period $T$ is at least $T_0$. Now let $h\in \H$ be such that the maximum of $\SSS_{\kappa}$ on $h(S^{l-1})$ is less than $c+1$. By the above considerations, there exists $(x,T)$ in $h(S^{l-1})$ with $T\geq T_0$, whence
\[
\SSS_{\kappa} (x,T)=\SSS_{c_u(L)} (x,T)+ \bigl(\kappa-c_u(L)\bigr)T \geq \bigl(\kappa-c_u(L)\bigr) T_0.
\]
This shows that the minimax value $c$ is strictly positive.

Theorem \ref{thm:finite c induces PS}, together with Remark \ref{trunc} and Lemma \ref{Lem:4}, implies the existence of a $(\PS)_c$ sequence $(x_h,T_h)$. Lemma \ref{PST0} guarantees that $(T_h)$ is bounded away from zero, so by Lemma \ref{Lem:6} the sequence $(x_h,T_h)$ has a limiting point in $\MM$, which gives us the required periodic orbit.
\end{proof}

\begin{Rmk}
If $M$ is not simply connected and $\kappa>c_u(L)$, the energy level $E^{-1}(\kappa)$ might have no contractible periodic orbits. Consider for instance the Lagrangian $L(x,v)=|v|^2/2$ on the torus $\T^n$, equipped with the flat metric. The corresponding Euler-Lagrange flow is the geodesic flow on $T\T^n$, $c_u(L)=0$, and all the non-constant closed geodesics on the flat torus are non-contractible.
\end{Rmk}

\begin{Rmk}
\label{veryhigh}
If $\kappa>c_0(L)$, the existence of a periodic orbit on $E^{-1}(\kappa)$ also follows from Theorem \ref{finsler}, because every Finsler metric on a closed manifold has a closed geodesic.  
\end{Rmk}

\begin{Rmk}
Theorem \ref{finsler} implies that the multiplicity results for closed Finsler geodesics hold also for Hamiltonian orbits at energy levels $\kappa>c_0(L)$. Actually, most of these results remain true for $\kappa>c_u(L)$.
In fact, as the proof of Theorem \ref{highthm} suggests, the (PS) condition and the topology of the sublevels of the functional $\SSS_{\kappa}$ are analogous to the corresponding properties of the Finsler geodesic energy functional (with the notable exception of the zero level). 
\end{Rmk}

\section{Topology of the free period action functional for low energies}
\label{tfpafle}

As we have seen, when $\kappa<c_u(L)$ the functional $\SSS_{\kappa}$ is unbounded from below on each connected component of $\MM$. The aim of this section is to show that, nevertheless, some sublevels of $\SSS_{\kappa}$ have a sufficiently rich topology.

We start by proving a simple lemma about the integral of a one form. The integral of a given one-form on a curve $x$ is clearly bounded by a constant times the length of $x$. When the support of the curve is contained in a ball of $M$, one may also take the square of the length in this bound, which is a better estimate for short curves. More precisely, we have the following:

\begin{Lemma}\label{Lemma:isoper}
Let $\theta$ be a smooth one-form on $M$ and let $U\subset M$ be an open set whose closure is diffeomorphic to a closed ball in $\R^n$. Then there exists a number $\Theta>0$ such that
\[
\bigg| \int_\T x^*(\theta) \bigg| \leq \Theta \cdot\ell(x)^2,
\]
for every closed curve $x:\T\rightarrow U$.
\end{Lemma}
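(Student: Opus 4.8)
The idea is to reduce the statement, via the diffeomorphism $\overline{U}\cong \overline{B}^n$, to the case of a one-form on a ball in $\R^n$, where one can use the primitive of the exact form $d\theta$ together with Stokes' theorem and the classical isoperimetric inequality. First I would fix a diffeomorphism $\psi: \overline{U} \to \overline{B}^n$ onto a closed Euclidean ball, and set $\omega := (\psi^{-1})^* \theta$, a smooth one-form on $\overline{B}^n$. Since $\overline{B}^n$ is diffeomorphic to a convex set, $d\omega$ is exact, say $d\omega = d\beta$ — but more simply $\omega$ itself need not be closed, so instead I split $\omega = \omega_c + d g$ is not available; the cleaner route is: write $\omega = \alpha + d\phi$ where $\alpha$ is \emph{not} needed. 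Let me instead argue directly with $d\omega$. The two-form $d\omega$ on the ball can be written as $d\omega = \sum_{i<j} a_{ij}\, dy^i\wedge dy^j$ with the $a_{ij}$ bounded, say $\|d\omega\|_{L^\infty(\overline{B}^n)} \le A$.

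The core estimate is then the following. Given a closed curve $x: \T \to U$, let $y := \psi\circ x: \T \to \overline{B}^n$, so that $\int_\T x^*(\theta) = \int_\T y^*(\omega)$. Since $d\omega$ is exact on the ball, choose a primitive one-form $\beta$ with $d\beta = d\omega$; then $\omega - \beta$ is closed on the simply connected ball, hence exact, $\omega - \beta = dG$, so $\int_\T y^*(\omega) = \int_\T y^*(\beta)$ because $\int_\T y^*(dG) = 0$ for a closed curve. Now $\beta$ can be taken to satisfy $\beta(0) = 0$ with $|\beta(p)| \le C\,|p|$ on the ball (e.g. via the radial homotopy formula $\beta = \int_0^1 \iota_R (d\omega)|_{tp}\, dt$ applied pointwise, which gives exactly such a bound with $C$ proportional to $A$). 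Then for a curve $y$ contained in a small region, $\left| \int_\T y^*(\beta) \right| \le C \cdot \max_{s}|y(s)|$ — but this is not yet the squared-length bound; to get it we must shift $\beta$ by a constant. Choosing a basepoint $y(s_0)$ on the curve and replacing $\beta$ by $\beta - $ (its value transported) is the wrong operation since $\beta$ is a one-form, not a function; the right move is: $\int_\T y^*(\beta) = \int_\T y^*(\beta) - c\int_\T y^*(dy^k)$ does not help either.

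The clean argument is via Stokes. Since $\T$ is the boundary of the disk $D^2$ and $y(\T) \subset \overline{B}^n$ is a loop in a ball, $y$ extends to a map $\bar y: D^2 \to \overline{B}^n$ (the ball is contractible), and then $\int_\T y^*(\omega) = \int_{D^2} \bar y^*(d\omega)$. We may choose the extension $\bar y$ to be the \emph{linear coning} of $y$ to the point $y(0)$: $\bar y(r e^{i\phi}) = y(0) + r\big(y(\phi) - y(0)\big)$ in Euclidean coordinates on the ball, which is well-defined and stays in the (convex) ball. A direct computation of $|\bar y^*(d\omega)|$ — using $\|d\omega\|_{L^\infty}\le A$, that the two partial derivatives of $\bar y$ are controlled by $|y(\phi) - y(0)|$ and by $|y'(\phi)|$ respectively, and that $|y(\phi) - y(0)| \le \int_0^{2\pi} |y'| = \ell_E(y)$ (Euclidean length of $y$), together with $\int_0^{2\pi}|y'(\phi)|\,d\phi = \ell_E(y)$ — yields $\left|\int_{D^2}\bar y^*(d\omega)\right| \le A' \,\ell_E(y)^2$ after integrating out the radial variable. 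Finally I would transfer back: since $\psi$ is a diffeomorphism of compact manifolds, there is a constant $\Lambda$ with $\Lambda^{-1}\,\ell(x) \le \ell_E(\psi\circ x)\le \Lambda\,\ell(x)$ for all curves $x$ in $\overline{U}$, so $\left|\int_\T x^*(\theta)\right| = \left|\int_\T y^*(\omega)\right| \le A'\Lambda^2\,\ell(x)^2$, and we set $\Theta := A'\Lambda^2$.

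The main obstacle I anticipate is producing the \emph{squared}-length bound rather than the naive linear one: one must use the two-dimensional filling $\bar y$ of the loop and exploit that \emph{both} factors in the pulled-back two-form $d\omega$ are small — one factor is bounded by the diameter of $y(\T)$, which is at most $\ell_E(y)$, and the other integrates to $\ell_E(y)$ — so that the double integral over $D^2$ scales like $\ell_E(y)^2$. Care is needed that the coning $\bar y$ really lands in $\overline{B}^n$ (this is where convexity of the ball, i.e. the hypothesis that $\overline{U}$ is diffeomorphic to a ball, is used in an essential way) and that the bounds on $\partial_r \bar y$ and $\partial_\phi \bar y$ are uniform; everything else is routine.
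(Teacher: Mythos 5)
Your final argument is correct and is essentially the paper's proof: reduce to a Euclidean ball via the diffeomorphism, cone the loop linearly to the point $y(0)$ (convexity keeping the cone inside the ball), apply Stokes to replace $\int_\T y^*(\omega)$ by $\int_{D^2}\bar y^*(d\omega)$, and bound the two tangent factors by the diameter (at most the length) and by $|y'|$ respectively, yielding the squared-length estimate. The false starts with the primitive $\beta$ are explicitly discarded and do not affect the validity of the argument you settle on.
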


\begin{proof}
Up to the change of the constant $\Theta$,
we may assume that $U=B_r$ is the ball of radius $r$ around the origin in $\R^n$, equipped with the Euclidean metric. Given the closed curve $x:\T\rightarrow B_r$, we consider the map
\[
X:[0,1]\times \T \rightarrow B_r, \quad X(s,t) = x(0) + s \bigl( x(t) - x(0) \bigr).
\]
Then $X(1,t)=x(t)$ and $X(0,t)=x(0)$, hence by Stokes theorem
\begin{equation*}
\begin{split}
\left| \int_{\T} x^*(\theta) \right| &= \bigg| \int_{[0,1] \times \T} X^* (d\theta) \bigg| = \left| \int_0^1 ds \int_{\T} d\theta\bigl( X(s,t) \bigr) \Bigl[ \frac{\p X}{\p s} , \frac{\p X}{\p t} \Bigr] \, dt \right| \\ &\leq \|d\theta\|_{\infty} \int_0^1 ds \int_{\T} \left| \frac{\p X}{\p s} \right| \, \left| \frac{\p X}{\p t} \right| \, dt
= \|d\theta\|_{\infty} \int_0^1 ds \int_{\T} \bigl| x(t) - x(0) \bigr| s \bigl| x'(t) \bigr|\, dt \\ &\leq \frac{1}{2} \|d\theta\|_{\infty}\, \ell(x) \int_0^1 ds \int_{\T} s \bigl| x'(t)\bigr|\, dt = \frac{1}{4} \|d\theta\|_{\infty}\, \ell(x)^2,
\end{split} \end{equation*}
as claimed.
\end{proof}

\paragraph{\bf The energy range $\mathbf{(e_0(L),c_u(L))}$} If $\kappa<c_u(L)$, there are contractible closed curves with negative action $\SSS_{\kappa}$. Since the space of contractible loops is connected, we can consider the following class of continuous paths in $\MM$:
\begin{equation}
\label{curve}
\mathcal{Z}_0 := \bigl\{ (x,T):[0,1] \rightarrow \MM\, \big| \, x(0) \mbox{ is a constant loop and } \SSS_{\kappa}(x(1),T(1)) <0 \bigr\}.
\end{equation}
Notice that if $x_0$ is a constant loop and $T>0$, then
\begin{equation}
\label{constants}
\SSS_{\kappa}(x_0,T) = T \bigl( L(x_0,0) + \kappa \bigr) = T \bigl( \kappa - E(x_0,0) \bigr).
\end{equation}
When $\kappa>e_0(L)=\max_{x\in M} E(x,0)$, the above quantity is strictly positive (and tends to zero for $T\rightarrow 0$). The next lemma shows that when $e_0(L) < \kappa < c_u(L)$, $\SSS_{\kappa}$ has a sort of mountain pass geometry:

\begin{Lemma}
\label{Lem:mountain pass}
Assume that $e_0(L) <\kappa < c_u(L)$. Then there exists $a>0$ such that for every $z\in \mathcal{Z}_0$ there holds
\[
\max_{\sigma\in[0,1]} \SSS_{\kappa} \bigl(z(\sigma)\bigr)\geq a.
\]
\end{Lemma}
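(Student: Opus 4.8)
The plan is to show that along any path $z=(x,T)\in\mathcal{Z}_0$ the loop component $x(\sigma)$ must, at some parameter $\sigma$, be ``moderately long'': not a point (since we eventually need to leave the constant loops) but not so long that the action is automatically large. The obstruction to making the action small for such intermediate loops will come precisely from the gap $\kappa>e_0(L)$, which keeps the action of short loops bounded below by a positive quantity via \eqref{constants}, combined with Lemma \ref{Lemma:isoper}, which controls the magnetic term of short loops by the \emph{square} of the length.

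First I would reduce to electromagnetic form: since $E^{-1}(\kappa)$ is compact we may assume $L$ is electromagnetic for $|v|$ large, and in fact for the estimate it suffices to work with the bounds \eqref{boundsonL}, \eqref{bd2L} and a model electromagnetic Lagrangian $\tfrac12|v|^2+\theta(x)[v]-V(x)$ with $V(x)=E(x,0)\le e_0(L)<\kappa$. Fix a finite cover of $M$ by open sets $U_1,\dots,U_N$ whose closures are diffeomorphic to closed balls, let $\rho>0$ be a Lebesgue number of this cover for the fixed metric $g$, and let $\Theta$ be the largest of the constants produced by Lemma \ref{Lemma:isoper} on these sets. Set
\[
\varepsilon:=\kappa-e_0(L)>0 .
\]
Then I claim: for every $(x,T)\in\MM$ with $\ell(x)\le\rho$ one has
\[
\SSS_{\kappa}(x,T)\;\ge\;\frac{L_0}{T}\,\ell(x)^2-\Theta\,\ell(x)^2\cdot\frac{1}{?}\;+\;T\bigl(\kappa-E(x(\cdot),0)\bigr)+\ldots
\]
— more carefully, writing the action as kinetic $+$ magnetic $-$ potential $+\kappa$, the kinetic term is $\ge \tfrac{L_0}{T}\ell(x)^2$, the magnetic term is $\ge -\Theta\,\ell(x)^2$ by Lemma \ref{Lemma:isoper} (the loop being contained in one of the $U_j$ since $\ell(x)\le\rho$), and the remaining terms give $\ge T\,\varepsilon - o(1)$ where the $o(1)$ is controlled by $\mathrm{diam}\,x$ and hence by $\ell(x)$. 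I would organize this into two regimes. If $T\le\sqrt{L_0/\Theta}$ then $\tfrac{L_0}{T}\ell(x)^2-\Theta\ell(x)^2\ge 0$, so $\SSS_{\kappa}(x,T)\ge T\varepsilon - C\ell(x)$ for a constant $C$ depending on $\|dV\|_\infty$; if additionally $\ell(x)$ is small this is still controlled. If instead $T\ge\sqrt{L_0/\Theta}=:T_0$, then $\SSS_{\kappa}(x,T)\ge\SSS_{c_u(L)}(x,T)+(\kappa-c_u(L))T\ge -C'+(\kappa-c_u(L))T_0$ using Lemma \ref{Lem:5}, which is bounded below. Combining, there is $a_1>0$ and $\delta>0$ such that $\SSS_\kappa(x,T)<a_1$ together with $\ell(x)\le\rho$ forces $\ell(x)<\delta$ — i.e. moderate-action short-length loops are actually very short, and then $\SSS_\kappa\ge T\varepsilon-C\delta>-a_1$ again, a contradiction once $\delta$ is chosen small enough; so in fact $\ell(x)\le\rho$ and $\SSS_\kappa(x,T)$ small is impossible below some positive threshold $a$. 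I should double-check the logic here to extract a clean statement: there exists $a>0$ such that every $(x,T)\in\MM$ with $\ell(x)\le\rho$ satisfies $\SSS_\kappa(x,T)\ge a$.

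With that lemma in hand the conclusion is a standard continuity/connectedness argument. Let $z=(x,T)\in\mathcal{Z}_0$. At $\sigma=0$ the loop $x(0)$ is constant, so $\ell(x(0))=0\le\rho$. The function $\sigma\mapsto\ell(x(\sigma))$ is continuous on $[0,1]$ (length is continuous on $W^{1,2}(\T,M)$, indeed $\ell(x)\le\|x'\|_{L^2}$ and $W^{1,2}$-convergence implies $L^2$-convergence of derivatives). If $\ell(x(\sigma))\le\rho$ for all $\sigma$, then by the lemma $\SSS_\kappa(x(1),T(1))\ge a>0$, contradicting $\SSS_\kappa(z(1))<0$. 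Hence there is a first $\sigma^*\in(0,1)$ with $\ell(x(\sigma^*))=\rho$; applying the lemma at $\sigma^*$ gives $\SSS_\kappa(z(\sigma^*))\ge a$, whence $\max_{\sigma}\SSS_\kappa(z(\sigma))\ge a$, as required.

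\textbf{Main obstacle.} The technical heart is the key lemma: making the threshold $a$ uniform over \emph{all} of $\MM$ (not just over a compact family) while the loop length ranges over $[0,\rho]$ and the period $T$ over all of $(0,+\infty)$. The difficulty is the interplay between three competing terms — the kinetic energy $\tfrac{L_0}{T}\ell(x)^2$ blows up as $T\to0$, the magnetic term $-\Theta\ell(x)^2$ is $T$-independent, and the $T(\kappa-e_0(L))$ term vanishes as $T\to0$ — so one must carefully split into $T$ small / $T$ large and in the $T$-small regime further exploit that $\ell(x)$ being small forces the potential term close to $-T\,e_0(L)$. The cleanest route is probably to first fix $T_0:=\sqrt{L_0/\Theta}$ so that kinetic dominates magnetic for $T\le T_0$, handle $T\ge T_0$ by the lower bound of Lemma \ref{Lem:5}, and for $T\le T_0$ absorb the error in the potential term by choosing $\rho$ (hence $\delta$) small relative to $\varepsilon$; one has to keep track that shrinking $\rho$ does not worsen $\Theta$, which it does not (Lemma \ref{Lemma:isoper}'s constant only improves on smaller balls). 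I expect the bookkeeping, not any conceptual point, to be where care is needed.
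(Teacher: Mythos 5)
Your overall strategy is the paper's: combine the quadratic isoperimetric bound of Lemma \ref{Lemma:isoper} for the one-form $\theta(x)[v]=d_vL(x,0)[v]$ with the Taylor/convexity estimate $L(x,v)\ge -E(x,0)+\theta(x)[v]+L_0|v|^2$ coming from \eqref{bd2L}, deduce that loops of negative action must be longer than some $r_1>0$, and then run a continuity argument on $\sigma\mapsto\ell(x(\sigma))$. However, the quantitative core as you state it contains two genuine errors. First, your ``clean statement'' --- a uniform $a>0$ with $\SSS_\kappa(x,T)\ge a$ whenever $\ell(x)\le\rho$ --- is false: for a constant loop $x_0$ one has $\SSS_\kappa(x_0,T)=T(\kappa-E(x_0,0))\to 0$ as $T\to 0$, so the infimum of $\SSS_\kappa$ over $\{\ell\le\rho\}$ is $0$. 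The lower bound one can actually prove is proportional to the length, and must be evaluated at the \emph{first} parameter $\sigma$ with $\ell(x(\sigma))$ equal to a fixed $r\in(0,r_1)$, not on the whole sublevel $\{\ell\le\rho\}$. Second, your large-$T$ case is wrong: since $\kappa<c_u(L)$, the term $(\kappa-c_u(L))T$ in the identity $\SSS_\kappa=\SSS_{c_u(L)}+(\kappa-c_u(L))T$ tends to $-\infty$ as $T\to\infty$, so the inequality $(\kappa-c_u(L))T\ge(\kappa-c_u(L))T_0$ goes the wrong way for $T\ge T_0$. Indeed the second half of Lemma \ref{Lem:5} asserts that $\SSS_\kappa$ is unbounded from below in this energy range, so no argument of this shape can close.

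Both problems disappear if you avoid the case split altogether, which is what the paper does. For a loop contained in a single chart $U_j$ the estimates combine into
\[
\SSS_\kappa(x,T)\;\ge\;\bigl(\kappa-e_0(L)\bigr)\,T+\Bigl(\frac{L_0}{T}-\Theta\Bigr)\,\ell(x)^2,
\]
valid for \emph{every} $T>0$; note also that $\int_0^T\bigl(\kappa-E(\gamma,0)\bigr)\,dt\ge\bigl(\kappa-e_0(L)\bigr)T$ exactly, since $E(\cdot,0)\le e_0(L)$ pointwise, so the $o(1)$ correction involving $\|dV\|_\infty$ in your sketch is not needed. Writing $\varepsilon=\kappa-e_0(L)>0$ and minimizing the right-hand side over $T>0$ (the minimum of $\varepsilon T+L_0\ell^2/T$ is $2\ell\sqrt{L_0\varepsilon}$, attained at $T=\ell\sqrt{L_0/\varepsilon}$) yields $\SSS_\kappa(x,T)\ge\ell\bigl(2\sqrt{L_0\varepsilon}-\Theta\,\ell\bigr)$. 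This single inequality does both jobs: it shows that $\SSS_\kappa(x,T)<0$ forces $\ell(x)>r_1:=\min\{r_0,\sqrt{L_0\varepsilon}/\Theta\}$, and, evaluated at the first $\sigma$ with $\ell(x(\sigma))=r$ for a fixed $r\in(0,r_1)$, it gives the uniform positive bound $a=r\bigl(\sqrt{L_0\varepsilon}-\Theta r\bigr)$. (Your threshold $\sqrt{L_0/\Theta}$ for the period should in any case be $L_0/\Theta$.) With the key lemma restated at fixed length $r$, your continuity endgame is correct and coincides with the paper's.
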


\begin{proof}
Consider the smooth one-form on $M$,
\[
\theta(x) [v] := d_v L(x,0)[v].
\]
By taking a Taylor expansion and by using the bound (\ref{bd2L}), we get the estimate
\begin{equation}
\label{bbb}
L(x,v) = L(x,0)+d_vL(x,0)[v]+\frac{1}{2}d_{vv}L(x,s v)[v,v]
\geq -E(x,0)+\theta(x)[v]+ L_0 |v|^2,
\end{equation}
where $s\in [0,1]$. Let $\{U_1,\dots,U_N\}$ be a finite covering of $M$ consisting of open sets whose closures are diffeomorphic to closed Euclidean balls, and let $\Theta >0$ be such that the conclusion of Lemma
\ref{Lemma:isoper} holds for the one-form $\theta$, for each of the open sets $U_j$'s. Let $r_0$ be a Lebesgue number for this covering, meaning that every ball of radius $r_0$ is contained in one of the $U_j$'s.

We claim that if $\SSS_{\kappa}(x,T)<0$ then
\begin{equation}
\label{claim}
\ell(x)>\min\bigg\{r_0,\frac{\sqrt{L_0(\kappa-e_0(L))}}{\Theta}\bigg\}=:r_1.
\end{equation}
In fact, assuming that $\ell(x)\leq r_0$, we have that $x(\T)$ is contained in some $U_j$, for $1\leq j \leq N$. Set as usual $\gamma(t)=x(t/T)$.
By Lemma \ref{Lemma:isoper} and by (\ref{bbb}), we obtain the chain of inequalities
\begin{equation}
\label{stma}
\begin{split}
0 &>\SSS_{\kappa}(x,T) =\SSS_{\kappa}(\gamma) = \int_0^T \bigr(L(\gamma,\gamma')+\kappa \bigr)\,dt
\\ &\geq \int_0^T\bigr( -E(\gamma,0) + \theta(\gamma)[\gamma'] + L_0 |\gamma'|^2 + \kappa \bigr)\, dt
\\ &= \int_0^T \bigl( \kappa - E(\gamma,0)\bigr)\, dt + \int_{\R/T\Z} \gamma^*(\theta) + L_0 \int_0^T |\gamma'|^2\, dt \\
&\geq \bigl(\kappa-e_0(L)\bigr)T-\Theta\cdot \ell(\gamma)^2+\frac{L_0}{T}\ell(\gamma)^2.
\end{split} \end{equation}
Since we are assuming $\kappa>e_0(L)$, the above estimate implies that
$T> L_0/\Theta$ and that
\[
\ell(\gamma)^2>\frac{\bigl(\kappa-e_0(L)\bigr)T}{\Theta-L_0/T}>\frac{\bigl(\kappa-e_0(L)\bigr)T}{\Theta}>\frac{\bigl(\kappa-e_0(L)\bigr)L_0}{\Theta^2},
\]
which proves (\ref{claim}).

Fix some number $r$ in the open interval $(0,r_1)$.
Since $z=(x,T)\in \mathcal{Z}_0$, $\SSS_{\kappa}(x(1),T(1))$ is negative, so by (\ref{claim}) the length of $x(1)$ is larger than $r_1$. By continuity, using the fact that $x(0)$ is a constant loop, we get the existence of $\sigma\in(0,1)$ for which $\ell(x(\sigma))=r$. Then (\ref{stma}) implies
\[
\SSS_{\kappa}(x(\sigma),T(\sigma)) \geq \bigl(\kappa-e_0(L)\bigr) T +\Bigr(\frac{L_0}{T}-\Theta\Bigr)r^2.
\]
Minimization in $T$ yields
\[
\SSS_{\kappa}(x(\sigma),T(\sigma)) \geq
r \bigr(\sqrt{L_0(\kappa-e_0(L))}-\Theta r \bigr)=: a.
\]
The number $a$ is positive because $r<r_1$. This concludes the proof.
\end{proof}

\paragraph{\bf The energy range $\mathbf{(\min E,e_0(L))}$} When $\kappa<e_0(L)$, the identity (\ref{constants}) shows that $\SSS_{\kappa}$ takes negative values on some constant loops, and the conclusion of Lemma \ref{Lem:mountain pass} cannot hold. Instead than considering the class of paths which go from some constant loop to a loop of negative action, one has to consider the class of deformations of the space of constant loops - which is diffeomorphic to $M$ - into the space of loops with negative action. More precisely, we consider the set of continuous maps
\[
\mathcal{Z}_M = \bigl\{ (x,T): [0,1]\times M \rightarrow \MM \, \big| \, x(0,x_0) = x_0 \mbox{ and } \SSS_{\kappa}\bigl(x(1,x_0),T(1,x_0)\bigr)<0, \; \forall x_0\in M\bigr\}.
\]

\begin{Lemma}
\label{notempty}
If $\kappa<c_u(L)$, then the set $\mathcal{Z}_M$ is not empty.
\end{Lemma}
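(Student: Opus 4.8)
The plan is to construct an explicit element of $\mathcal{Z}_M$ by deforming each constant loop $x_0\in M$ through loops of controlled geometry until the action becomes negative. The key point is that $\kappa<c_u(L)$ means there is some contractible closed curve with strictly negative $\SSS_{\kappa}$-action, and we want to spread this single "negative" loop over all of $M$ in a continuous way.

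First I would fix, using $\kappa<c_u(L)$, a contractible closed curve $\beta$ with $\SSS_{\kappa}(\beta)<0$; write $\beta=(y,S)$ for some $y\in W^{1,2}(\T,M)$ and $S>0$. Pick a base point $p:=y(0)$. Next, for each $x_0\in M$ choose a path $\delta_{x_0}$ from $x_0$ to $p$, depending continuously on $x_0$; this can be done globally by joining $x_0$ to $p$ along a geodesic in each chart of a finite good cover and patching (or, more cleanly, by pulling back a single path in $M\times[0,1]$ near the diagonal, as in the remark on $W^{1,2}([0,1],M)$ in Section~\ref{hmlsec}). Then the "negative" end of the deformation of $x_0$ will be the based loop $\delta_{x_0}\#\beta\#\delta_{x_0}^{-1}$ traversed $k$ times, reparametrized to some period $T$, where $k$ is a large integer chosen uniformly in $x_0$.

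The core estimate is that the action of this loop is $\SSS_{\kappa}(\delta_{x_0}\#\beta\#\delta_{x_0}^{-1})=\SSS_{\kappa}(\delta_{x_0})+\SSS_{\kappa}(\beta)+\SSS_{\kappa}(\delta_{x_0}^{-1})$, and while $\SSS_{\kappa}(\delta_{x_0})+\SSS_{\kappa}(\delta_{x_0}^{-1})$ is bounded above by a constant $C$ independent of $x_0$ (by compactness of $M$ and continuity of the chosen family of paths), winding $k$ times around $\beta$ contributes $k\,\SSS_{\kappa}(\beta)$, which $\to-\infty$. Concretely, going $k$ times around and then twice along $\delta_{x_0}^{\pm1}$ gives action at most $C+k\,\SSS_{\kappa}(\beta)$, and since $\SSS_{\kappa}(\beta)<0$ we may fix $k$ with $C+k\,\SSS_{\kappa}(\beta)<0$. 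The homotopy $x(\sigma,x_0)$ interpolating from the constant loop $x_0$ to this final loop is built in the obvious way: first expand $x_0$ out to the based loop (a contraction run backwards, staying in a fixed compact set so the action stays bounded), then insert the $k$ copies of $\beta$; the period function $T(\sigma,x_0)$ is chosen continuously and positive throughout. Continuity in $(\sigma,x_0)$ follows since all ingredients depend continuously on $x_0$, and the target is hit with strictly negative action uniformly, so $(x,T)\in\mathcal{Z}_M$.

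The main obstacle is the uniform choice: one must make the whole construction depend continuously on $x_0$ while keeping the single integer $k$ (equivalently, the single upper bound $C$ on the connecting-path action) valid for all $x_0$ simultaneously. This is where compactness of $M$ is essential. A minor technical point is ensuring the interpolating curves remain in $W^{1,2}(\T,M)$ with continuously varying period and uniformly bounded action during the initial "un-contracting" stage; this is routine because the relevant family of loops sweeps out a compact subset of $\MM$ restricted to a bounded period interval, on which $\SSS_{\kappa}$ is continuous and hence bounded.
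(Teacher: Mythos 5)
There is a genuine gap, and it sits exactly where your construction looks most innocent: the continuous choice of a path $\delta_{x_0}$ from $x_0$ to the fixed base point $p$, for \emph{every} $x_0\in M$ simultaneously. Such a choice is a continuous map $\delta\colon M\to C([0,1],M)$ with $\delta_{x_0}(0)=x_0$ and $\delta_{x_0}(1)=p$, and then $(x_0,t)\mapsto\delta_{x_0}(t)$ is a homotopy from $\mathrm{id}_M$ to the constant map at $p$, i.e.\ a contraction of $M$. Closed manifolds are never contractible (this very fact is invoked in the proof of Theorem \ref{highthm}, part (ii)), so no such family exists. Neither of your suggested remedies repairs this: geodesics from $x_0$ to $p$ do not depend continuously on $x_0$ across the cut locus of $p$; ``patching'' choices made in the charts of a finite good cover is impossible (a partition of unity cannot average paths in $M$, and a successful global patching would again contract $M$); and pulling back a path near the diagonal of $M\times M$ only yields paths between \emph{nearby} points, not from an arbitrary $x_0$ to a fixed $p$. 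Without the family $\delta_{x_0}$ the map at $\sigma=1$ and the whole homotopy disappear.

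Everything else in your argument is sound --- the additivity of $\SSS_{\kappa}$ under concatenation, the uniform bound $C$ on the action of the connecting paths, the choice of $k$ with $C+k\,\SSS_{\kappa}(\beta)<0$ --- and indeed this is precisely how one handles a \emph{finite} set of base points, i.e.\ the $0$-skeleton $M_0$ of a CW-decomposition of $M$, which is the first step of the paper's proof. The actual content of the lemma is the extension over the higher-dimensional cells, and this is what Bangert's ``pulling one loop at a time'' technique accomplishes: over an edge joining $x_0$ to $x_1$ one must interpolate between the $h$-fold iterates $\alpha^h$ and $\beta^h$ of two \emph{different} negative loops, and one does so by transferring one copy of $\alpha$ into one copy of $\beta$ at a time, so that at each intermediate stage all but a bounded number of the $h$ factors are still loops of negative action and the total action stays negative once $h$ is large. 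Your proposal contains no substitute for this step, which is where the real work of Lemma \ref{notempty} lies.
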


We just sketch the proof, referring to \cite{tai83} for more details (see also \cite{tai10}). The argument follows closely  Bangert's technique of ``pulling one loop at a time'' (see \cite{ban80} and \cite{bk83}).

Let $M_0\subset M_1 \subset \dots \subset M_n = M$ be a CW-complex decomposition of $M$. Since $\kappa<c_u(L)$ and since the 0-skeleton $M_0$ is a finite set, it is easy to construct a continuous map
\[
z_0:[0,1]\times M \rightarrow \MM, \quad z_0(\sigma,x_0) = \bigl(y_0(\sigma,x_0),T_0(\sigma,x_0)\bigr),
\]
such that
\begin{enumerate}
\item $y_0(0,x_0) = x_0$ for every $x_0\in M$;
\item $\SSS_{\kappa}\circ z_0(1,x_0)<0$ for every $x_0\in M_0$.
\end{enumerate}
Given a positive integer $h$, we may iterate each loop $h$ times and obtain the map
\[
z_0^h: [0,1]\times M\rightarrow \MM, \quad z_0^h(\sigma,x_0) = \bigl(y_0^h(\sigma,x_0),hT_0(\sigma,x_0)\bigr),
\]
where
\[
y_0^h(\sigma,x_0)(s) := y_0(\sigma,x_0)(hs), \quad \forall (\sigma,x_0)\in [0,1]\times M, \; \forall s\in \T.
\]
Consider an edge $S$ in $M_1$ with end-points $x_0,x_1\in M_0$. The map $z_0^h(1,\cdot)$ maps the the end-points of $S$ into the $h$-th iterates $\alpha^h$ and $\beta^h$ of two loops $\alpha$ and $\beta$ with negative action $\SSS_{\kappa}$. By pulling one of the $h$ loops at a time from $\alpha^h$ to $\beta^h$, one can construct a new map from $S$ into $\MM$ with end-points $\alpha^h$ and $\beta^h$ and such that $\SSS_{\kappa}$ is negative on its image, provided that $h$ is large enough. By relying on the map $z_0^h$, this construction can be done globally, and one ends up with a continuous map
\[
z_1:[0,1]\times M\rightarrow \MM, \quad z_1(\sigma,x_0) = \bigl(y_1(\sigma,x_0),T_1(\sigma,x_0)\bigr),
\]
such that
\renewcommand{\theenumi}{\roman{enumi}}
\renewcommand{\labelenumi}{(\theenumi')}
\begin{enumerate}
\item $y_1(0,x_0) = x_0$ for every $x_0\in M$;
\item $\SSS_{\kappa}\circ z_1(1,x_0)<0$ for every $x_0\in M_1$.
\end{enumerate}
Iterating this process, one can construct continuous maps
\[
z_k:[0,1]\times M\rightarrow \MM, \quad z_k(\sigma,x_0) = \bigl(y_k(\sigma,x_0),T_k(\sigma,x_0)\bigr),
\]
such that
\renewcommand{\theenumi}{\roman{enumi}}
\renewcommand{\labelenumi}{(\theenumi'')}
\begin{enumerate}
\item $y_k(0,x_0) = x_0$ for every $x_0\in M$;
\item $\SSS_{\kappa}\circ z_k(1,x_0)<0$ for every $x_0\in M_k$.
\end{enumerate}
\renewcommand{\theenumi}{\roman{enumi}}
\renewcommand{\labelenumi}{(\theenumi)}
The map $z_n$ is an element of $\mathcal{Z}_M$. This concludes our sketch of the proof of Lemma \ref{notempty}.
The proof of the following result is analogous to the proof of Lemma \ref{Lem:mountain pass}.

\begin{Lemma}
\label{mtpass2}
Assume that $\min E <\kappa < c_u(L)$. Then there exists $a>0$ such that for every $z\in \mathcal{Z}_M$ there holds
\[
\max_{(\sigma,x_0) \in[0,1]\times M} \SSS_{\kappa} \bigl(z(\sigma,x_0)\bigr)\geq a.
\]
\end{Lemma}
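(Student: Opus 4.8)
The plan is to follow the proof of Lemma \ref{Lem:mountain pass} as closely as possible; the one genuinely new point is that, since now $\kappa\le e_0(L)$ is allowed, the quantity $\kappa-E(\gamma,0)$ occurring in the basic estimate (\ref{bbb}) need not be everywhere positive, so I first localize to a small ball on which $E(\cdot,0)$ is small, and I replace the elementary intermediate--value step of Lemma \ref{Lem:mountain pass} by a degree argument.

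\smallskip\noindent\emph{Step 1: a localized estimate.} Put $\delta:=\tfrac12\bigl(\kappa-\min E\bigr)>0$ and $K:=\{x\in M:E(x,0)\le\kappa-\delta\}$, a non-empty compact set containing every minimizer $\bar x$ of $x\mapsto E(x,0)$. Fix such an $\bar x$ and pick $\rho>0$ so small that the closed ball $\overline B:=\overline{B(\bar x,\rho)}$ lies in the interior of $K$ and in one of the chart domains $U_j$ of Lemma \ref{Lemma:isoper}; write $\Theta$ for the corresponding constant. Exactly as in (\ref{bbb})--(\ref{stma}), every loop $\gamma:\R/T\Z\to M$ with $\gamma(\R/T\Z)\subset\overline B$ satisfies
\[
\SSS_\kappa(\gamma,T)\ \ge\ \delta\,T-\Theta\,\ell(\gamma)^2+\frac{L_0}{T}\,\ell(\gamma)^2 .
\]
Choose $r\in\bigl(0,\min\{\rho/2,\ \sqrt{L_0\delta}/\Theta\}\bigr)$ and set $a:=r\bigl(2\sqrt{L_0\delta}-\Theta r\bigr)>0$. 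As in the last lines of the proof of Lemma \ref{Lem:mountain pass}, this inequality yields: \textbf{(A)} if $\gamma(\R/T\Z)\subset\overline B$ and $\ell(\gamma)=r$, then $\SSS_\kappa(\gamma,T)\ge a$; \textbf{(B)} if $\gamma(\R/T\Z)\subset\overline B$ and $\SSS_\kappa(\gamma,T)<0$, then $\ell(\gamma)>r$.

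\smallskip\noindent\emph{Step 2: the topological argument.} Let $z=(x,T)\in\mathcal Z_M$. Since a loop of length $\le r<\rho/2$ whose base point lies in $\overline{B(\bar x,\rho/2)}$ is automatically contained in $\overline B$, it is enough, by \textbf{(A)}, to find $(\sigma_0,x_0)$ with $x(\sigma_0,x_0)(0)\in\overline{B(\bar x,\rho/2)}$ and $\ell\bigl(x(\sigma_0,x_0)\bigr)=r$. Assume there is none. Write $D:=\overline{B(\bar x,\rho/2)}$, diffeomorphic to a closed $n$--ball, and for $x_0\in D$ consider the continuous path $\sigma\mapsto\bigl(x(\sigma,x_0)(0),\ell(x(\sigma,x_0))\bigr)$ in $M\times[0,\infty)$, which at $\sigma=0$ equals $(x_0,0)$; let $\tau(x_0)$ be the first time this path leaves the open set $\{(y,t):y\in B(\bar x,\rho/2),\ t<r\}$, with $\tau(x_0)=0$ for $x_0\in\partial D$. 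By our assumption the path cannot leave that set through $\{t=r\}$; and it must leave it, for otherwise $x(1,x_0)\subset\overline B$ with $\ell\le r$, whence $\SSS_\kappa(z(1,x_0))\ge0$ by \textbf{(B)}, contradicting $z\in\mathcal Z_M$. Hence it leaves through $\{y\in\partial B(\bar x,\rho/2)\}$, so that $x_0\mapsto x(\tau(x_0),x_0)(0)$ maps $D$ to $\partial D$ and is the identity on $\partial D$. After the routine regularization of the (a priori only lower semicontinuous) first--exit function, this is a retraction of the closed $n$--ball onto its boundary sphere, which is impossible. The contradiction produces the desired $(\sigma_0,x_0)$, and \textbf{(A)} gives $\SSS_\kappa(z(\sigma_0,x_0))\ge a$.

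\smallskip\noindent\emph{Main obstacle.} Step 1 is a mechanical adaptation of Lemma \ref{Lem:mountain pass}. The content is Step 2: one must exclude that $z$ shrinks the loops parametrized by $D$ to tiny loops and slides them out of $K$ into the region $\{E(\cdot,0)>\kappa\}$, where tiny loops already have negative $\SSS_\kappa$ and \textbf{(B)} fails. This is exactly where the hypothesis $\kappa>\min E$ (which makes $K$, hence $\bar x$, non-empty) and the global nature of $\mathcal Z_M$ as a sweep-out of \emph{all} of $M$ --- the input to the no--retraction argument --- are both needed; in Lemma \ref{Lem:mountain pass} no such difficulty arises because for $\kappa>e_0(L)$ the estimate of Step 1 holds on all of $M$ and mere connectedness of the interval $[0,1]$ suffices.
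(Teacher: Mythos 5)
The paper itself offers no proof of this lemma beyond the remark that it is ``analogous to the proof of Lemma \ref{Lem:mountain pass}'', so you are filling a real gap, and your overall strategy is the right one: localize near a minimizer $\bar x$ of $E(\cdot,0)$, where the estimate (\ref{stma}) still gives $\SSS_\kappa \ge \delta T - \Theta\ell^2 + (L_0/T)\ell^2$ for loops confined to a small ball, and then use the fact that $z(0,\cdot)$ is the identity on constant loops to force some member of the family to reach length $r$ while still based near $\bar x$. Step 1 is a correct adaptation, and your closing remark correctly identifies why a single path (the class $\mathcal{Z}_0$) is not enough below $e_0(L)$.

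The gap is in Step 2, in the sentence that dismisses the continuity of the exit map as a ``routine regularization''. The first-exit time $\tau$ of the path $\sigma\mapsto\bigl(x(\sigma,x_0)(0),\ell(x(\sigma,x_0))\bigr)$ from the open set $\Omega$ is only lower semicontinuous: if the path grazes $\partial\Omega$ and re-enters, nearby paths may exit much later and at a completely different boundary point, so $x_0\mapsto x(\tau(x_0),x_0)(0)$ is genuinely discontinuous, and no regularization of $\tau$ alone repairs this. A continuous $\tilde\tau\le\tau$ yields a map into $\overline{B(\bar x,\rho/2)}$ that is no longer forced onto the boundary sphere (and may even hit $\bar x$), while $\tilde\tau\ge\tau$ yields a map that may land anywhere in $M$; in neither case do you get a retraction. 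The clean way to finish is to drop exit times and invoke the homotopy invariance of the mod~$2$ Brouwer degree on varying domains: set $U:=\{(\sigma,x_0)\,:\,\ell(x(\sigma,x_0))<r,\ x(\sigma,x_0)(0)\in B(\bar x,\rho/2)\}$, an open subset of $[0,1]\times M$ whose slice at $\sigma=0$ is $B(\bar x,\rho/2)$ and whose slice at $\sigma=1$ is empty by your estimate \textbf{(B)}; under your contradiction hypothesis the continuous evaluation map $e(\sigma,x_0)=x(\sigma,x_0)(0)$ sends $\partial U$ into $\partial B(\bar x,\rho/2)$, so $\deg_2\bigl(e(\sigma,\cdot),U_\sigma,\bar x\bigr)$ would equal $1$ at $\sigma=0$ and $0$ at $\sigma=1$, a contradiction. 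This degree (or an equivalent homological) argument is an indispensable extra ingredient, not a cosmetic fix of the exit-time map, and it should be carried out explicitly.
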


\section{Periodic orbits with low energy}
\label{pole}

\paragraph{\bf The Struwe monotonicity argument} When $\kappa\leq c_u(L)$, the periods in a (PS) sequence need not be bounded anymore. Because of this fact, the question of the existence of periodic orbits for every energy $\kappa$ in the interval $[\min E,c_u(L)]$ is open, although no counterexamples are known. 
The known system which is closer to being a counterexample is the horocycle flow on a closed surface $M$ with constant negative curvature (see e.g. \cite{man91, cmp04}): Such a flow has no periodic orbits (actually, every orbit is dense) and it is the restriction of a Hamiltonian flow to an energy surface at a Ma\~{n}\'e critical value, but the corresponding Lagrangian is well defined only on the (non compact) universal cover of $M$ (such a system belongs to the family of {\em non-exact magnetic flows}, whereas only {\em exact} magnetic flows can be described by a Tonelli Lagrangian on $TM$).

The following argument is a version of an argument of Struwe, which says that when dealing with a minimax value associated to a family of functionals depending on a real parameter in a suitable monotone way, there exist compact (PS) sequences for almost every value of the parameter. This argument has applications both to Hamiltonian periodic orbits and to semilinear elliptic equations
(see \cite{str90}, \cite[section II.9]{str00} and references therein).

Let us assume that $\min E<c_u(L)$, otherwise the interval of low energies collapses to a single level and there is nothing to prove.
Given $\kappa\in (\min E,c_u(L))$, let $\Gamma$ be the set of the images of the maps either in $\mathcal{Z}_0$ or in $\mathcal{Z}_M$, which were introduced in the previous section: If $e_0(L)< \kappa < c_u(L)$ we may take $\mathcal{Z}_0$, while in general we should take $\mathcal{Z}_M$. Let $I$ be either the interval $(e_0(L),c_u(L))$ - if we are dealing with $\mathcal{Z}_0$ - or the interval $(\min E,c_u(L))$ - if we are dealing with $\mathcal{Z}_M$. For every $\kappa\in I$, consider the minimax value
\begin{equation}
\label{minimax}
c(\kappa) := \inf_{K \in \Gamma} \max_{(x,T)\in K} \SSS_{\kappa} (x,T).
\end{equation}
By Lemmas \ref{Lem:mountain pass}, \ref{notempty}, and \ref{mtpass2}, $c(\kappa)$ is finite and positive, and since $\SSS_{\kappa}$ depends monotonically on $\kappa$, the function
\[
c:I \rightarrow (0,+\infty)
\]
is weakly increasing. By Lebesgue Theorem, the set of points of $I$ at which $c$ has a linear modulus of continuity, that is
\[
J := \bigl\{ \bar\kappa\in I\, \big| \, \exists \delta>0, \; \exists M>0 \mbox{ s.t. } |c(\kappa) - c(\bar{\kappa})| \leq M |\kappa - \bar{\kappa}| \mbox{ for every } \kappa \in I \mbox{ with }|\kappa - \bar\kappa| < \delta\bigr\},
\]
has full Lebesgue measure in $I$.

\begin{Lemma}
\label{strarg}
If $\bar \kappa\in J$, then $\SSS_{\bar\kappa}$ admits a bounded (PS) sequence at level $c(\bar\kappa)$, which consists of contractible loops.
\end{Lemma}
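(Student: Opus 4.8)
The plan is to exploit the Lipschitz-type control on $c$ near $\bar\kappa\in J$ to produce, along a minimizing sequence of minimax sets, a family of points whose period $T$ is uniformly bounded, and then to feed these into a truncated minimax principle (Remark \ref{trunc}) together with Lemma \ref{PST0} and Lemma \ref{Lem:4} to extract a $(\PS)_{c(\bar\kappa)}$ sequence. First I would fix $\delta>0$ and $M>0$ as in the definition of $J$, pick a sequence $\kappa_n\downarrow\bar\kappa$ with $\kappa_n-\bar\kappa<\delta$, and for each $n$ choose $K_n\in\Gamma$ with $\max_{K_n}\SSS_{\kappa_n}\leq c(\kappa_n)+(\kappa_n-\bar\kappa)$. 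The key estimate is the elementary splitting $\SSS_{\kappa_n}(x,T)=\SSS_{\bar\kappa}(x,T)+(\kappa_n-\bar\kappa)T$, which gives, for any $(x,T)\in K_n$ at which $\SSS_{\bar\kappa}$ is not too small (say $\SSS_{\bar\kappa}(x,T)\geq c(\bar\kappa)-(\kappa_n-\bar\kappa)$),
\[
(\kappa_n-\bar\kappa)T = \SSS_{\kappa_n}(x,T)-\SSS_{\bar\kappa}(x,T)\leq c(\kappa_n)+(\kappa_n-\bar\kappa)-c(\bar\kappa)+(\kappa_n-\bar\kappa)\leq (M+2)(\kappa_n-\bar\kappa),
\]
hence $T\leq M+2$ for all such points. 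In other words, on the part of $K_n$ where $\SSS_{\bar\kappa}$ is close to or above its minimax level, the period is controlled uniformly in $n$.

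Next I would run the truncated minimax argument. Fix a level $b$ with $0<b<c(\bar\kappa)$ and consider the negative gradient flow of $\SSS_{\bar\kappa}$ truncated below level $b$, as in Remark \ref{trunc}; since $\Gamma$ consists of images of maps $z\in\mathcal Z_0$ (or $\mathcal Z_M$) and the truncated flow fixes sublevels of $\SSS_{\bar\kappa}$ below $b$, it leaves the relevant boundary/initial conditions unchanged — the constant loops and the negative-action endpoints are preserved because the flow does not move points where $\SSS_{\bar\kappa}<b$, provided $b$ is chosen below $0$ as well, so actually it is cleanest to take $b<0<a\leq c(\bar\kappa)$ using Lemma \ref{Lem:mountain pass}/\ref{mtpass2}, so that $\Gamma$ is positively invariant under this truncated flow. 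One must also check completeness: by Lemma \ref{Lem:3}(i) the sublevels of $\SSS_{\bar\kappa}$ are complete on $\MM^{\mathrm{noncontr}}$, and Lemma \ref{Lem:4} shows that a flow line on $\MM^{\mathrm{contr}}$ with $T\to 0$ has $\SSS_{\bar\kappa}\to 0<b$, so it gets stopped by the truncation before reaching the non-complete end; hence the truncated flow is defined for all positive time on the region $\{\SSS_{\bar\kappa}>b\}$. Then the standard deformation argument of Theorem \ref{mp}/\ref{thm:finite c induces PS} applies: if $\SSS_{\bar\kappa}$ had no $(\PS)_{c(\bar\kappa)}$ sequence, one could push each $K_n$ (or a single near-optimal $K$) strictly below $c(\bar\kappa)$, contradicting the definition of $c(\bar\kappa)$. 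This yields a $(\PS)_{c(\bar\kappa)}$ sequence $(x_h,T_h)$.

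It remains to arrange that this sequence is \emph{bounded} and consists of contractible loops. Contractibility is automatic: every element of $\mathcal Z_0$ or $\mathcal Z_M$ has image in $\MM^{\mathrm{contr}}$ (the first coordinate starts at a constant loop), and the truncated flow preserves connected components, so any $(\PS)_{c(\bar\kappa)}$ sequence produced this way lies in $\MM^{\mathrm{contr}}$. Boundedness of $(T_h)$ is where one must combine the two ingredients: on one hand $c(\bar\kappa)>0$ together with Lemma \ref{PST0} forces $T_h$ bounded \emph{away from zero} (a $(\PS)_{c}$ sequence with $T_h\to 0$ has $c=0$); on the other hand, the period upper bound is obtained exactly as in the displayed estimate above — one extracts the $(\PS)$ points from the region of the deformed $K_n$ where $\SSS_{\bar\kappa}$ is within $o(1)$ of $c(\bar\kappa)$, and there $T\leq M+2$. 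Concretely, a careful bookkeeping in the deformation argument (as in Struwe's original argument) shows that when $\SSS_{\bar\kappa}$ fails $(\PS)_{c(\bar\kappa)}$ one can deform $K_n$ below $c(\bar\kappa)$ using only points with $\SSS_{\bar\kappa}$ bounded, hence $T$ bounded, so the contradiction already lives in the bounded-period regime; equivalently, the $(\PS)$ sequence that survives can be chosen with $T_h\leq M+3$ for $h$ large.

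The main obstacle is the last point: turning the ``there exist near-optimal sets $K_n$ on which $\SSS_{\bar\kappa}$-large points have bounded period'' into ``there exists a \emph{single} $(\PS)_{c(\bar\kappa)}$ sequence with bounded period.'' The subtlety is that the minimax set realizing $c(\bar\kappa)$ need not be one of the $K_n$, and along the negative gradient deformation the period could a priori grow. The resolution — and the technical heart of Struwe's method — is to perform the deformation argument directly on $K_n$ for the functional $\SSS_{\bar\kappa}$, noting that the deformation only increases $T$ where $\SSS_{\bar\kappa}$ is \emph{increasing}, which never happens along the negative gradient flow; so the period bound $T\leq M+2$ on the $\SSS_{\bar\kappa}$-maximal part of $K_n$ is preserved under the flow, and the contradiction to the definition of $c(\bar\kappa)$ is reached using only these bounded-period points. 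I expect the write-up to require care in making the ``$\SSS_{\bar\kappa}$-large part'' precise (e.g. working with $\{\SSS_{\bar\kappa}\geq c(\bar\kappa)-\epsilon_n\}\cap K_n$ for suitable $\epsilon_n\to 0$) and in verifying that the truncated flow stays away from the non-complete end, but no new idea beyond Lemmas \ref{Lem:3}, \ref{Lem:4}, \ref{PST0} and the minimax principle is needed.
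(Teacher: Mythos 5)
Your skeleton is the same as the paper's (Struwe's monotonicity trick: near-optimal sets $K_h\in\Gamma$ for $\SSS_{\kappa_h}$ with $\kappa_h\downarrow\bar\kappa$, the splitting $\SSS_{\kappa_h}-\SSS_{\bar\kappa}=(\kappa_h-\bar\kappa)T$ plus the Lipschitz bound at $\bar\kappa\in J$ giving $T\leq M+2$ on the part of $K_h$ where $\SSS_{\bar\kappa}>c(\bar\kappa)-\epsilon_h$, then a deformation argument), and you correctly isolate the crux: this period bound lives on the \emph{undeformed} $K_h$ and must survive the deformation. But your resolution of that crux is wrong. You claim that ``the deformation only increases $T$ where $\SSS_{\bar\kappa}$ is increasing, which never happens along the negative gradient flow.'' This is false: along $-\nabla\SSS_{\bar\kappa}$ the period component satisfies $T'(\sigma)=-\frac{\partial\SSS_{\bar\kappa}}{\partial T}=\frac{1}{T}\int_0^T\bigl(E(\gamma,\gamma')-\bar\kappa\bigr)\,dt$ by (\ref{eq:diff A w.r.t. T}), which is positive whenever the mean energy of the loop exceeds $\bar\kappa$ --- perfectly compatible with $\SSS_{\bar\kappa}$ strictly decreasing. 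So $T$ can grow along the flow and your ``preservation of $T\leq M+2$'' has no justification. The paper's actual fix is different: replace $-\nabla\SSS_{\bar\kappa}$ by a \emph{bounded} pseudo-gradient (a non-negative rescaling vanishing on $\{\SSS_{\bar\kappa}\leq c(\bar\kappa)/4\}$ and satisfying the uniform decrease condition (\ref{ud}) above $c(\bar\kappa)/2$), and deform only for time in $[0,1]$. The high part of $K_h$ sits in the set $A_h=\{T\leq M+2,\ \SSS_{\bar\kappa}\leq c(\bar\kappa)+(M+1)\epsilon_h\}$, which is bounded in $\MM$ by the estimate (\ref{isbd}); a bounded vector field moves bounded sets a bounded distance in unit time, so $\phi([0,1]\times K_h)$ lies in a uniformly bounded $B_h$ union the sublevel $\{\SSS_{\bar\kappa}\leq c(\bar\kappa)-\epsilon_h\}$, and the $(\PS)$ sequence is extracted from $B_h\cap\{\SSS_{\bar\kappa}\geq c(\bar\kappa)-\epsilon_h\}$. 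Its boundedness (hence the period bound) comes from the boundedness of $B_h$, not from any monotonicity of $T$ along the flow.

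A secondary but real error: you truncate the flow below a level $b<0$ and invoke Lemma \ref{Lem:4} for completeness. That lemma says a flow line with $T\to0$ has $\SSS_{\bar\kappa}\to0$, and since $0>b$ such a flow line is \emph{not} stopped by a truncation at a negative level; it still reaches the incomplete end $T=0$ in finite time (your own inequality ``$\SSS_{\bar\kappa}\to 0<b$'' contradicts your choice $b<0$). The truncation level must be strictly positive and below $c(\bar\kappa)$, as in the paper's choice $c(\bar\kappa)/4$: then Lemma \ref{Lem:4} guarantees that any flow line heading towards $T=0$ first enters the region where the vector field vanishes, so the truncated flow is complete.
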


\begin{proof}
First recall that $\Gamma$ is a class of subsets of $\MM^{\mathrm{contr}}$.
Let $(\kappa_h)\subset I$ be a strictly decreasing sequence which converges to $\bar\kappa$, and set $\epsilon_h:=\kappa_h-\bar \kappa\downarrow0$. We pick $K_h \in \Gamma$ such that
\[
\max_{K_h}\SSS_{\kappa_h}\leq c(\kappa_h)+\epsilon_h.
\]
Let $z=(x,T)\in K_h$ be such that $\SSS_{\bar \kappa}(z)>c(\bar \kappa)-\epsilon_h$. Since $\bar\kappa$ belongs to $J$, we have
\[
T = \frac{\SSS_{\kappa_h}(z)-\SSS_{\bar \kappa}(z)}{\kappa_h-\bar\kappa} \leq \frac{c(\kappa_h)+\epsilon_h-c(\bar\kappa)+\epsilon_h}{\epsilon_h}\leq M+2.
\]
Moreover,
\[
\SSS_{\bar \kappa}(z) \leq \SSS_{\kappa_h}(z) \leq c(\kappa_h)+\epsilon_h \leq c(\bar \kappa)+(M+1)\epsilon_h.
\]
By the above considerations,
\[
K_h \subset A_h \cup\big\{\SSS_{\bar \kappa}\leq c(\bar \kappa)-\epsilon_h\big\},
\]
where
\[
A_h := \big\{(x,T)\,\big|\, T\leq M+2\mbox{ and }\SSS_{\bar \kappa} (x,T) \leq c(\bar \kappa)+(M+1)\epsilon_h\big\}.
\]
If $(x,T)$ belongs to $A_h$, we have the estimate
\[
\SSS_{\bar\kappa}(x,T)  \geq \frac{L_0}{M+2}\|x'\|_2^2-(M+2)(L_1-\bar\kappa),
\]
(see (\ref{isbd})), which shows that $A_h$ is bounded in $\MM$, uniformly in $h$. Let $\phi$ be the flow of the vector field obtained by multiplying $-\nabla\SSS_{\bar \kappa}$ by a suitable non-negative function, whose role is to make the vector field bounded on $\MM$ and vanishing on the sublevel $\{\SSS_{\bar\kappa} \leq c(\bar\kappa)/4\}$, while keeping the uniform decrease condition
\begin{equation}
\label{ud}
\frac{d}{d\sigma} \SSS_{\bar\kappa} \bigl(\phi_{\sigma}(z)\bigr) \leq - \frac{1}{2} \min \bigl\{
\|d\SSS_{\bar\kappa} (\phi_{\sigma}(z)) \|^2,1 \bigr\}, \quad \mbox{if } \SSS_{\bar\kappa} (\phi_{\sigma}(z)) \geq c(\bar\kappa)/2.
\end{equation}
See (\ref{decr}) and Remarks \ref{noncomp}, \ref{trunc}. Then Lemma \ref{Lem:4} implies that $\phi$ is well-defined on $[0,+\infty[\times \MM$, and the class of sets $\Gamma$ is positively invariant with respect to $\phi$. Since $\phi$ maps bounded sets into bounded sets, we have
\begin{equation}
\label{dove}
\phi([0,1]\times K_h) \subset B_h \cup \bigl\{\SSS_{\bar \kappa}\leq c(\bar \kappa)-\epsilon_h \bigr\},
\end{equation}
for some uniformly bounded set
\begin{equation}
\label{bdd}
B_h\subset \bigl\{\SSS_{\bar \kappa}\leq c(\bar \kappa)+(M+1)\epsilon_h \bigr\}.
\end{equation}
We claim that there exists a sequence $(z_h)\subset \MM^{\mathrm{contr}}$ with
\[
z_h \in B_h\cap \bigl\{\SSS_{\bar \kappa}\geq c(\bar \kappa)-\epsilon_h \bigr\},
\]
and $\| d\SSS_{\bar \kappa}(z_h)\|$ infinitesimal. Such a sequence is clearly a bounded (PS) sequence at level $c(\bar\kappa)$.
Assume, by contradiction, the above claim to be false. Then there exists $0<\delta<1$ which satisfies
\[
\|d\SSS_{\bar \kappa}\| \geq \delta \quad \mbox{on } B_h \cap \bigl\{\SSS_{\bar \kappa}\geq c(\bar \kappa)-\epsilon_h\bigr\},
\]
for every $h$ large enough. Together with (\ref{ud}), (\ref{dove}) and (\ref{bdd}), this implies that, for $h$ large enough, for any $z\in K_h$ such that
\[
\phi([0,1]\times \{z\}) \subset \bigl\{ \SSS_{\kappa} \geq c(\bar \kappa)-\epsilon_h\bigr\},
\]
there holds
\[
\SSS_{\bar \kappa} \big(\phi_1(z)\big) \leq \SSS_{\bar \kappa}(z)- \frac{1}{2} \delta^2 \leq c(\bar \kappa)+ (M+1) \epsilon_h- \frac{1}{2} \delta^2.
\]
It follows that
\[
\max_{\phi_1(K_h)} \SSS_{\bar \kappa} \leq c(\bar \kappa)-\epsilon_h,
\]
for $h$ large enough. Since $\phi_1(K_h)$ belongs to $\Gamma$, this contradicts the definition of $c(\bar\kappa)$ and concludes the proof.
\end{proof}

\paragraph{\bf Existence of periodic orbits of low energy} We are finally ready to prove the following result, which is statement (iii) in the theorem of the Introduction:

\begin{Thm}
\label{ae}
For almost every $\kappa \in (\min E,c_u(L))$, there is a contractible periodic orbit $\gamma$ of energy $E(\gamma,\gamma')=\kappa$ and positive action $\SSS_{\kappa}(\gamma)=c(\kappa)$.
\end{Thm}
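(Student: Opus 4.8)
The plan is to combine the Struwe monotonicity argument of Lemma \ref{strarg} with the compactness results of Section \ref{pss}. Set $I=(\min E,c_u(L))$, take $\Gamma$ to be the set of images of the maps in $\mathcal{Z}_M$, and consider the minimax value $c(\kappa)$ of \eqref{minimax} for $\kappa\in I$. By Lemma \ref{notempty} the class $\Gamma$ is non-empty, and by Lemma \ref{mtpass2} one has $0<c(\kappa)<+\infty$; moreover $c:I\rightarrow(0,+\infty)$ is weakly increasing since $\SSS_{\kappa}$ is pointwise monotone in $\kappa$. Hence, as observed before Lemma \ref{strarg}, the set $J\subset I$ on which $c$ has a linear modulus of continuity has full Lebesgue measure. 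It therefore suffices to produce, for each $\bar\kappa\in J$, a contractible periodic orbit of energy $\bar\kappa$ with action $c(\bar\kappa)$.

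Fix $\bar\kappa\in J$. By Lemma \ref{strarg}, $\SSS_{\bar\kappa}$ admits a bounded $(\PS)$ sequence $(x_h,T_h)\subset\MM^{\mathrm{contr}}$ at level $c(\bar\kappa)$. Boundedness in $\MM$ gives $T_h\leq T^*$ for some $T^*<\infty$. I claim that $\liminf_h T_h>0$: otherwise, after passing to a subsequence, $T_h\rightarrow 0$, and $(x_h,T_h)$ would still be a $(\PS)_{c(\bar\kappa)}$ sequence, so Lemma \ref{PST0} would force $c(\bar\kappa)=0$, contradicting $c(\bar\kappa)>0$. Discarding finitely many terms we may thus assume $0<T_*\leq T_h\leq T^*<\infty$. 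Now Lemma \ref{Lem;2} applies and yields, up to a further subsequence, convergence $(x_h,T_h)\rightarrow(x,T)$ in $\MM$; since $\MM^{\mathrm{contr}}$ is open and closed in $\MM$, the limit lies in $\MM^{\mathrm{contr}}$.

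Because $\SSS_{\bar\kappa}\in C^{1,1}(\MM)$, passing to the limit in $\SSS_{\bar\kappa}(x_h,T_h)\rightarrow c(\bar\kappa)$ and $d\SSS_{\bar\kappa}(x_h,T_h)\rightarrow 0$ shows that $(x,T)$ is a critical point of $\SSS_{\bar\kappa}$ with $\SSS_{\bar\kappa}(x,T)=c(\bar\kappa)>0$; in particular $(x,T)$ is not a constant loop, since on constant loops $\SSS_{\bar\kappa}$ vanishes at its critical points. By the characterization of critical points of the free period action functional established in Section \ref{sec3}, the curve $\gamma(t):=x(t/T)$ is a non-constant $T$-periodic solution of the Euler--Lagrange equation lying on $E^{-1}(\bar\kappa)$, it is contractible because $x$ is, and $\SSS_{\bar\kappa}(\gamma)=c(\bar\kappa)$. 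Letting $\bar\kappa$ range over the full-measure set $J$ completes the proof.

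The only delicate point is the exclusion of the collapse $T_h\to 0$: this is precisely where the strict positivity of the minimax level $c(\bar\kappa)$ is used, and it is this positivity --- guaranteed by the mountain-pass geometry of Lemmas \ref{Lem:mountain pass} and \ref{mtpass2}, which in turn requires $\bar\kappa>\min E$ --- together with the non-emptiness of $\mathcal{Z}_M$ for $\bar\kappa<c_u(L)$ via Lemma \ref{notempty}, that pins down the energy range. The restriction to \emph{almost every} $\kappa$ is forced by the failure of the Palais--Smale condition below $c_u(L)$: Struwe's argument only recovers a $(\PS)$ sequence which is bounded (and hence, after ruling out period collapse, convergent by Lemma \ref{Lem;2}) for parameters in $J$, and whether an orbit exists for the remaining null set of energies is left open.
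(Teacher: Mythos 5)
Your proposal is correct and follows exactly the paper's own argument: the minimax setup with $\mathcal{Z}_M$ and the full-measure set $J$, then Lemma \ref{strarg} to produce a bounded $(\PS)_{c(\bar\kappa)}$ sequence, Lemma \ref{PST0} together with $c(\bar\kappa)>0$ to rule out period collapse, and Lemma \ref{Lem;2} to extract a convergent subsequence whose limit is the desired critical point. The extra details you supply (openness of $\MM^{\mathrm{contr}}$, non-constancy of the limit) are consistent with, and only elaborate on, the paper's proof.
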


\begin{proof}
Let $\kappa$ be an element of the full measure set $J\subset I$. 
By Lemma \ref{strarg}, $\SSS_{\kappa}$ admits a (PS)$_{c(\kappa)}$ sequence $(x_h,T_h)\subset \MM^{\mathrm{contr}}$ with $(T_h)$ bounded. By Lemma \ref{PST0}, $(T_h)$ is bounded away from zero, because $c(\kappa)>0$.
By Lemma \ref{Lem;2}, the sequence $(x_h,T_h)$ has a limiting point in $\MM^{\mathrm{contr}}$, which gives us the required contractible periodic orbit.
\end{proof}

\begin{Rmk}
The existence of a periodic orbit for almost energy level in $(\min E,e_0(L))$ can be proved also by an argument from symplectic topology. In fact, let $H:T^*M \rightarrow \R$ be the Hamiltonian which is Legendre dual to $L$.
The fact that $\kappa< e_0(L)$ implies that the restriction of the projection $T^*M\rightarrow M$ to $H^{-1}(\kappa)$ is not surjective. Therefore, one can build a Hamiltonian diffeomorphism of $T^*M$ which displaces $H^{-1}(\kappa)$ from itself (see \cite[Proposition 8.2]{con06}). Sets which are displaceable by a Hamiltonian diffeomorphism have finite $\pi_1$-sensitive Hofer-Zehnder capacity (see \cite{sch06} and \cite{fs07}) and this fact implies the almost everywhere existence result for periodic orbits (see \cite{hz94}). See \cite[Corollary 8.3]{con06} for more details on such a proof.
\end{Rmk}

The next result shows that stable energy levels of Tonelli Hamiltonians posses periodic orbits, proving statement (iv) of the theorem in the Introduction. In particular, the same is true for contact type energy levels.

\begin{Cor}
Assume that $\kappa$ is a regular value of the Tonelli Hamiltonian $H\in C^{\infty}(T^*M)$ and that the hypersurface
$\Sigma=H^{-1}(\kappa)$ is stable. Then $\Sigma$ carries a periodic orbit.
\end{Cor}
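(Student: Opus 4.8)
The plan is to exploit the key dynamical feature of stable hypersurfaces established in the preceding proposition and discussion: a stable $\Sigma$ is the zero‑level of a \emph{stabilizing} family, i.e.\ there is a diffeomorphism $(-\epsilon,\epsilon)\times\Sigma\to T^*M$, $(r,x)\mapsto\psi_r(x)$, onto a neighborhood of $\Sigma$ such that $\psi_0=\id$ and $\psi_r$ carries $\mathcal L_\Sigma$ to $\mathcal L_{\Sigma_r}$, where $\Sigma_r:=\psi_r(\Sigma)$. Consequently the characteristic dynamics on each $\Sigma_r$ is conjugate, up to time reparametrization, to that on $\Sigma$, so it suffices to produce a periodic orbit on \emph{some} $\Sigma_r$, hence on $\Sigma_{r}$ for a well‑chosen sequence of $r$'s.

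First I would set up a one‑parameter family of Tonelli Hamiltonians whose energy levels at a fixed value are precisely the $\Sigma_r$. Concretely, near $\Sigma$ write points of $T^*M$ via $(r,x)$ and let $G(r,x):=r$ in this collar, suitably interpolated to a globally defined proper function $G$ on $T^*M$ that is fiberwise convex and superlinear outside a compact set, arranged so that $G$ is again Legendre‑dual to a Tonelli Lagrangian $L_G$ on $TM$ and so that $G^{-1}(r)=\Sigma_r$ for $r$ in a neighborhood of $0$. (One may equivalently keep $H$ and instead reparametrize the values: the function $\kappa\mapsto H|_{\text{collar}}$ expressed in the collar coordinate.) Then Theorem~\ref{ae} applied to the Tonelli Lagrangian $L_G$ yields, for almost every value $r$ in an interval around $0$, a periodic orbit of the Euler–Lagrange flow of $L_G$ of energy $r$; transporting through the Legendre transform, this is a periodic orbit of $X_G$ on $\Sigma_r$. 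Pick any sequence $r_j\to 0$ of such ``good'' values.

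Next I would push these orbits back onto $\Sigma=\Sigma_0$ and take a limit. For each $j$ the stabilizing diffeomorphism identifies the characteristic foliation of $\Sigma_{r_j}$ with that of $\Sigma$, so the periodic orbit $\gamma_j\subset\Sigma_{r_j}$ corresponds to a closed characteristic $\tilde\gamma_j\subset\Sigma$, i.e.\ a periodic orbit of $X_H|_\Sigma$ up to reparametrization, of some period $T_j>0$. The remaining task is a compactness argument: one needs a \emph{uniform} lower bound $T_j\geq T_*>0$ on the periods and a \emph{uniform} upper bound $T_j\leq T^*<\infty$. The lower bound is standard and comes from the stability structure (the contact‑like one‑form $\eta$ with $\eta(\mathcal L_\Sigma)\ne 0$, together with $d\eta$ being controlled on $\ker\eta$, bounds from below the $\eta$‑action of any closed characteristic, hence its period, away from zero). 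For the upper bound one uses that the $\SSS_{r_j}$‑action of $\gamma_j$ equals $c(r_j)$, and that $c$ is monotone and finite on the interval, so $c(r_j)$ stays bounded; combined with the energy identity \eqref{eq:diff A w.r.t. T} and the electromagnetic bounds \eqref{boundsonL} this controls $T_j$ and the $W^{1,2}$‑norm of the underlying loops, exactly as in Lemmas~\ref{PST0} and~\ref{Lem;2}. Then Lemma~\ref{Lem;2} (or a direct Ascoli–Arzel\`a plus weak‑$W^{1,2}$ argument) gives a subsequence of the $\tilde\gamma_j$ converging to a closed curve $\tilde\gamma_\infty$ in $T^*M$; since the energy values $r_j\to 0$, $\tilde\gamma_\infty$ lies on $\Sigma=\Sigma_0$, it is nonconstant (because of the uniform lower period bound), and it is an orbit of $X_H|_\Sigma$, i.e.\ the desired periodic orbit.

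The main obstacle I expect is the passage to the limit, and specifically securing the two uniform period bounds \emph{simultaneously}; the lower bound must be extracted cleanly from the stability data (this is where stability, rather than mere regularity, is essential), and the upper bound requires that the minimax levels $c(r_j)$ stay bounded \emph{and bounded away from zero} as $r_j\to 0$, which one gets from the monotonicity of $c$ and positivity on the half‑open interval, but care is needed near the endpoint where the stabilizing collar degenerates. A secondary technical point is ensuring that the interpolated Hamiltonian $G$ is genuinely Tonelli and that its Euler–Lagrange periodic orbits really do correspond, via the stabilizing diffeomorphism, to closed characteristics of the original $\Sigma$; this is where the isomorphism $\mathcal L_{\Sigma}\cong\mathcal L_{\Sigma_r}$ from the preceding proposition does the work, and it should be invoked explicitly rather than reproved.
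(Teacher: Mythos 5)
Your first two paragraphs reproduce the paper's argument: stability gives the collar $(-\epsilon,\epsilon)\times\Sigma$, the hypersurfaces $\Sigma_r$ are realized as level sets of a single Tonelli Hamiltonian (the paper invokes \cite{mp10} for this construction rather than sketching the interpolation), and the almost-everywhere existence results produce a periodic orbit on $\Sigma_{r_0}$ for some good value $r_0$. One small omission: since the Ma\~n\'e critical value of the new Lagrangian may sit anywhere relative to the interval of levels, you should quote Theorem~\ref{highthm} alongside Theorem~\ref{ae}, so that all values except the single (measure-zero) level $c_u$ are covered. The real issue is your third paragraph, which is entirely superfluous --- and you in effect say so yourself in your opening paragraph. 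Once you have one closed characteristic on one $\Sigma_{r_0}$, the stabilizing diffeomorphism carries $\mathcal{L}_{\Sigma}$ isomorphically onto $\mathcal{L}_{\Sigma_{r_0}}$, so its inverse sends that orbit to a closed integral curve of $\mathcal{L}_{\Sigma}$ on $\Sigma$ itself; since $\kappa$ is a regular value, $X_H|_{\Sigma}$ spans $\mathcal{L}_{\Sigma}$, and a reparametrization of this curve is the desired periodic orbit. The proof ends there. The limit $r_j\to 0$ with uniform period bounds and Ascoli--Arzel\`a is not only unnecessary, it is precisely the argument that fails in general (this is why ``almost every'' cannot be upgraded to ``every'' without extra hypotheses), and several of its ingredients as you state them are not secured: the uniform lower bound on the periods of the transported characteristics is asserted rather than proved, and the values $c(r_j)$ are minimax levels of different functionals associated to a different Lagrangian, so their boundedness and positivity as $r_j\to 0$ would require an argument. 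Fortunately none of this is needed: stability is used exactly so that one does not have to take a limit.
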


\begin{proof}
By stability, we can find a diffeomorphism 
\[
(-\epsilon, \epsilon) \times \Sigma \rightarrow T^*M, \qquad (r,x) \mapsto \psi_r(x),
\]
onto an open neighborhood of $\Sigma$ such that $\psi_0$ is the identity on $\Sigma$ and 
\[
\psi_r : \Sigma \rightarrow \Sigma_r := \psi_r(\Sigma)
\]
induces an isomorphism between the line bundles $\mathcal{L}_{\Sigma}$ and $\mathcal{L}_{\Sigma_r}$. Up to the choice of a smaller $\epsilon$, we may assume that all the hypersurfaces $\Sigma_r$ are levels of a uniformly convex function. Therefore, they are the level sets of a Tonelli Hamiltonian $K\in C^{\infty}(T^* M )$ (see \cite{mp10} for a detailed construction of $K$). Since the Legendre transform of $K$ is a Tonelli Lagrangian, Theorems \ref{highthm} and \ref{ae} imply that $K^{-1}(\kappa)$ has periodic orbits for almost every $\kappa$. In particular, $\Sigma_r$ has periodic orbits for almost every $r$, but since the dynamics on $\Sigma_r$ and on $\Sigma$ are conjugated, the same is true for $\Sigma$. 
\end{proof}

\begin{Rmk}
The above proof shows the usefulness of having a theory which works with Tonelli Lagrangians, rather than just electromagnetic ones. In fact even if the stable hypersurface $\Sigma$ is the level set of an electromagnetic Hamiltonian (that is, it is fiberwise an ellipsoid), the hypersurfaces $\Sigma_r$ given by the stability assumption may be more general fiberwise uniformly convex hypersurfaces. 
\end{Rmk}

\begin{Rmk}
If $E^{-1}(\kappa)$ is of contact type and $\pi_* : H_1(E^{-1}(\kappa),\R) \rightarrow H_1(M,\R)$ is injective, then $\SSS_{\kappa}$ satisfies the Palais-Smale condition (with a suitable choice of the metric of $\mathcal{M}$). See \cite[Proposition F]{con06}. Therefore, in this case the existence of a periodic orbit can be obtained also without using stability.
\end{Rmk}

\begin{Rmk}
It can be proved that when $M$ is a closed surface and $L$ is of the form (\ref{elmag}) with $V=0$ (that is, in the case of exact magnetic flows), there are periodic orbits on {\em every} energy level below $c_0(L)$ (see  \cite{tai92b}, \cite{tai92c}, \cite{tai92} and \cite{cmp04}). In fact, the advantage of dealing with a surface is that when $\kappa<c_0(L)$ one can minimize $\SSS_{\kappa}$ on a suitable space of {\em embedded} closed curves. In the same setting, one can prove that for almost every energy level below $c_u(L)$ there are infinitely many periodic orbits, at least if all periodic orbits are assumed to be non-degenerate (see \cite{amp13}).
\end{Rmk}

\paragraph{\textbf{The two Lyapunov functions argument}} We conclude these notes by discussing an alternative argument to deal with the lack of (PS) which is exhibited by $\SSS_{\kappa}$ when $\kappa<c_u(L)$. It allows to prove that the set of energy levels $\kappa$ such that the Euler-Lagrange flow has a periodic orbit of energy $\kappa$ is {\em dense} in $(\min E,c_u(L))$, a weaker statement than Theorem \ref{ae}. However, it has some advantages, which are discussed in Remark \ref{adv} below. This argument is used, in a different context, in \cite{ama08}. Here we shall use it in order to prove the following weaker version of Theorem \ref{ae}:

\begin{Thm}
\label{dense}
Let $\min E<\bar \kappa<c_u(L)$ and assume that there are no contractible periodic orbits of energy $\bar\kappa$ and non-negative action $\SSS_{\bar\kappa}$.
Then there exists a strictly decreasing sequence $(\kappa_h)$ which converges to $\bar\kappa$ and is such that the Euler-Lagrange flow has a contractible periodic orbit $\gamma_h$ with energy $\kappa_h$ and period $T_h$, which satisfies $\SSS_{\kappa_h}(\gamma_h)/T_h \downarrow 0$.
\end{Thm}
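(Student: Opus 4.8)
The plan is to run a deformation driven by the negative gradient flow of a slightly perturbed action functional $\SSS_{\bar\kappa+\lambda}$, $\lambda>0$ small, controlled by the \emph{two} Lyapunov functions $\SSS_{\bar\kappa+\lambda}$ and $\SSS_{\bar\kappa}$, and then to convert the resulting almost‑critical points into genuine periodic orbits. First I would shrink the energy interval $I$ of Section~\ref{pole} to a right‑neighbourhood of $\bar\kappa$ and keep the minimax class $\Gamma=\Gamma(\kappa)$ of that section (built from $\mathcal{Z}_0$ if $\bar\kappa>e_0(L)$, from $\mathcal{Z}_M$ in general), together with the two structural facts that will be used repeatedly: $\Gamma(\kappa')\subseteq\Gamma(\kappa)$ whenever $\kappa'\ge\kappa$, since $\SSS_{\kappa'}\ge\SSS_{\kappa}$; and $\Gamma(\kappa)$ is positively invariant under a negative gradient flow of $\SSS_{\kappa}$ truncated below the level $c(\bar\kappa)/4$, exactly as in the proof of Lemma~\ref{strarg}. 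Here $c(\kappa)=\inf_{K\in\Gamma(\kappa)}\max_{K}\SSS_{\kappa}$ is finite and positive by Lemmas~\ref{Lem:mountain pass}, \ref{notempty}, \ref{mtpass2} and weakly increasing in $\kappa$.

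Next I would record the two inputs. From the hypothesis, $\SSS_{\bar\kappa}$ has no critical point in $\MM^{\mathrm{contr}}$ at a non‑negative level; combined with Lemmas~\ref{Lem;2} and~\ref{PST0} this forces every $(\PS)$ sequence of $\SSS_{\bar\kappa}$ in $\MM^{\mathrm{contr}}$ at a positive level to have periods tending to $+\infty$ (otherwise a subsequence would stay in a bounded‑period region, where $(\PS)$ holds, and converge to an excluded critical point). For the Lyapunov computation, write $\SSS_{\bar\kappa+\lambda}(x,T)=\SSS_{\bar\kappa}(x,T)+\lambda T$; since the $(0,+\infty)$‑factor of $\MM$ carries the Euclidean metric, $\nabla\SSS_{\bar\kappa+\lambda}=\nabla\SSS_{\bar\kappa}+\lambda\,e_T$ with $e_T$ the unit vector in the period direction, so along the negative gradient flow of $\SSS_{\bar\kappa+\lambda}$ the functional $\SSS_{\bar\kappa+\lambda}$ strictly decreases while
\[
\frac{d}{d\sigma}\,\SSS_{\bar\kappa}=-\|\nabla\SSS_{\bar\kappa+\lambda}\|^2+\lambda\,\frac{\partial\SSS_{\bar\kappa+\lambda}}{\partial T}\ \le\ -\|\nabla\SSS_{\bar\kappa+\lambda}\|\bigl(\|\nabla\SSS_{\bar\kappa+\lambda}\|-\lambda\bigr),
\]
so $\SSS_{\bar\kappa}$ decreases at a definite rate, of order $\lambda^2$ up to the (bounded) truncation factor, at every point where $\|\nabla\SSS_{\bar\kappa+\lambda}\|\ge 2\lambda$. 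Thus $\SSS_{\bar\kappa+\lambda}$ and $\SSS_{\bar\kappa}$ are the two Lyapunov functions, the second one decreasing away from a neighbourhood of $\Crit\SSS_{\bar\kappa+\lambda}$.

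Then I would carry out the deformation. Fix $\lambda>0$ small, choose $K\in\Gamma(\bar\kappa+\lambda)$ with $\max_{K}\SSS_{\bar\kappa+\lambda}<c(\bar\kappa+\lambda)+1$, and flow $K$ by the truncated negative gradient flow $\phi^{\lambda}$ of $\SSS_{\bar\kappa+\lambda}$. By positive invariance $\phi^{\lambda}_{\sigma}(K)\in\Gamma(\bar\kappa+\lambda)\subseteq\Gamma(\bar\kappa)$, hence $\max_{\phi^{\lambda}_{\sigma}(K)}\SSS_{\bar\kappa}\ge c(\bar\kappa)$ for every $\sigma\ge0$; on the other hand, if every flow line from $K$ remained, up to a time of order $\lambda^{-2}$, inside $\{\|\nabla\SSS_{\bar\kappa+\lambda}\|\ge 2\lambda\}\cap\{\SSS_{\bar\kappa}\ge c(\bar\kappa)/2\}$, the rate estimate above would push $\SSS_{\bar\kappa}$ below $c(\bar\kappa)/2$ on the whole deformed set, a contradiction. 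So there is a point $w_\lambda$ on the image of $\phi^{\lambda}$ with $\|\nabla\SSS_{\bar\kappa+\lambda}(w_\lambda)\|<2\lambda$, $\SSS_{\bar\kappa}(w_\lambda)\ge c(\bar\kappa)/2$ and $\SSS_{\bar\kappa+\lambda}(w_\lambda)<c(\bar\kappa+\lambda)+1$; in particular $\|\nabla\SSS_{\bar\kappa}(w_\lambda)\|\le 3\lambda$. Taking $\lambda=\lambda_h\downarrow0$ yields a sequence $w_h:=w_{\lambda_h}=(x_h,T_h)$ which is a $(\PS)$ sequence for $\SSS_{\bar\kappa}$ at levels confined to a bounded subinterval of $(0,+\infty)$ and is, at the same time, a $2\lambda_h$‑almost‑critical point of $\SSS_{\bar\kappa+\lambda_h}$. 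By the previous paragraph $T_h\to+\infty$, so $\SSS_{\bar\kappa}(w_h)/T_h\to0$ and a fortiori $\SSS_{\bar\kappa+\lambda_h}(w_h)/T_h\to0$.

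The remaining and hardest step is to upgrade the almost‑critical points $w_h$ — which live precisely in the unbounded‑period regime where $(\PS)$ genuinely fails — to honest periodic orbits $\gamma_h$ of energies $\kappa_h\downarrow\bar\kappa$ with $\SSS_{\kappa_h}(\gamma_h)/T_h\to0$. My plan here is to run Struwe's monotonicity argument a second time, localized near each $w_h$: on a short energy interval ending at $\bar\kappa+\lambda_h$, with the minimax class generated by the piece of $\phi^{\lambda_h}$ that produced $w_h$ and confined, via the same pair of Lyapunov functions, to a region where the period stays comparable to $T_h$; the associated local minimax values are monotone in the energy and pinched between $c(\bar\kappa)/2$ and a fixed constant, hence are Lipschitz at almost every energy $\kappa_h$ in the interval, where Lemma~\ref{strarg} (with its period bound), followed by Lemmas~\ref{Lem;2} and~\ref{PST0}, produces a contractible periodic orbit $\gamma_h$ of energy $\kappa_h$ with bounded action and period of order $T_h$, so that $\SSS_{\kappa_h}(\gamma_h)/T_h\to0$; passing to a subsequence of the $\kappa_h$ gives the required strictly decreasing sequence. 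The main obstacle is exactly the design of this localized minimax so that the extracted orbit inherits the large period of $w_h$ rather than collapsing to a short orbit — this is where the two Lyapunov functions must be exploited once more, since the compactness one would otherwise invoke is unavailable in the regime the hypothesis forces us into; by comparison, producing the points $w_h$ and deducing $T_h\to+\infty$ from the hypothesis is the routine part of the argument.
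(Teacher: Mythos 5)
Your first three paragraphs are correct but only reproduce what is already immediate from the earlier sections: under the hypothesis of the theorem, $\SSS_{\bar\kappa}$ admits a $(\PS)_{c(\bar\kappa)}$ sequence in $\MM^{\mathrm{contr}}$ (general minimax principle with the truncated flow), and by Lemmas \ref{PST0} and \ref{Lem;2} its periods must tend to $+\infty$, since a bounded-period subsequence would converge to a contractible critical point with positive action, which you have excluded. The perturbation $\SSS_{\bar\kappa+\lambda}=\SSS_{\bar\kappa}+\lambda T$ and the estimate on $\frac{d}{d\sigma}\SSS_{\bar\kappa}$ along $-\nabla\SSS_{\bar\kappa+\lambda}$ are fine, but they do not get you past this point.

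The genuine gap is the final step, which you yourself flag as the hardest: upgrading the almost-critical points $w_h$, whose periods diverge, to honest periodic orbits with energies $\kappa_h\downarrow\bar\kappa$ and mean action tending to $0$. The localized Struwe argument you sketch cannot work as described, for two reasons. First, confining the localized minimax class ``to a region where the period stays comparable to $T_h$'' via ``the same pair of Lyapunov functions'' is circular: in the paper the second Lyapunov function $\SSS_{\kappa^*}$ exists only because of Lemma \ref{oppo}, i.e.\ because $d\SSS_{\bar\kappa}$ and $d\SSS_{\kappa^*}$ are never anti-parallel on the region $A$, and that fact is deduced from the \emph{negation} of the conclusion (the a priori dichotomy that every periodic orbit of energy $\kappa\in[\bar\kappa,\tilde\kappa]$ satisfies $\SSS_{\kappa}(\gamma)/T\geq\delta$ or $\SSS_{\kappa}(\gamma)\leq 0$), which your direct argument never assumes. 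Without such an assumption there is no common pseudo-gradient and no positively invariant region of controlled period. Second, even granting some localized minimax, Struwe's mechanism (Lemma \ref{strarg}, and Theorem \ref{ae}) yields orbits whose period is bounded \emph{above} by the local Lipschitz constant of the minimax value and whose action is bounded \emph{below} by the minimax level; nothing there forces the period to be large or the ratio action over period to be small, which is exactly the quantitative content of the theorem. The paper's proof sidesteps both problems by arguing by contradiction from the start: the negated conclusion gives the uniform bound $T\leq T^*=2a/\delta$ for critical points of the interpolated functionals with action in $(0,a]$, hence Lemma \ref{oppo}, hence the vector field $W$ of Lemma \ref{pg} with $\SSS_{\kappa^*}$ as second Lyapunov function; this bounds the periods on $\phi_{\R^+}(K)\cap\{\SSS_{\bar\kappa}>0\}$, restores compactness, and produces a critical point of $\SSS_{\bar\kappa}$ with positive action, contradicting the hypothesis. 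You should restructure your argument around this contradiction rather than attempting a direct construction.
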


\begin{proof}
We argue by contradiction and  assume that there exists $\tilde\kappa>\bar\kappa$ and $\delta>0$ such that for any $\kappa \in[\bar \kappa,\tilde \kappa]$ all the periodic orbits $\gamma$ of
energy $\kappa$ and period $T$ satisfy either $\SSS_{\kappa}(\gamma)/T\geq\delta$ or $\SSS_{\kappa} (\gamma)\leq 0$. Fix real numbers $a>c(\bar \kappa)$ and  $\kappa^*\in(\bar \kappa,\tilde \kappa]$. Assume that we can find $\lambda\in[0,1]$ and $(x,T)\in\MM$ such that
\[
\lambda \, d\SSS_{\bar \kappa}(x,T)+(1-\lambda)\, d\SSS_{\kappa^*}(x,T)=0, \quad 0<\SSS_{\bar \kappa}(x,T)\leq a.
\]
Then $(x,T)$ is a critical point of $\SSS_{\lambda\bar \kappa+(1-\lambda)\kappa^*}$, hence it is a $T$-periodic orbit with energy $\lambda\bar \kappa+(1-\lambda)\kappa^*$. By what we have assumed at the beginning, we have
\[
\delta \leq \frac{1}{T}\SSS_{\lambda\bar \kappa+(1-\lambda)\kappa^*}(x,T)
= \frac{1}{T} \SSS_{\bar \kappa}(x,T)+(1-\lambda)(\kappa^*-\bar \kappa)
\leq \frac{a}{T} + \kappa^*-\bar\kappa.
\]
Up to the choice of a smaller $\kappa^*>\bar\kappa$, we may assume that $\kappa^*-\bar \kappa\leq \delta/2$. Then the above estimate implies that
\[
T\leq \frac{2a}{\delta} =: T^*.
\]
With these choices of $\kappa^*$ and $T^*$, we can restate what we have proved so far as:

\begin{Lemma}
\label{oppo}
If $T>T^*$ and $0<\SSS_{\bar \kappa}(x,T)\leq a$, then the segment
\[
\mathrm{conv} \bigl\{ d\SSS_{\bar \kappa}(x,T), d\SSS_{\kappa^*}(x,T) \bigr\} \subset T_{(x,T)}^* \MM
\]
does not contain $0$.
\end{Lemma}

The above lemma allows us to construct a negative pseudo-gradient vector field for $\SSS_{\bar\kappa}$ which has all the good properties of $-\nabla \SSS_{\bar\kappa}$ and moreover has $\SSS_{\kappa^*}$ as a Lyapunov function on the open set
\[
A:= \bigl\{T>T^*\} \cap \{0< \SSS_{\bar\kappa} < a \bigr\}.
\]
In fact, the only obstruction to finding a vector field $W$ whose flow make both $\SSS_{\bar\kappa}$ and $\SSS_{\kappa^*}$ decrease in $A$, is that the differentials of $\SSS_{\bar\kappa}$ and $\SSS_{\kappa^*}$ point in opposite directions in some point of $A$, and this is precisely what is excluded by Lemma \ref{oppo}.
More precisely, one can prove the following:

\begin{Lemma}
\label{pg}
There exists a locally Lipschitz vector field $W$ on $\MM$ such that:
\begin{enumerate}
\item $d\SSS_{\bar\kappa}[W]<0$ on $\{\SSS_{\bar\kappa}>0\}$;
\item $W$ is forward complete and vanishes on $\{\SSS_{\bar\kappa}\leq 0\}$;
\item let $z_h=(x_h,T_h)$ be a sequence in $\MM^{\mathrm{contr}}$ such that
\[
0< \inf \SSS_{\bar{\kappa}}(z_h) \leq \sup \SSS_{\bar{\kappa}}(z_h) < +\infty, \quad \lim_{h\rightarrow \infty} d\SSS_{\bar\kappa}(z_h)[W(z_h)] = 0,
\]
and $(T_h)$ is bounded from above; then $(z_h)$ has a subsequence which converges in $\MM^{\mathrm{contr}}$;
\item $d\SSS_{\kappa^*}[W]<0$ on $A$.
\end{enumerate}
\end{Lemma}

In fact, one can choose $W$ to be given by the vector field
\[
\nabla\SSS_{\bar \kappa}+\chi \frac{\|\nabla\SSS_{\bar \kappa}\|}{\|\nabla\SSS_{\kappa^*}\|}\nabla\SSS_{\kappa^*}
\]
multiplied by a suitable non-positive function. Here $\chi$ is a suitable cut-off function. See \cite[Lemmas 5.1 and 5.4]{ama08} for a similar construction.

We can now prove Theorem \ref{dense}.
By the definition of $c(\bar\kappa)$, there is a set $K$ in $\Gamma$ such that
\[
\max_{K} \SSS_{\bar \kappa} <a.
\]
By Lemma \ref{pg} (i) and (ii), for every $\sigma_0>0$ we have
\[
\inf_{\sigma\in \sigma_0} \Bigl| d\SSS_{\bar\kappa}\bigl(\phi_{\sigma}(z)\bigr)\bigl[W(\phi_{\sigma}(z))\bigr] \Bigr| \leq \frac{1}{\sigma_0} \int_0^{\sigma_0} \Bigl| d\SSS_{\bar\kappa}\bigl(\phi_{\sigma}(z)\bigr)\bigl[W(\phi_{\sigma}(z))\bigr]\Bigr| \, d\sigma = \frac{
\SSS_{\bar\kappa}(z) - \SSS_{\bar\kappa}(\phi_{\sigma_0}(z))}{\sigma_0},
\]
and, by the definition of $c(\bar\kappa)$,
\[
\max_{z\in K} \SSS_{\bar\kappa} \bigl( \phi_{\sigma_0}(z) \bigr) \geq c(\bar\kappa).
\]
By taking a limit for $\sigma_0\rightarrow +\infty$, thanks to Lemma \ref{pg} (ii), the above facts imply that $\phi_{\R^+}(K)\cap \{\SSS_{\bar\kappa}>0\}$ contains a sequence $z_h=(x_h,T_h)$ such that
\[
0< c(\bar\kappa) \leq \inf \SSS_{\bar{\kappa}}(z_h) \leq \sup \SSS_{\bar{\kappa}}(z_h) < a \quad \mbox{and} \quad \lim_{h\rightarrow \infty} d\SSS_{\bar\kappa}(z_h)[W(z_h)] = 0.
\]
It is enough to show that $(T_h)$ is bounded from above: Indeed, in this case Lemma \ref{pg} (iii) implies that $(z_h)$ has a limiting point, which is a critical point of $\SSS_{\bar\kappa}$ with positive action, contradicting the hypothesis of Theorem \ref{dense}.

The upper bound on $(T_h)$ is a consequence of the following claim: the period $T$ is bounded on the set $\phi_{\R^+}(K)\cap \{\SSS_{\bar\kappa}>0\}$.
In order to prove this claim, we first notice that
\begin{equation}
\label{hhh}
\SSS_{\bar \kappa}(x,T)\leq a, \;\; T\leq T^* \quad \Rightarrow \quad
\SSS_{\kappa^*}(x,T)\leq a+(\kappa^*-\bar \kappa)T^*=:b.
\end{equation}
Since $K$ is compact, we can find $d>b$ such that $K\subset \{\SSS_{\kappa^*} < d\}$. Let $\phi$ be the flow of the vector field $W$. We claim that
\begin{equation}
\label{cc}
\phi_{\R^+}(K) \cap \bigl\{\SSS_{\bar\kappa} > 0\bigr\} \subset \bigl\{\SSS_{\kappa^*} < d\bigr\}.
\end{equation}
In fact, let $z\in K$ and let $\sigma_0>0$ be the first instant such that $\SSS_{\kappa^*}(\phi_{\sigma_0}(z))=d$, while $\SSS_{\bar{\kappa}}(\phi_{\sigma_0}(z))> 0$. By Lemma \ref{pg} (i), $\SSS_{\bar\kappa}(\phi_{\sigma_0}(z)) \leq \SSS_{\bar\kappa}(z) < a$. By Lemma \ref{pg} (iv), the point $\phi_{\sigma_0}(z)$ cannot belong to $A$, so $\phi_{\sigma_0}(z)=(x,T)$ with $T\leq T^*$ and (\ref{hhh}) implies that $\SSS_{\kappa^*}(\phi_{\sigma_0}(z)) \leq b < d$. This contradiction proves (\ref{cc}).

If $\SSS_{\bar\kappa}(x,T)>0$ and $\SSS_{\kappa^*}(x,T)<d$, then
\[
d >  \SSS_{\kappa^*}(x,T) = \SSS_{\bar\kappa}(x,T) + (\kappa^*-\bar\kappa) T > (\kappa^*-\bar\kappa) T.
\]
This shows that the period $T$ is bounded on the set
\[
\bigl\{\SSS_{\bar\kappa} > 0\bigr\} \cap \bigl\{\SSS_{\kappa^*} < d\bigr\},
\]
and by (\ref{cc}) it is bounded also on
\[
\phi_{\R^+}(K) \cap \bigl\{\SSS_{\bar\kappa}>0\},
\]
as claimed.
\end{proof}

\begin{Rmk}
\label{adv}
In the Struwe monotonicity argument, one gets the existence of bounded (PS) sequences at level $c(\bar{\kappa})$, but has no control on the (PS) sequences at other levels. Therefore, it is not clear whether the space of negative gradient flow lines for $\SSS_{\bar\kappa}$ which connect two given critical points - say with positive action - is bounded.
An advantage of the two Lyapunov functions argument, is that the latter fact is true for the flow lines of the vector field $W$ constructed in Lemma \ref{pg}: the second Lyapunov function $\SSS_{\kappa^*}$ allows to exclude the existence of flow lines which go arbitrarily far and come back. This fact would allow to develop some global critical point theory for $\SSS_{\bar\kappa}$, such as Morse theory or Lusternik-Schnirelmann theory. This is not useful here, because the a priori estimates which lead to the existence of the pseudo-gradient vector field $W$ come from a contradiction argument. However, it might be useful in situations where these a priori bounds have a different origin, such as for example in the case of tame energy levels (see \cite{cfp10} for the definition of tameness and for motivating examples).
\end{Rmk}


\providecommand{\bysame}{\leavevmode\hbox to3em{\hrulefill}\thinspace}
\providecommand{\MR}{\relax\ifhmode\unskip\space\fi MR }
\providecommand{\MRhref}[2]{%
  \href{http://www.ams.org/mathscinet-getitem?mr=#1}{#2}
}
\providecommand{\href}[2]{#2}

\end{document}